\setlist[enumerate,1]{label=\textup{(\arabic*)}}
\newcommand*{\MRref}[2]{ \href{http://www.ams.org/mathscinet-getitem?mr=#1}{MR \textbf{#1}}}
\renewcommand*{\PrintDOI}[1]{\href{http://dx.doi.org/\detokenize{#1}}{doi: \detokenize{#1}}}
\numberwithin{equation}{section}
\theoremstyle{plain}
\newtheorem{thm}[equation]{Theorem}
\newtheorem{cor}[equation]{Corollary}
\newtheorem{lem}[equation]{Lemma}
\newtheorem{prop}[equation]{Proposition}
\theoremstyle{definition}
\newtheorem{defn}[equation]{Definition}
\newtheorem{note}[equation]{Notation}
\theoremstyle{remark}
\newtheorem{rem}[equation]{Remark}
\newtheorem{example}[equation]{Example}
\newcommand{\ZZ}{\mathbb{Z}}
\newcommand{\RR}{\mathbb{R}}
\newcommand{\QQ}{\mathbb{Q}}
\newcommand{\NN}{\mathbb{N}}
\newcommand{\mx}{\mathcal{X}}
\def\X{\mx}
\newcommand{\my}{\mathcal{Y}}
\def\Y{\my}
\newcommand{\me}{\mathcal{E}}
\def\E{\me}
\newcommand*{\nb}{\nobreakdash}
\newcommand*{\Star}{\(^*\)\nobreakdash-}
\newcommand{\Cst}{\mathrm{C}^*}
\newcommand{\idealin}{\mathrel{\triangleleft}} 
\newcommand*{\Bound}{\mathbb B}
\newcommand*{\Comp}{\mathbb K}
\newcommand{\Toepr}{\mathcal{T}_\lambda}
\newcommand{\Toepu}{\mathcal{T}_u}
\newcommand{\id}{\mathrm{id}}
\newcommand*\xbar[1]{%
   \hbox{%
     \vbox{%
       \hrule height 0.5pt 
       \kern0.5ex
       \hbox{%
         \kern-0.1em
         \ensuremath{#1}%
         \kern-0.1em
       }%
     }%
   }%
} 
\DeclarePairedDelimiterX{\braket}[2]{\langle}{\rangle}{#1\,\delimsize\vert\,\mathopen{}#2}
\DeclarePairedDelimiterX{\BRAKET}[2]{\langle}{\rangle}{\!\delimsize\langle#1\,\delimsize\vert\,\mathopen{}#2\delimsize\rangle\!}
\DeclarePairedDelimiterX{\setgiven}[2]{\{}{\}}{#1\,{:}\,\mathopen{}#2}
\newcommand{\thmref}[1]{Theorem~\ref{#1}}
\newcommand{\secref}[1]{Section~\ref{#1}}
\newcommand{\proref}[1]{Proposition~\ref{#1}}
\newcommand{\lemref}[1]{Lemma~\ref{#1}}
\newcommand{\corref}[1]{Corollary~\ref{#1}}
\newcommand{\remref}[1]{Remark~\ref{#1}}
\newcommand{\defref}[1]{Definition~\ref{#1}}
\def\W{\mathcal W}
\def\inv{^{-1}}
\def\iqs{Q}
\def\J{\mathfrak J}
\def\mc{\mathcal C}
\def\fock{\phi}
\def\mi{\mathcal I}
\def\tmi{\tilde\mi}
\def\tma{\tilde\ma}
\def\lsp{\operatorname{span}}
\def\clsp{\operatorname{\overline {span}}}
\def\Fl{\Cst_{\mathrm{rep}}(\E)}
\def\ma{\mathfrak A}
\newcommand{\matr}[3]{\begin{pmatrix}
#1 & #2 \\ 0 & #3
\end{pmatrix}
}
\begin{document}
\title[On Toeplitz algebras of product systems]{On Toeplitz algebras of product systems}

\author{Elias G. Katsoulis}

\address{Department of Mathematics, East Carolina University, Greenville NC 27858-4353 USA}
\thanks{}

\email{KATSOULISE@ecu.edu}
\author{Marcelo Laca}

\address{Department of Mathematics and Statistics, Univ. of Victoria, BC  Canada V8W 2Y2}
\thanks{}
\email{laca@uvic.ca}

\author{Camila F. Sehnem}

\address{Department of Pure Mathematics, Univ. of Waterloo, Waterloo ON Canada N2L 3G1}
\thanks{}

\email{camila.sehnem@uwaterloo.ca}
\date{\today}
\keywords{Toeplitz algebra; product systems, nuclearity.}
\dedicatory{To the memory of Iain Raeburn}
\begin{abstract} 
In the setting of product systems over group-embeddable monoids, we consider nuclearity of the associated Toeplitz C*-algebra in relation to nuclearity of the coefficient algebra. Our work goes beyond the known cases of single correspondences and compactly aligned product systems over right LCM monoids. Specifically, given a product system over a submonoid of a group, we show, under technical assumptions, that the fixed-point algebra of the gauge action is nuclear iff the coefficient algebra is nuclear; when the group is amenable, we conclude that this happens iff the Toeplitz algebra itself is nuclear.  Our main results imply that nuclearity of the Toeplitz algebra is equivalent to nuclearity of the coefficient algebra for every full product system of Hilbert bimodules over abelian monoids,  over $ax+b$-monoids of integral domains and over Baumslag--Solitar monoids $BS^+(m,n)$ that admit an amenable embedding,  which we provide for $m$ and $n$ relatively prime.

\end{abstract}
\maketitle

\section{Introduction}
C*-algebras associated to correspondences have been actively studied ever since they were introduced by Pimsner \cite{Pimsner:Generalizing_Cuntz-Krieger}. Originally promoted as simultaneous generalizations of Cuntz--Krieger algebras and crossed products by $\ZZ$, Pimsner algebras have found many other applications and stimulated a lot of research over the past three decades. The construction uses the tensor powers of  a correspondence over a C*-algebra of coefficients to produce a C*-algebra of left creation operators (the Toeplitz--Pimsner algebra) and a distinguished quotient of it, (the Cuntz--Pimsner algebra). Pimsner's approach also relies on viewing  these algebras as universal objects for a class of representations of the correspondence. For the details, in addition to the original reference, see  also \cites{Fow-Rae1999,Katsura2004}. The construction appeared independently in \cite{AEE} under the further assumption that the correspondence is a Hilbert bimodule. 

 Partly guided by Arveson's theory of product systems of Hilbert spaces, Fowler \cite{Fowler:Product_systems} 
 proposed a generalization of Pimsner's construction for generalized systems of tensor products of correspondences over semigroups ({\em product systems}, for short). The definition of a reduced Toeplitz algebra generated by left creation operators on the  Fock module of a product system is entirely analogous to the case of a single correspondence, for which the relevant semigroup is $\NN$. But
complications arose when trying to identify a convenient class of representations of the product system that was useful to study the reduced Toeplitz algebra from a universal C*-algebra point of view. This was the case even for semigroup C*-algebras, which arise from product systems of one-dimensional correspondences; essentially, the reason is that some crucial properties of representations are automatic for $\NN$ but need to be specified explicitly in general to keep things tractable.

With the theory of semigroup C*-algebras available at the time, the construction of a universal Toeplitz algebra associated to a product system could only proceed under further assumptions, broadly, that the semigroup arises from a quasi-lattice order in the sense of Nica and that the product system is compactly aligned. It was only under these assumptions, or slight generalizations thereof, that a workable theory was developed in analogy to Pimsner's.

The past 15 years have seen significant advances in the understanding of semigroup C*-algebras beyond the quasi-lattice or right LCM conditions. This has been largely the result of seminal work of Xin Li~\cite{Li:Semigroup_amenability}, which takes into account the structure of constructible right ideals of the semigroup. Naturally, these advances are now beginning to permeate the study of C*-algebras of general product systems, notably through the techniques developed in \cites{SEHNEM2019558}. This provides the context for the present work, in which  we study product systems of correspondences over monoids that embed in a group.
  
  In this paper we are primarily interested in the structure of the (reduced) Toeplitz algebra $\Toepr(\E)$ of a product system $\E$ over a general group-embeddable monoid~$P$. Our main goal is to study nuclearity and exactness of $\Toepr(\E)$ in terms of the underlying coefficient algebra beyond the case of compactly aligned product systems over right LCM monoids. For compactly aligned product systems over right LCM monoids, an analysis of these structural properties is carried out in~\cite{KKLL2023}. For single correspondences, nuclearity (resp. exactness) of the Toeplitz algebra was shown to be equivalent to nuclearity (resp. exactness) of the coefficient algebra in work of Katsura~\cite{Katsura2004} (see also \cite{DS2001}). For similar structural results at the level of Cuntz--Pimsner-type C*-algebras of product systems over Ore monoids we mention work of Albandik and Meyer \cite{albandik-meyer}, and pioneering work of Murphy on C*-algebras associated to abelian semigroups of endomorphisms~\cite{Murphy1996IEOT}.

  This paper is organized as follows. We begin \secref{sec:covariance4productsystems} with a very brief introduction to Toeplitz C*-algebras of product systems. We then give the definition of Fock covariant representations and of a universal Toeplitz algebra, and discuss a filtration of C*-algebras indexed by the collection of finite families of constructible ideals that are closed under intersection. In \secref{sec:tensorproducts} we consider tensor products of product systems. Our motivation is a result in the setting of Toeplitz algebras of correspondences that establishes the existence of canonical isomorhisms $\Toepr(\E)\otimes_{\mu}B\cong\Toepr(\E\otimes_{\mu} B)$ where $\mu$ denotes either the minimal or maximal norms.  We show that an isomorphism $\Toepr(\E)\otimes_{\min}B\cong\Toepr(\E\otimes_{\min} B)$ always exists, and discuss what happens with the maximal norm in relation to universal Toeplitz algebras.

  The independence condition for monoids introduced by Li~\cite{Li:Semigroup_amenability} is relevant in our work and we review it in \secref{sec:independenceandps}. We discuss related properties in the setting of product systems that allow us to give a simplified characterization of Fock covariant representations. When $P$ embeds in a group $G$ we analyze the fixed point algebra $\Toepr(\E)^G$ for the canonical gauge coaction of~$G$ in a similar fashion as for Toeplitz algebras of single correspondences in work of Katsura \cite{Katsura2004} and of compactly aligned product systems in work of Fowler~\cite{Fowler:Product_systems}. We give sufficient conditions for nuclearity of the fixed point algebra $\Toepr(\E)^G$ for the gauge coaction in terms of subalgebras associated to constructible right ideals of~$P$. In many cases these conditions are also necessary, \thmref{thm:building-subalgebras}. In \thmref{thm:building-subalgebras-exactness} we give sufficient conditions for exactness of $\Toepr(\E)^G$.

\secref{sec:main-results} contains our main results. We show that if $\E=(\E_p)_{p\in P}$ is a product system with coefficient algebra~$A$ for which the left action on $\E_p$ contains the compact operators for each $p\in P$, then $\Toepr(\E)^G$ is nuclear if and only if $A$ is nuclear provided that~$\Toepr(\E)^G$ is spanned in a certain precise sense by elements associated to symmetric words in~$P$, see~\thmref{thm:Toep-vs-coefficient1}. The main idea is that these conditions imply that each of the building blocks of $\Toepr(\E)^G$ corresponding to a constructible right ideal of $P$ is a quotient of an ideal in~$A$. The analogous statement about exactness is proved in \thmref{thm:Toep-vs-coefficient-exactness}. We observe that the left action on~$\E_p$ contains the compact operators for all $p\in P$ whenever $\E=(\E_p)_{p\in P}$ is a product system of Hilbert bimodules. 

In \secref{sec:examples} we give several examples of monoids for which every full product system such that the left action contains the compact operators automatically satisfies the conditions in our main theorem. These include abelian monoids, $ax+b$\nb-monoids of integral domains and Baumslag--Solitar monoids $BS^+(m,n)$.
In the case  $\gcd(m,n) =1$ we give an embedding of $BS^+(m,n)$ into an amenable group; it is possible that this is known but we have found no reference for it. In any case, it is of  independent interest.

\subsection*{Acknowledgements} Part of this work was carried out in person around the International Workshop in Operator Theory and its Applications IWOTA 2022; we are thankful for the hospitality extended to us by Marek Ptak at Uniwersytet Rolinczy. Elias Katsoulis was partially supported by NSF Grant 2054781 and Marcelo Laca was supported by NSERC Discovery Grant RGPIN-2023-05410. At the later stages we received the preprint \cite{KaPa2024} which also deals with Fock covariance for product systems; the overlap is limited because the main concern there is with Hao--Ng type theorems, while the main results here are about nuclearity and exactness.

\section{Product systems and Fock covariance}\label{sec:covariance4productsystems}

\subsection{Notation and basic notions} Let $A$ and $B$ be $\Cst$\nb-algebras. A \emph{correspondence} $\E\colon A\leadsto B$ is a Hilbert $B$\nb-module~$\E$ with a nondegenerate left action of~$A$ implemented by a \Star homomorphism $\fock\colon A\to\Bound(\E)$. We say that $\E$ is a \emph{Hilbert $A,B$\nb-bimodule} if the left action of~$A$ comes from a left Hilbert $A$\nb-module structure $\BRAKET{\cdot}{\cdot}_A$ on~$\E$ that is compatible with $\braket{\cdot}{\cdot}_B$. That is, $\BRAKET{\xi}{\eta}\zeta=\xi\braket{\eta}{\zeta}$ for all $\xi,\eta,\zeta\in\E$. We say that $\E$ is \emph{full} if $\braket{\E}{\E}=A$.

 Let~$P$ be a semigroup with identity~$e$. A \emph{product system} over~$P$ of $A$\nb-correspondences consists of:
\begin{enumerate}
\item[(i)] a correspondence $\E_p\colon A\leadsto A$ for each~$p\in P$, where $\E_e=A$  is the identity correspondence over~$A$;
\item[(ii)] correspondence isomorphisms $\mu_{p,q}\colon\E_p\otimes_A\E_q\overset{\cong}{\rightarrow}\E_{pq}$, also called \emph{multiplication maps}, for all $p,q\in P$; $\mu_{e,p}$ on $A\otimes_A\E_p$ will be left multiplication $\varphi_p\colon A\to\Bound(\E_p)$, while $\mu_{p,e}$ will be the right action of~$A$ on~$\E_p$
\end{enumerate}

 The multiplication maps are associative, that is, the following diagram commutes for all $p, q, r\in P$:
  \[
  \xymatrix{
    (\E_p\otimes_A\E_q)\otimes_A\E_r  \ar@{->}[d]^{\mu_{p,q}\otimes1}
   \ar@{<->}[rr]& &     \E_p\otimes_A(\E_q\otimes_A\E_r)
   \ar@{->}[rr]^{1\otimes\mu_{q,r}}&&
    \E_p\otimes_A\E_{qr} \ar@{->}[d]^{\mu_{p,qr}} \\
    \E_{pq}\otimes_A\E_r   \ar@{->}[rrrr]^{\mu_{pq,r}}&& &&
    \E_{pqr}  
.  }
  \]
  
We say that $\E$ is \emph{full} if $\E_p$ is full for all $p\in P$. If each~$\E_p$ is a Hilbert $A$\nb-bimodule, we speak of a \emph{product system of Hilbert bimodules}.

 A \emph{representation} of a product system~$\E=(\E_p)_{p\in P}$ in a $\Cst$\nb-algebra~$B$ consists of linear maps~$\psi_p\colon\E_p\rightarrow B$, for all~$p\in P\setminus\{e\},$ and a \Star homomorphism $\psi_e\colon A\rightarrow B$, satisfying 
  \begin{enumerate}
  \item[(R1)] $\psi_p(\xi)\psi_q(\eta)=\psi_{pq}(\xi\eta)$ for all $p,q\in P$, $\xi\in\E_p$ and  $\eta\in\E_q$;
  \item[(R2)] $\psi_p(\xi)^*\psi_p(\eta)=\psi_e(\braket{\xi}{\eta})$ for all $p\in P$ and $\xi, \eta\in\E_p$.  
  \end{enumerate}
    
 If $\psi_e$ is faithful, we say that~$\psi$ is \emph{injective}. In this case, the relation (R2) implies that~$\|\psi_p(\xi)\|=\|\xi\|$ for all~$\xi\in\E_p$ and~$p\in P$.

\bigskip
The {\em reduced Toeplitz algebra of $\E$} is defined as follows. Let $\E^+ = \bigoplus_p \E_p$ be the Fock correspondence, and let $\fock$
   be the Fock representation of $\E$ on $\E^+$ given by  creation operators $\fock_p(\xi)$ for $\xi \in \E_p$ defined 
  on fibers by  $\fock_p(\xi) \eta = \xi\otimes \eta$. In particular $\fock_e$ implements left multiplication by~$A$. These operators satisfy (R1) and (R2) hence give a representation of~$\E$, see \cite[p. 340]{Fowler:Product_systems}.

 \bigskip
 We will  need to consider the universal C*-algebra $\Fl$ of representations of $\E$ satisfying (R1) and (R2). This   was introduced by Fowler 
 in \cite{Fowler:Product_systems}, who referred to it as the Toeplitz algebra of $\E$. We believe that a Toeplitz algebra ought to reflect further
 properties of the reduced Toeplitz C*-algebra $\Toepr(\E)$. Thus, we will refer to $\Fl$ simply as the universal C*-algebra of representations of 
 $\E$, and reserve the term Toeplitz for a C*-algebra whose representations satisfy a further set of relations derived from the Fock representation.
 To illustrate the issue we point out that it is generally accepted that the Toeplitz C*-algebra of the monoid $\NN^2$ is generated by two commuting isometries $S$ and $T$ that also satisfy $S^*T = TS^*$, and that omitting this last condition produces a universal C*-algebra that is much larger than the intended target \cite{MR1386163}.

\bigskip  

  \subsection{Fock covariance}  
  As in many  situations involving concrete C*-algebras of operators, it is very useful here to  have an abstract presentation of a universal object in terms of generators and relations that single out a class of representations. The concrete C*-algebra is then a canonical image of the universal one, and faithfulness depends on a type of amenability.
  In the case of 
   compactly aligned product systems over quasi-lattice ordered semigroups this was achieved in \cite{Fowler:Product_systems} by considering  Nica--Toepitz covariant representations. Our next aim is to come up with such a presentation  suitable for $\Toepr(\E)$ when $\E$ is a product system over a group-embeddable monoid. The idea is to add more relations to the presentation (R1)-(R2) of $\Fl$.
 
 Recall from \cite[Proposition~4.1]{DKKLL2022} that if $P$ embeds in a group~$G$, then the reduced Toeplitz algebra $\Toepr(\E)$ has a canonical gauge coaction of $G$ determined by 
 \[
 \fock_p(\xi) \mapsto \fock_p(\xi) \otimes u_p, \qquad p \in P,  \  \  \xi\in \E_p
 \]
  where $u_p$ denotes the canonical generator of $C^*(G)$ associated to $p$. 
 
    \bigskip 
   
   \begin{defn}\label{def:CMcovariance}(Fock covariance)
   Let $\psi^u$ denote the universal representation of $\E$ in $\Fl$ and recall that the map $\psi^u(\xi_p) \mapsto \psi^u(\xi_p) \otimes \delta_p$ determines a natural gauge coaction of $G$ on the universal C*-algebra  $\Fl$ for representations of $\E$. 
   We will say that a representation $\psi$ of $\E$ is {\em Fock covariant}  if the restriction of the associated representation of $\Fl$ 
 to the fixed point algebra $\Fl^G$  of the gauge coaction factors through the fixed point algebra $\Toepr(\E)^G$.

   Following \cite{laca-sehnem} we define the universal Toeplitz algebra of $\E$,  denoted by $\Toepu(\E)$, to be the universal C*-algebra for Fock covariant representations. We will write $\phi^u :\E \to \Toepu(\E)$ for the universal Fock covariant representation.  Obviously the left regular representation $\fock$ itself is Fock covariant, and this gives a canonical homomorphism of $\Toepu(\E)$ onto $\Toepr(\E)$ that restricts to an isomorphism $\Toepu(\E)^G\cong\Toepr(\E)^G$.
   \end{defn}
   
   We will  give a  C*-algebraic characterization of Fock covariance in \proref{pro:equiv-subalgebras} below.  Before we can state it, we need to 
   consider certain subalgebras of $\Fl$ and of $\Toepu(\E)$ defined using constructible right ideals of $P$, in analogy to what is done for semigroup C*\nb-algebras \cite{Li:Semigroup_amenability}. It will be convenient to use the notation from \cite[Section~2]{laca-sehnem}, which we review next. For each word $\alpha = (p_1, p_2, \cdots , p_{2k-1}, p_{2k}) \in W^k$ we let $\tilde{\alpha} = (p_{2k},  p_{2k-1},  \cdots ,p_2, p_1)$ be the reverse word, and define $\dot\alpha \coloneqq p_1\inv p_2 ,  \cdots, p_{2k-1}\inv p_{2k}$;  we say that the word $\alpha$ is {\em neutral} if $\dot\alpha =e$. The iterated quotient set is the finite set
   \[
\iqs(\alpha) \coloneqq \{ e, \  p_{2k}\inv  p_{2k-1} , \,   p_{2k}\inv  p_{2k-1} p_{2k-2}\inv  p_{2k-3}, \, \ldots, \, p_{2k}\inv  p_{2k-1} p_{2k-2}\inv  p_{2k-3} \cdots p_2\inv p_1  \},
\]
and  the corresponding constructible right ideal  is
   \[
   K(\alpha) \coloneqq \bigcap_{g\in \iqs(\alpha)} gP = P \, \cap   p_{2k}\inv  p_{2k-1} P \,\cap  \, p_{2k}\inv  p_{2k-1} p_{2k-2}\inv  p_{2k-3} P \,\cap  \ldots \cap \,\dot{\tilde{\alpha}} P.
   \]
   
Denote by $\psi^u$ the universal representation of $\E$ in $\Fl$. For each neutral word $\alpha = (p_1, p_2, \cdots , p_{2k-1}, p_{2k})$ we consider products in  $\Fl$  of the form
  \[
\psi^u(\xi_{p_1})^* \psi^u(\xi_{p_2})  \cdots \psi^u(\xi_{p_{2k-1}})^* \psi^u(\xi_{p_{2k}}) 
 \]
 in which $\xi_{p_j} \in \E_{p_j}$ for each $j = 1, 2, \ldots, 2k$.
 

\begin{lem}\label{lem:single-ideals-sub} Let $P$ be a submonoid of a group $G$ and let $\E = (\E_p)_{p\in P}$ be a product system of correspondences over the $\Cst$\nb-algebra $A$.
Then for each $S\in \J$ the space 
\[
\tmi(S)\coloneqq \clsp\{\psi^u(\xi_{p_1})^* \psi^u(\xi_{p_2})  \cdots \psi^u(\xi_{p_{2k-1}})^* \psi^u(\xi_{p_{2k}}) \mid\dot\alpha = e \text{ and } K(\alpha) = S\}
\]\
 is  a C*-subalgebra of $\Fl^G$ and 
\begin{enumerate}
 \item $\phi(\tmi(\emptyset) )= (0)$ if $\emptyset\in\J$;

\smallskip\item $\tmi(R) \tmi(S) \subset \tmi(R\cap S)$ for $R,S \in \J$;
\smallskip\item $\tmi(P) =A$.
\end{enumerate}
\begin{proof}
By definition $\tmi(S)$ is a closed  subspace of $\Fl$. To see that it is  $*$-closed, 
notice that the adjoint of the spanning element $\psi^u(\xi_{p_1})^* \psi^u(\xi_{p_2})  \cdots \psi^u(\xi_{p_{2k-1}})^* \psi^u(\xi_{p_{2k}})$
corresponding to a word $\alpha = (p_1, p_2, \cdots , p_{2k-1}, p_{2k})$ satisfying $\dot\alpha =e$ and $K(\alpha) = S$
is precisely another spanning element corresponding to the word $\tilde\alpha$, which satisfies $\dot{\tilde\alpha} = \dot{\alpha}\inv = e$ and $K(\tilde\alpha)=K(\alpha)$  by e.g. \cite[Lemma 2.5(4)]{laca-sehnem}.

Suppose now $R$ and $S$ are constructible ideals in $P$ and let $\alpha =(p_i)$ and $\beta = (q_j)$ be neutral words satisfying $K(\alpha) = R$
and $K(\beta) = S$. A typical spanning element of $\tmi(R) \tmi(S)$ is a product of the form
\[
\psi^u(\xi_{p_1})^* \psi^u(\xi_{p_2})  \cdots \psi^u(\xi_{p_{2k-1}})^* \psi^u(\xi_{p_{2k}}) \psi^u(\xi_{q_1})^* \psi^u(\xi_{q_2})  \cdots \psi^u(\xi_{q_{2l-1}})^* \psi^u(\xi_{q_{2l}}),
\]
 which corresponds to the concatenation $\alpha\beta =(p_1, \ldots, q_{2l}) \in \W^{k+l}$. Since $(\alpha\beta)\dot{} = \dot\alpha \dot\beta =e$ and $K(\alpha \beta) = K(\alpha) \cap K(\beta) = R\cap S$, 
  by e.g. \cite[Lemma 2.5(4)]{laca-sehnem},  we conclude that (2) holds. Setting $R = S$ now shows 
the spanning set, and hence $\tmi(S)$ is closed under multiplication.

For part (3), let $\alpha=(p_i)$ be a neutral word with  $K(\alpha)=P$, and let 
$\xi_{p_j}\in\E_{p_j}$ for $j=1, \ldots 2k$. 
We will show by induction on $k$ that there exists $a\in A$ such that  
\begin{equation}\label{eqn:oldxidotalpha}
  \psi^u(\xi_{p_1})^* \psi^u(\xi_{p_2})  \cdots \psi^u(\xi_{p_{2k-1}})^* \psi^u(\xi_{p_{2k}})=a.  
\end{equation}

Indeed, for $k=1$ we must have $\alpha=(p,p)$ and the conclusion follows because $\psi^u$ satisfies axiom (R2) of the definition of representation of $\E$. 
Suppose the claim holds for words of length up to $k-1$ and let  $\alpha = (p_1, p_2, \cdots , p_{2k-1}, p_{2k})$.
Since $e\in K(\alpha)\subset p_{2k}^{-1}p_{2k-1} P\cap P$,  there exists $r\in P$ such that $p_{2k}=p_{2k-1}r$, and hence $\psi^u(\xi_{p_{2k-1}})^*\psi^u(\xi_{p_{2k}})= \psi^u (\eta)$ for some $\eta\in \E_r$. Using this and (R1) we can condense  the last three factors in \eqref{eqn:oldxidotalpha}, obtaining
\[
\psi^u(\xi_{p_{1}})^* \cdots \psi^u(\xi_{p_{2k-2}})   \psi^u(\xi_{p_{2k-1}})^* \psi^u(\xi_{p_{2k}})= \psi^u(\xi_{p_{1}})^* \cdots \psi^u(\xi_{p_{2k-2}}\eta) 
\]
where the right hand side has length $k-1$ and is thus in $A$  by the induction hypothesis. This completes the proof of part (3) and of the proposition.
\end{proof}
\end{lem}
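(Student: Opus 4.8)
The plan is to establish the assertions in the order in which they build on one another, with all the bookkeeping carried by two operations on words, reversal $\alpha\mapsto\tilde\alpha$ and concatenation $(\alpha,\beta)\mapsto\alpha\beta$, together with the identities $\dot{\tilde\alpha}=(\dot\alpha)\inv$, $(\alpha\beta)\dot{}=\dot\alpha\,\dot\beta$, $K(\tilde\alpha)=K(\alpha)$, and $K(\alpha\beta)=K(\alpha)\cap K(\beta)$ for neutral $\alpha,\beta$, which I would take from \cite[Section~2]{laca-sehnem}. By construction $\tmi(S)$ is a closed subspace, so I would first check it is $*$\nb-closed: the adjoint of the spanning element attached to a neutral word $\alpha$ with $K(\alpha)=S$ is the spanning element attached to $\tilde\alpha$, which is again neutral (since $(\dot\alpha)\inv=e$) and has $K(\tilde\alpha)=S$. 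Next, $\tmi(S)\subseteq\Fl^G$ because the image of such a spanning element under the gauge coaction is itself tensored with $\delta_{\dot\alpha}=\delta_e=1$. For the product rule~(2), the product of the spanning elements of $\tmi(R)$ and $\tmi(S)$ attached to neutral words $\alpha$ (with $K(\alpha)=R$) and $\beta$ (with $K(\beta)=S$) is precisely the spanning element attached to the concatenation $\alpha\beta$, which is neutral and satisfies $K(\alpha\beta)=R\cap S$; putting $R=S$ then shows the spanning set of $\tmi(S)$ is closed under multiplication, so with $*$\nb-closedness we conclude $\tmi(S)$ is a $\Cst$\nb-subalgebra of $\Fl^G$.

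For~(1) it suffices to see that the homomorphism $\Fl\to\Toepr(\E)$ induced by the Fock representation kills every spanning element of $\tmi(\emptyset)$, that is, that the alternating operator $\fock(\xi_{p_1})^*\fock(\xi_{p_2})\cdots\fock(\xi_{p_{2k-1}})^*\fock(\xi_{p_{2k}})$ vanishes on the Fock module $\E^+=\bigoplus_{r}\E_r$. Reading the successive creation and annihilation operators from right to left and recording which summand one lands in after each step, one checks that this operator maps $\E_r$ into $\E_{\dot\alpha\, r}=\E_r$ and is nonzero on $\E_r$ only when $r\in gP$ for every $g\in\iqs(\alpha)$, i.e.\ only when $r\in K(\alpha)$. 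If $K(\alpha)=\emptyset$ there is no such $r$, so the operator is $0$.

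For~(3) the inclusion $A\subseteq\tmi(P)$ holds because $\psi^u_e(a)^*\psi^u_e(b)=\psi^u_e(a^*b)$, the word $(e,e)$ is neutral with constructible ideal $P$, and $A=\clsp\{a^*b:a,b\in A\}$. For the reverse inclusion I would prove by induction on the length $k$ of a neutral word $\alpha$ with $K(\alpha)=P$ that $\psi^u(\xi_{p_1})^*\cdots\psi^u(\xi_{p_{2k}})\in\psi^u_e(A)$. For $k=1$, $\dot\alpha=e$ forces $p_1=p_2$, and (R2) gives the claim. For the inductive step, $e\in K(\alpha)\subseteq p_{2k}\inv p_{2k-1}P$ yields $r\in P$ with $p_{2k}=p_{2k-1}r$; since $\E_{p_{2k}}$ is densely spanned by products $\xi\eta$ with $\xi\in\E_{p_{2k-1}}$ and $\eta\in\E_r$, and $\psi^u$ is isometric on fibres (it is an injective representation, because the Fock representation factors through it), (R2) and (R1) give $\psi^u(\xi_{p_{2k-1}})^*\psi^u(\xi_{p_{2k}})=\psi^u_r(\eta)$ for some $\eta\in\E_r$; then (R1) absorbs this factor into $\psi^u(\xi_{p_{2k-2}})$, leaving the spanning element attached to a neutral word $\beta$ of length $k-1$ whose iterated quotient set is contained in $\iqs(\alpha)$, so $K(\beta)\supseteq K(\alpha)=P$ and the inductive hypothesis applies.

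I expect the main obstacle to be the combinatorial bookkeeping for the ideals $K(\alpha)$ and the group elements $\dot\alpha$: checking that reversal and concatenation act on neutral words as stated, that the length reduction in~(3) cannot enlarge the constructible ideal, and --- the most delicate point --- that in~(1) the set of indices $r$ for which the alternating operator does not annihilate $\E_r$ is exactly $K(\alpha)$. There is also a minor analytic point in~(3), namely that $\psi^u(\xi_{p_{2k-1}})^*\psi^u(\xi_{p_{2k}})$ genuinely lands in the norm-closed fibre $\psi^u_r(\E_r)$, which follows from the density of products $\xi\eta$ in $\E_{p_{2k}}$ together with boundedness.
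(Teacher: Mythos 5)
Your proof is correct and follows essentially the same route as the paper's: $*$-closedness via word reversal, the product rule via concatenation and $K(\alpha\beta)=K(\alpha)\cap K(\beta)$, and part (3) by the same length-reducing induction using $p_{2k}=p_{2k-1}r$ and (R2)/(R1). You also supply details the paper leaves implicit --- the Fock-space vanishing argument for part (1), the inclusion $A\subseteq\tmi(P)$, and the check that the condensed word still has constructible ideal $P$ --- all of which are correct.
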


\begin{cor} The projection of the Fock space onto the direct summand $\E_e=A$ induces conditional expectations $E_{A,u}\colon \Toepu(\E)\to A$ and $E_{A,r}\colon \Toepr(\E)\to A$ that factor through the corresponding conditional expectations onto $\Toepu(\E)^G\cong\Toepr(\E)^G$, and vanish on $\tmi(S)$ for all $S\in \J\setminus \{P\}.$
\begin{proof} This follows from \lemref{lem:single-ideals-sub}(3).
    \end{proof}
    \end{cor}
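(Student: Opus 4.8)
The plan is to realise both expectations as the compression of operators to the vacuum summand $\E_e=A$ of the Fock correspondence $\E^+$. Write $q_e\in\Bound(\E^+)$ for the orthogonal projection onto $\E_e$ and recall $\Comp(\E_e)\cong A$, $\Bound(\E_e)\cong\Mult(A)$. Compression $\Phi\colon T\mapsto q_eTq_e$ is a completely positive contraction $\Bound(\E^+)\to\Bound(\E_e)$, and the first point I would settle is that $\Phi$ maps $\Toepr(\E)$ into the copy of $A$ rather than merely into $\Mult(A)$. For this I would run an approximate-unit argument: if $(u_\lambda)$ is an approximate unit for $A$ then $(\fock_e(u_\lambda))$ is one for $\Toepr(\E)$ (the left actions being nondegenerate), and since each $\fock_e(u_\lambda)$ preserves every summand $\E_p$ it commutes with $q_e$, so compressing the convergence $\fock_e(u_\lambda)T\fock_e(u_\lambda)\to T$ exhibits $\Phi(T)$ as the norm limit of the elements $u_\lambda\Phi(T)u_\lambda$, which lie in the ideal $A\idealin\Mult(A)$; hence $\Phi(T)\in A$. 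Then $E_{A,r}\coloneqq\Phi|_{\Toepr(\E)}$ is a completely positive contraction onto $A$ that is the identity on $\fock_e(A)\cong A$, hence a conditional expectation. Since the Fock representation is Fock covariant, $\Toepu(\E)$ acts on $\E^+$ through the canonical surjection $\pi\colon\Toepu(\E)\to\Toepr(\E)$, and $\pi\circ\phi^u_e=\fock_e$; setting $E_{A,u}\coloneqq E_{A,r}\circ\pi$ and reading it into $\phi^u_e(A)$ via this identification of the copies of $A$ then gives a conditional expectation of $\Toepu(\E)$ onto $A$.

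The factorisation through the gauge fixed-point algebras is then formal. The gauge coaction of $G$ on $\Toepr(\E)$ admits a conditional expectation $E^G_r\colon\Toepr(\E)\to\Toepr(\E)^G$ onto its fixed-point algebra, characterised on the generating monomials $\fock_{p_1}(\xi_1)^{\pm1}\cdots\fock_{p_n}(\xi_n)^{\pm1}$ by keeping exactly those of trivial $G$\nb-degree. A monomial of nontrivial degree $g$ carries $\E_e$ into $\E_g\ne\E_e$ (or to $(0)$), so $\Phi$ kills it; by continuity $\Phi=\Phi\circ E^G_r$, that is, $E_{A,r}=\big(\Phi|_{\Toepr(\E)^G}\big)\circ E^G_r$. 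On the universal side $\pi$ is equivariant for the gauge coactions and restricts to the isomorphism $\Toepu(\E)^G\cong\Toepr(\E)^G$ of \defref{def:CMcovariance}; if $E^G_u$ denotes the corresponding conditional expectation onto $\Toepu(\E)^G$, then $E^G_r\circ\pi=\pi\circ E^G_u$, and combining this with $E_{A,u}=E_{A,r}\circ\pi$ shows that $E_{A,u}$ factors through $E^G_u$ as well.

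Finally, vanishing on $\tmi(S)$ for $S\in\J\setminus\{P\}$. If $\emptyset\in\J$, then $\tmi(\emptyset)$ already maps to $(0)$ in $\Toepr(\E)$ and in $\Toepu(\E)$ by \lemref{lem:single-ideals-sub}(1), so I may assume $S\ne\emptyset,P$. By linearity and continuity it suffices to evaluate $E_{A,r}$ on a spanning element $w=\fock_{p_1}(\xi_1)^*\fock_{p_2}(\xi_2)\cdots\fock_{p_{2k-1}}(\xi_{2k-1})^*\fock_{p_{2k}}(\xi_{2k})$ coming from a neutral word $\alpha$ with $K(\alpha)=S$. Since $\dot\alpha=e$, the operator $w$ carries $\E_e$ into $\E_e$, so $E_{A,r}(w)=q_ewq_e=w|_{\E_e}$, and $w|_{\E_e}$ is computed by running through the composition from the right, using that $\fock_p(\xi)$ sends $\E_m$ to $\E_{pm}$ while $\fock_p(\xi)^*$ sends $\E_m$ to $\E_{p\inv m}$ when $p\inv m\in P$ and to $(0)$ otherwise. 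The adjoints sit at the odd positions, and the one at position $2j-1$ is applied to the fibre of degree $p_{2j}p_{2j+1}\inv p_{2j+2}\cdots p_{2k-1}\inv p_{2k}$, so it annihilates unless the partial product $p_{2j-1}\inv p_{2j}p_{2j+1}\inv\cdots p_{2k-1}\inv p_{2k}$ lies in $P$; comparing with the definition of the iterated quotient set, these conditions (over $j=1,\dots,k$) amount exactly to $g\inv\in P$ for every $g\in\iqs(\alpha)$, equivalently $gP\supseteq P$ for every $g\in\iqs(\alpha)$, equivalently $K(\alpha)=\bigcap_{g\in\iqs(\alpha)}gP=P$. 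Thus $w|_{\E_e}=0$ when $K(\alpha)=S\ne P$, so $E_{A,r}$ annihilates $\tmi(S)$, and applying $\pi$ to the corresponding spanning elements gives the same for $E_{A,u}$. I expect this last step to be the only delicate one: matching non-vanishing of the vacuum compression of $w$ with the condition $K(\alpha)=P$ is precisely the combinatorial reduction performed in the proof of \lemref{lem:single-ideals-sub}(3), which is why that part of the lemma is the real input to the corollary.
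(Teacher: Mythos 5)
Your proof is correct and follows the same route the paper intends: the conditional expectations are the compressions $T\mapsto q_eTq_e$ to the vacuum summand, they factor through the gauge expectation because monomials of nonzero degree move $\E_e$ off itself, and a spanning element of $\tmi(S)$ restricted to $\E_e$ is nonzero only when $e\in K(\alpha)$, i.e.\ $K(\alpha)=P$. The paper compresses all of this into the citation of \lemref{lem:single-ideals-sub}(3); your write-up simply supplies the fibrewise bookkeeping that citation leaves implicit, and all the auxiliary steps (approximate-unit argument landing in $A$, Tomiyama, equivariance of $\pi$) check out.
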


\begin{lem}\label{lem:direct-limit} \label{pro:Cstar-sum} Let $P$ be a submonoid of a group $G$ and let $\E$ be a product system over~$P$. For each finite $\cap$-closed  subcollection $\mc$ of constructible right ideals define 
\[
\tma(\mc) \coloneqq \lsp \{b_S\in\tmi(S)\mid S\in\mc\}.
\] 
 Then 
 \begin{enumerate}
     \item   $\tma(\mc)$ is a C*-algebra (i.e. the linear span is already closed); 

         \item  if $\mc_1 \subset \mc_2$, then $\tma(\mc_1)$ is an ideal in $ \tma( \mc_2)$;
         
     \item  $\{\tma(\mc)\}_\mc$ is an increasing system of C*-subalgebras indexed by the set of  finite $\cap$-closed subcollections $\mc$ of $\J$ partially ordered by inclusion, whose union is dense in $\Fl^G$; 
 
     \item 
for each  maximal element $\bar{S}$ of $\mc$, the sequence \[
0\to \tma(\mc\setminus\{\bar{S}\} ) \to\tma(\mc)\to\tmi(\bar{S})/(\tmi(\bar{S})\cap\tma(\mc\setminus\{\bar{S}\}))\to 0
\]
is exact.
\end{enumerate}
\end{lem}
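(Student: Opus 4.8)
The plan is to establish the four assertions in sequence, using \lemref{lem:single-ideals-sub} as the main input and leveraging the combinatorics of constructible right ideals.

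First I would prove (2) and then bootstrap to (1). For (2), suppose $\mc_1\subset\mc_2$. Using \lemref{lem:single-ideals-sub}(2), for $R\in\mc_1$ and $S\in\mc_2$ we have $\tmi(R)\tmi(S)\subset\tmi(R\cap S)$; since $\mc_1$ is $\cap$-closed and $R\in\mc_1$, $S\in\mc_2$, one checks $R\cap S\in\mc_1$ (here I would use that $\mc_1$ is an \emph{order ideal}-type subcollection — more precisely, that constructible ideals contained in a member of $\mc_1$ that arise as intersections with members of $\mc_2$ stay in $\mc_1$; if this is not automatic from ``$\cap$-closed'' I would either add the hypothesis that $\mc_1$ is hereditary in $\mc_2$ or argue directly that $R\cap S\subseteq R$ forces membership via the filtration structure of the $\tmi(\cdot)$). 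Granting this, $\tma(\mc_1)\tma(\mc_2)\subset\tma(\mc_1)$ and symmetrically, so $\tma(\mc_1)$ is a two-sided $*$-closed ideal in the $*$-algebra $\tma(\mc_2)$, once we know the latter is closed. For (1), I induct on $|\mc|$. For $|\mc|=1$, $\tma(\{P\})=\tmi(P)=A$ by \lemref{lem:single-ideals-sub}(3), or more generally $\tma(\{S\})=\tmi(S)$, which is a $\Cst$-algebra by \lemref{lem:single-ideals-sub}. For the inductive step, pick a maximal $\bar S\in\mc$ (maximal for inclusion), so $\mc'\coloneqq\mc\setminus\{\bar S\}$ is still finite and $\cap$-closed, and by induction $\tma(\mc')$ is a $\Cst$-algebra; since $\mc'\subset\mc$, step (2) shows $\tma(\mc')$ is an ideal in the $*$-algebra $\tma(\mc)$. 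Then $\tma(\mc)=\tma(\mc')+\tmi(\bar S)$ as a sum of a closed ideal and a $\Cst$-subalgebra inside the ambient $\Cst$-algebra $\Fl^G$, and the standard fact that the sum of a $\Cst$-subalgebra and a closed two-sided ideal in a $\Cst$-algebra is closed gives that $\tma(\mc)$ is a $\Cst$-algebra.

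For (3), the collection of finite $\cap$-closed subcollections of $\J$ is directed under inclusion (the $\cap$-closure of the union of two such is again finite and $\cap$-closed since $\J$ is closed under intersections by Li's construction), and monotonicity $\mc_1\subset\mc_2\Rightarrow\tma(\mc_1)\subset\tma(\mc_2)$ is immediate from the definition. Density of $\bigcup_\mc\tma(\mc)$ in $\Fl^G$ follows because $\Fl^G$ is, by \cite[Section~2]{laca-sehnem} (or directly from the description of $\Fl$ via the relations (R1)--(R2) and the gauge coaction), the closed linear span of the products $\psi^u(\xi_{p_1})^*\cdots\psi^u(\xi_{p_{2k}})$ over neutral words $\alpha$, and each such product lies in $\tmi(K(\alpha))\subset\tma(\mc)$ for any $\mc$ containing $K(\alpha)$ — in particular $\tma(\{K(\alpha)\}\cup\{P\})$ after $\cap$-closing, which is finite.

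For (4), with $\bar S$ maximal in $\mc$ and $\mc'=\mc\setminus\{\bar S\}$, part (2) gives that $\tma(\mc')$ is a closed ideal in $\tma(\mc)$, so there is a short exact sequence $0\to\tma(\mc')\to\tma(\mc)\to\tma(\mc)/\tma(\mc')\to 0$. It remains to identify the quotient with $\tmi(\bar S)/(\tmi(\bar S)\cap\tma(\mc'))$. Since $\tma(\mc)=\tma(\mc')+\tmi(\bar S)$, the inclusion $\tmi(\bar S)\hookrightarrow\tma(\mc)$ induces a surjection $\tmi(\bar S)\twoheadrightarrow\tma(\mc)/\tma(\mc')$ whose kernel is exactly $\tmi(\bar S)\cap\tma(\mc')$; this is the second isomorphism theorem for $\Cst$-algebras, valid because all the pieces are closed. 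I expect the main obstacle to be the precise bookkeeping in step (2) showing that $R\cap S$ lands in the right subcollection — in other words, pinning down exactly what closure property of $\mc$ relative to $\mc'$ is needed so that $\tma(\mc')$ is genuinely an ideal (and not merely a subalgebra) of $\tma(\mc)$. Everything else is a routine combination of \lemref{lem:single-ideals-sub} with the elementary fact that ideal-plus-subalgebra is closed in a $\Cst$-algebra.
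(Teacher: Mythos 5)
Your proposal is correct and follows essentially the same route as the paper: induction on $|\mc|$ by removing a maximal element $\bar S$, the observation that $\tmi(\bar S)\tmi(S)\subseteq\tmi(\bar S\cap S)$ with $\bar S\cap S\in\mc\setminus\{\bar S\}$, the fact that a closed ideal plus a C*-subalgebra is closed, and the second isomorphism theorem for the exact sequence in (4). Your worry about step (2) is justified: for arbitrary $\cap$-closed $\mc_1\subset\mc_2$ the intersection $R\cap S$ ($R\in\mc_1$, $S\in\mc_2$) need not lie in $\mc_1$ --- already $\mc_1=\{P\}\subset\mc_2=\{P,S\}$ gives $\tma(\mc_1)=A$, which is not an ideal in $A+\tmi(S)$ --- so (2) really does require $\mc_1$ to be hereditary in $\mc_2$ in the sense you describe; the paper's own proof only establishes (and only ever uses) the case $\mc_1=\mc\setminus\{\bar S\}$ with $\bar S$ maximal, where maximality forces $\bar S\cap S\in\mc_1$, so your restriction to that case is exactly what is needed.
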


\begin{proof} It is clear that $\tma(\mc_1) \subset \tma(\mc_2) $ whenever $\mc_1 \subset \mc_2$, and that $\tma(\mc) \subset \Fl^G $.  
Since $\Fl^G$ is generated by products corresponding to neutral words, $\bigcup_\mc \tma(\mc)$ is dense in $\Fl^G$.

Let $\mc$ be a finite $\cap$-closed  collection of constructible right ideals of $P$. We will  show that $\tma(\mc) 
$ is a $\Cst$\nb-subalgebra of~$\Cst_{\mathrm{rep}}(\E)$. The proof is by induction  on~$|\mc|$. If $|\mc|=1$, then $\tma(\mc)=\tmi({S})$, which  is a $\Cst$\nb-algebra by \lemref{lem:single-ideals-sub}. Let $n\geq 1$ and suppose $\sum_{\substack{S\in \mc'}}\tmi(S)$ is a $\Cst$\nb-algebra, whenever $\mc'\subset\J$ is a finite $\cap$-closed subcollection of $\J$ with $|\mc'|=n$.  Assume $|\mc|=n+1$ and let $\bar{S}$ be a maximal element of $\mc$. Set $\mc'\coloneqq \mc\setminus\{\bar{S}\}$. Then $|\mc'|=n$ and $\mc'$ is still closed under intersection because by removing the maximal element $\bar{S}$ we have not removed any intersection of elements of $\mc'$. By the induction hypothesis, $\tma(\mc')$ is a $\Cst$\nb-algebra. Moreover, $\tma(\mc')$ is a closed ideal in~$\tma(\mc)$ because  $\tmi(\bar{S})\tmi(S)\subseteq \tmi(\bar{S}\cap S)$ for every $S$ in $\mc'$  by Lemma~\ref{lem:single-ideals-sub}, and $\bar{S}\cap S$ has to be in $\mc'$ . Thus $\sum_{\substack{S\in\mc}}\tmi(S)=\tmi(\bar{S})+\tma(\mc')$ is a $\Cst$\nb-algebra by e.g. \cite[Theorem~3.1.7]{murphy1990c}. Since $\tma(\mc')$ is a closed ideal in~$\tma(\mc)$ there is an  isomorphism $\tma(\mc)/\tma(\mc')\cong \tmi(\bar{S})/(\tmi(\bar{S})\cap\tma(\mc'))$ which proves that the given sequence is exact.
\end{proof}

We also define the analogous subalgebras at the level of $\Toepr(\E)^G \cong \Toepu(\E)^G$, namely
\[
\mi(S)\coloneqq \clsp\{\fock(\xi_{p_1})^* \fock(\xi_{p_2})  \cdots \fock(\xi_{p_{2k-1}})^* \fock(\xi_{p_{2k}}) \mid \dot\alpha = e \text{ and } K(\alpha) = S\}
\] 
and
\[
\ma(\mc) \coloneqq \clsp \{\mi(S) \mid S\in \mc\}.
\]
These are the images in $\Toepr(\E)^G$ of  $\tmi(S)$ and $\tma(\mc)$ from \lemref{lem:single-ideals-sub} and \lemref{lem:direct-limit};  and
it is clear that the analogues of both lemmas also hold at the level of $ \Toepr(\E)^G$.

\begin{rem}\label{rem:sameFPAforallG}
Along the lines indicated in \cite[Remark~3.11]{SEHNEM2019558} for the fixed point algebra of the covariance algebra of a product system, here too, the fixed point algebra $\Toepr(\E)^G$ of the reduced Toeplitz algebra does not depend on the specific group $G$. This is important because it implies that the notion of Fock covariance from \defref{def:CMcovariance} is intrinsic to the product system itself. 
In order to see why, recall from the proof of \cite[Proposition~4.1]{DKKLL2022}  that $\Toepr(\E)^G$ is the closed linear span of the set of elements  of the form 
\[
\fock(\xi_{p_1})^* \fock(\xi_{p_2})  \cdots \fock(\xi_{p_{2k-1}})^* \fock(\xi_{p_{2k}}) \qquad \text{ with } p_1\inv p_2 \cdots p_{2k-1}\inv p_{2k} = e. 
\]
By \cite[Lemma~2.9]{laca-sehnem} this set does not depend on the specific $G$ into which $P$ embeds because when $K(\alpha) = \emptyset$ for a neutral word in $P$ the product vanishes by \lemref{lem:single-ideals-sub}(1). 
\end{rem}

\subsection{Coaction on $\Toepu(\E)$} In order to conclude that the coaction of $G$ on $\Fl$ passes to the universal Toeplitz C*-algebra we verify next that the ideal of $\Fl$ generated by the kernel of the restriction of the Fock representation to $\Fl^G$ is gauge invariant.

\begin{lem}[cf.  \cite{SEHNEM2019558}*{Lemma~3.3}] Let $J_e^{\fock}\coloneqq \ker\tilde{\fock}\cap\Cst_{\mathrm{rep}}(\E)^G$ and let $J^{\fock}\idealin\Cst_{\mathrm{rep}}(\E)$ be the ideal generated by~$J_e^{\fock}$. Then 
\[
J^{\fock}=\bigoplus_{\substack{g\in G}}\Cst_{\mathrm{rep}}(\E)^gJ_e^{\fock}=\bigoplus_{\substack{g\in G}}J^{\fock}\cap \Cst_{\mathrm{rep}}(\E)^g,
\]
where $\Cst_{\mathrm{rep}}(\E)^g$ denotes the spectral subspace at~$g$ relative to the canonical coaction of~$G$.
\end{lem}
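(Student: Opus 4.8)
The plan is to follow the strategy of \cite{SEHNEM2019558}*{Lemma~3.3}, adapting it to the present setting with the gauge coaction of $G$ on $\Cst_{\mathrm{rep}}(\E)$. The key structural fact is that $\Cst_{\mathrm{rep}}(\E)$ is a Fell bundle C*-algebra for the grading by $G$ coming from the gauge coaction: $\Cst_{\mathrm{rep}}(\E)=\clsp\bigoplus_{g\in G}\Cst_{\mathrm{rep}}(\E)^g$ where $\Cst_{\mathrm{rep}}(\E)^g$ is the $g$-spectral subspace, and multiplication maps $\Cst_{\mathrm{rep}}(\E)^g\cdot\Cst_{\mathrm{rep}}(\E)^h$ into $\Cst_{\mathrm{rep}}(\E)^{gh}$. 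Concretely, $\Cst_{\mathrm{rep}}(\E)^g$ is spanned by words $\psi^u(\xi_{p_1})^*\psi^u(\xi_{p_2})\cdots\psi^u(\xi_{p_{2k-1}})^*\psi^u(\xi_{p_{2k}})$ (and products of such with single creation operators, more generally by $\psi^u(\xi_{p_1})\cdots$ type words) whose associated group element $p_1^{-1}p_2\cdots$ equals $g$, with the neutral case $g=e$ recovering $\Cst_{\mathrm{rep}}(\E)^G$. Since $J^{\fock}$ is by definition the ideal generated by $J_e^{\fock}\idealin\Cst_{\mathrm{rep}}(\E)^G$, the first containment to establish is $J^{\fock}\subseteq\bigoplus_g\Cst_{\mathrm{rep}}(\E)^gJ_e^{\fock}$: the right-hand side is clearly a two-sided ideal since $\Cst_{\mathrm{rep}}(\E)^h\cdot\Cst_{\mathrm{rep}}(\E)^g\subseteq\Cst_{\mathrm{rep}}(\E)^{hg}$ and $J_e^{\fock}$ is an ideal in the unit fibre, so $\Cst_{\mathrm{rep}}(\E)\cdot(\Cst_{\mathrm{rep}}(\E)^gJ_e^{\fock})\subseteq\sum_h\Cst_{\mathrm{rep}}(\E)^{hg}J_e^{\fock}$ and similarly on the right (using that $J_e^{\fock}\cdot\Cst_{\mathrm{rep}}(\E)^h$ lands back in $\Cst_{\mathrm{rep}}(\E)^h$, which holds because $J_e^{\fock}$ is an ideal in $\Cst_{\mathrm{rep}}(\E)^G$ and the Fell-bundle multiplication is compatible with the conditional expectation). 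Conversely the right-hand side is contained in $J^{\fock}$ because each $\Cst_{\mathrm{rep}}(\E)^gJ_e^{\fock}$ sits inside the ideal generated by $J_e^{\fock}$. This gives the first equality.

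For the second equality, I would first observe $\bigoplus_g\Cst_{\mathrm{rep}}(\E)^gJ_e^{\fock}\subseteq\bigoplus_g\bigl(J^{\fock}\cap\Cst_{\mathrm{rep}}(\E)^g\bigr)$, which is immediate since $\Cst_{\mathrm{rep}}(\E)^gJ_e^{\fock}\subseteq\Cst_{\mathrm{rep}}(\E)^g$ and $\subseteq J^{\fock}$. The reverse containment is the crux. By the first equality, $J^{\fock}$ is topologically graded in the sense that it is the closure of $\bigoplus_g\Cst_{\mathrm{rep}}(\E)^gJ_e^{\fock}$; the point is to identify $J^{\fock}\cap\Cst_{\mathrm{rep}}(\E)^g$ with the $g$-component $\Cst_{\mathrm{rep}}(\E)^gJ_e^{\fock}$. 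Here I would use that the faithful conditional expectation $E\colon\Cst_{\mathrm{rep}}(\E)\to\Cst_{\mathrm{rep}}(\E)^G$ (and, more generally, the contractive projections $E_g$ onto each spectral subspace coming from the coaction / the dual action of $\hat G$ when $G$ is abelian, or the Fell-bundle conditional expectations in general) maps $J^{\fock}$ into itself: since $J^{\fock}$ is gauge-invariant — which follows from the first equality, as the generating set $\bigoplus_g\Cst_{\mathrm{rep}}(\E)^gJ_e^{\fock}$ is visibly gauge-invariant — we get $E_g(J^{\fock})\subseteq J^{\fock}$, so for $x\in J^{\fock}\cap\Cst_{\mathrm{rep}}(\E)^g$ we have $x=E_g(x)$ and we must show $E_g(x)\in\Cst_{\mathrm{rep}}(\E)^gJ_e^{\fock}$. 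Writing $x$ as a limit of elements of the generating ideal and applying $E_g$ termwise reduces this to checking that $E_g$ sends $\sum_h\Cst_{\mathrm{rep}}(\E)^hJ_e^{\fock}$ into $\Cst_{\mathrm{rep}}(\E)^gJ_e^{\fock}$, which is clear because $E_g$ kills the $h\ne g$ terms and fixes the $h=g$ term.

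The main obstacle I anticipate is the measure-theoretic/approximation subtlety in the second equality: passing from "$J^{\fock}$ is the closed span of $\bigoplus_g\Cst_{\mathrm{rep}}(\E)^gJ_e^{\fock}$" to "$J^{\fock}\cap\Cst_{\mathrm{rep}}(\E)^g = \Cst_{\mathrm{rep}}(\E)^gJ_e^{\fock}$" as closed subspaces is exactly the assertion that the grading of $J^{\fock}$ inherited from $\Cst_{\mathrm{rep}}(\E)$ is again a topological $G$-grading, and requires knowing that each $\Cst_{\mathrm{rep}}(\E)^gJ_e^{\fock}$ is already norm-closed (equivalently, that $\Cst_{\mathrm{rep}}(\E)^g\cdot J_e^{\fock}$ need not be completed). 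This is where I would invoke that $\Cst_{\mathrm{rep}}(\E)$ carries a genuine (full) coaction of $G$ so that $J^{\fock}$, being gauge-invariant, is itself a coaction-invariant ideal, hence $\Cst_{\mathrm{rep}}(\E)/J^{\fock}$ inherits a coaction and $\Cst_{\mathrm{rep}}(\E)\to\Cst_{\mathrm{rep}}(\E)/J^{\fock}$ is equivariant; then the spectral subspaces of the quotient are the images of the spectral subspaces, and $\ker$ on each spectral subspace is exactly $J^{\fock}\cap\Cst_{\mathrm{rep}}(\E)^g$, which equals $\Cst_{\mathrm{rep}}(\E)^gJ_e^{\fock}$ by the standard Fell-bundle/ideal correspondence (cf. the argument in \cite{SEHNEM2019558}*{Lemma~3.3}). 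With closedness handled this way, the direct-sum decompositions follow formally, and in particular the stated identity shows $J^{\fock}$ is gauge-invariant, which is the property needed to descend the coaction to $\Toepu(\E)$.
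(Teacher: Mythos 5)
There is a genuine gap in your argument for the first equality, and it sits exactly where the paper has to do real work. You assert that $\bigoplus_{g}\Cst_{\mathrm{rep}}(\E)^gJ_e^{\fock}$ is ``clearly a two-sided ideal''. Left multiplication is indeed immediate from the grading, but for right multiplication you need
\[
\bigl(\Cst_{\mathrm{rep}}(\E)^gJ_e^{\fock}\bigr)\,\Cst_{\mathrm{rep}}(\E)^h\subseteq \Cst_{\mathrm{rep}}(\E)^{gh}J_e^{\fock},
\]
and the justification you offer --- that $J_e^{\fock}\,\Cst_{\mathrm{rep}}(\E)^h$ lands in $\Cst_{\mathrm{rep}}(\E)^h$ because $J_e^{\fock}$ is an ideal of the unit fibre --- only places the product in the correct spectral subspace $\Cst_{\mathrm{rep}}(\E)^{gh}$; it does not show that a factor in $J_e^{\fock}$ survives on the right. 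What is actually needed is the commutation relation $J_e^{\fock}\,\Cst_{\mathrm{rep}}(\E)^h\subseteq \Cst_{\mathrm{rep}}(\E)^h\,J_e^{\fock}$, and this is \emph{not} automatic for an arbitrary ideal of the unit fibre of a Fell bundle (for a generic ideal $I\idealin B_e$ the space $\bigoplus_g B_gI$ need not be a right ideal). It holds here for two reasons specific to $J_e^{\fock}$: first, $J_e^{\fock}=\ker\tilde{\fock}\cap\Cst_{\mathrm{rep}}(\E)^G$ is the restriction of the kernel of a globally defined $^*$\nb-homomorphism, so for $b\in J_e^{\fock}$ and $c_g\in\Cst_{\mathrm{rep}}(\E)^g$ one gets $\tilde{\fock}(bc_g)=0$ and hence $c_g^*b^*bc_g\in\ker\tilde{\fock}\cap\Cst_{\mathrm{rep}}(\E)^G=J_e^{\fock}$; second, viewing $\Cst_{\mathrm{rep}}(\E)^g$ as a right Hilbert module over $\Cst_{\mathrm{rep}}(\E)^G$ with $\langle x,y\rangle=x^*y$, the factorization lemma \cite[Lemma~3.5]{Pimsner:Generalizing_Cuntz-Krieger} converts $\langle bc_g,bc_g\rangle\in J_e^{\fock}$ into $bc_g\in\Cst_{\mathrm{rep}}(\E)^gJ_e^{\fock}$. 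Without this step your ``generating set'' is not visibly an ideal and the first equality does not follow.

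Your treatment of the second equality (apply the contractive projections $E_g$ and observe that they kill the $h\neq g$ summands and fix the $h=g$ one) is essentially the paper's argument and is fine once the first equality is in place; the closedness of $\Cst_{\mathrm{rep}}(\E)^gJ_e^{\fock}$ that you worry about also comes for free from the same Hilbert-module picture (a set of the form $XI$ for a Hilbert $B$\nb-module $X$ and a closed ideal $I\idealin B$ is automatically closed), so the detour through coactions on the quotient is unnecessary. In short: supply the commutation relation $J_e^{\fock}\,\Cst_{\mathrm{rep}}(\E)^g\subseteq\Cst_{\mathrm{rep}}(\E)^g\,J_e^{\fock}$ via the two ingredients above, and the rest of your outline goes through.
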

\begin{proof} We begin by proving the first equality. To do so, it suffices to show that $J_e^{\fock} \Cst_{\mathrm{rep}}(\E)^g\subseteq  \Cst_{\mathrm{rep}}(\E)^g J_e^{\fock}$. Take $c_g\in  \Cst_{\mathrm{rep}}(\E)^g $ and $b\in J_e^{\fock}$. Since $\tilde{\fock}$ is a \Star homomorphism, $bc_g$ belongs to the kernel of the Fock representation. So $c_g^*b^*bc_g\in \ker\tilde{\fock}\cap\Cst_{\mathrm{rep}}(\E)^G$. And so $bc_g\in \Cst_{\mathrm{rep}}(\E)^gJ_e^{\fock}$ by \cite[Lemma~3.5]{Pimsner:Generalizing_Cuntz-Krieger} (view $\Cst_{\mathrm{rep}}(\E)^g$ as a correspondence over~$\Cst_{\mathrm{rep}}(\E)^G$). Using the first equality and the contractive projection from $\Cst_{\mathrm{rep}}(\E)$ onto $\Cst_{\mathrm{rep}}(\E)^g$, we deduce that $J^{\fock}\cap \Cst_{\mathrm{rep}}(\E)^g=\Cst_{\mathrm{rep}}(\E)^gJ_e^{\fock}$ and hence the second equality also holds.
\end{proof}

As a consequence of the previous lemma, we have the following:

\begin{lem}[cf. \cite{SEHNEM2019558}*{Lemma~3.4}] Let $q_{\mathrm{cov}}\colon\Cst_{\mathrm{rep}}(\E)/ J^{\fock} \to \Toepu(\E)$ be the quotient map. There is a full nondegenerate coaction $\delta\colon\Cst_{\mathrm{rep}}(\E)/  J^{\fock}\rightarrow\Cst_{\mathrm{rep}}(\E)/ J^{\fock}\otimes\Cst(G)$ satisfying $\delta\circ q_{\mathrm{cov}}=(q_{\mathrm{cov}}\otimes\id_{\Cst(G)})\circ\widetilde{\delta}$, where $\widetilde{\delta}$ stands for the coaction of~$G$ on~$\Cst_{\mathrm{rep}}(\E)$. Moreover, the spectral subspace for~$\delta$ at~$g\in G$  is canonically isomorphic to~$\Cst_{\mathrm{rep}}(\E)^g/\Cst_{\mathrm{rep}}(\E)^gJ_e^{\fock}.$ 
\begin{proof} The proof follows as in \cite[Lemma~3.4]{SEHNEM2019558}.
\end{proof}
\end{lem}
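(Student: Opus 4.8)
The plan is to mirror the proof of \cite[Lemma~3.4]{SEHNEM2019558} essentially verbatim, using the preceding lemma as the key input. First I would recall that $J_e^{\fock}=\ker\tilde{\fock}\cap\Cst_{\mathrm{rep}}(\E)^G$ is an ideal of $\Cst_{\mathrm{rep}}(\E)^G$, and that by the previous lemma the ideal $J^{\fock}$ it generates in $\Cst_{\mathrm{rep}}(\E)$ decomposes as a direct sum $\bigoplus_{g\in G}J^{\fock}\cap\Cst_{\mathrm{rep}}(\E)^g$ with $J^{\fock}\cap\Cst_{\mathrm{rep}}(\E)^g=\Cst_{\mathrm{rep}}(\E)^gJ_e^{\fock}$. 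This grading compatibility is precisely what is needed to push the coaction $\widetilde{\delta}$ down to the quotient: one checks that $\widetilde{\delta}(J^{\fock})\subseteq \ker\big((q_{\mathrm{cov}}\otimes\id_{\Cst(G)})\big)$ by applying $\widetilde{\delta}$ to a homogeneous generator $c_g b$ with $c_g\in\Cst_{\mathrm{rep}}(\E)^g$, $b\in J_e^{\fock}$, and observing that $\widetilde{\delta}(c_g b)=c_g b\otimes u_g$ already lies in $J^{\fock}\otimes\Cst(G)$. Hence $\widetilde{\delta}$ descends to a $*$-homomorphism $\delta\colon\Cst_{\mathrm{rep}}(\E)/J^{\fock}\to (\Cst_{\mathrm{rep}}(\E)/J^{\fock})\otimes\Cst(G)$ with $\delta\circ q_{\mathrm{cov}}=(q_{\mathrm{cov}}\otimes\id_{\Cst(G)})\circ\widetilde{\delta}$, which is the defining relation.

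Next I would verify that $\delta$ is a coaction, i.e. that it satisfies the coaction identity $(\delta\otimes\id)\circ\delta=(\id\otimes\delta_G)\circ\delta$ (where $\delta_G$ is the comultiplication on $\Cst(G)$) and the nondegeneracy/coaction-nondegeneracy condition $\overline{\delta(\Cst_{\mathrm{rep}}(\E)/J^{\fock})(1\otimes\Cst(G))}=(\Cst_{\mathrm{rep}}(\E)/J^{\fock})\otimes\Cst(G)$. Both follow formally by applying $q_{\mathrm{cov}}\otimes\id$ (resp. $q_{\mathrm{cov}}\otimes\id\otimes\id$) to the corresponding identities for $\widetilde{\delta}$, using surjectivity of $q_{\mathrm{cov}}$ and the intertwining relation; this is the standard argument that a coaction descends to a quotient by a gauge-invariant ideal. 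Fullness of $\delta$ follows from fullness of $\widetilde{\delta}$ together with the fact that quotients of full coactions by invariant ideals are full, or directly by checking that the fixed-point algebra generates; since $\Toepu(\E)=\Cst_{\mathrm{rep}}(\E)/J^{\fock}$ is generated by the image of the universal representation, fullness is immediate.

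For the last assertion, the spectral subspace $(\Cst_{\mathrm{rep}}(\E)/J^{\fock})_g$ is by definition the image of $\Cst_{\mathrm{rep}}(\E)^g$ under $q_{\mathrm{cov}}$, and the kernel of $q_{\mathrm{cov}}|_{\Cst_{\mathrm{rep}}(\E)^g}$ is $J^{\fock}\cap\Cst_{\mathrm{rep}}(\E)^g$, which equals $\Cst_{\mathrm{rep}}(\E)^gJ_e^{\fock}$ by the previous lemma. Thus $(\Cst_{\mathrm{rep}}(\E)/J^{\fock})_g\cong\Cst_{\mathrm{rep}}(\E)^g/\Cst_{\mathrm{rep}}(\E)^gJ_e^{\fock}$ canonically. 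I do not anticipate a genuine obstacle here: the only point requiring minor care is confirming that the gauge-invariance of $J^{\fock}$ supplied by the previous lemma is exactly the hypothesis needed to invoke the quotient-coaction machinery (this is where the direct-sum decomposition over $G$ matters), and then the verification of the coaction axioms and of nondegeneracy is routine diagram-chasing that is already recorded in \cite[Lemma~3.4]{SEHNEM2019558}.
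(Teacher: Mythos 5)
Your proposal is correct and follows exactly the route the paper intends: it invokes the direct-sum decomposition $J^{\fock}=\bigoplus_{g}\Cst_{\mathrm{rep}}(\E)^gJ_e^{\fock}$ from the preceding lemma to check $\widetilde{\delta}(J^{\fock})\subseteq J^{\fock}\otimes\Cst(G)$ on homogeneous generators, descends $\widetilde{\delta}$ to the quotient, and identifies the spectral subspaces via $\ker\bigl(q_{\mathrm{cov}}|_{\Cst_{\mathrm{rep}}(\E)^g}\bigr)=J^{\fock}\cap\Cst_{\mathrm{rep}}(\E)^g=\Cst_{\mathrm{rep}}(\E)^gJ_e^{\fock}$, which is precisely the argument of \cite[Lemma~3.4]{SEHNEM2019558} that the paper cites. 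The only cosmetic remark is that injectivity (part of being a \emph{full} coaction) is most cleanly seen from the descended identity $(\id\otimes\varepsilon)\circ\delta=\id$ with $\varepsilon$ the trivial representation of $\Cst(G)$, rather than from generation by the fixed-point algebra, but this does not affect correctness.
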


\begin{prop}\label{pro:equiv-subalgebras}
Let $\psi\colon\E \to B$ be a representation of the product system $\E$ in the C*-algebra $B$, and denote by $\tilde\psi$ the corresponding 
homomorphism of $\Fl$ onto the C*-algebra generated by $\psi(\E)$. Then $\psi$ 
is Fock covariant if and only if for every $\mc$ the restriction $\tilde\psi: \tma(\mc) \to B$ factors through $\ma(\mc) := \tilde\fock(\tma(\mc))$ (image of $\tma(\mc)$ in the Fock representation). 
\begin{proof} The ``only if'' direction is clear because $\tma(\mc) \subset \Fl^G$. We will prove that a representation $\psi$ of $\E$ is Fock covariant whenever its restriction $\tilde{\psi}\restriction_{\tma(\mc)}$ factors through the Fock representation for every  finite $\cap$-closed $\mc\subset\J$. It suffices to show that $\tilde{\psi}$ vanishes on  $J_e^{\fock}\coloneqq \ker\tilde{\fock}\cap\Cst_{\mathrm{rep}}(\E)^G$. By Lemma~\ref{lem:direct-limit}, $\Cst_{\mathrm{rep}}(\E)^G=\lim_{\mc} \tma(\mc)$. Thus $J_e^{\fock}=\lim_{\mc}J_e^{\fock}\cap \tma(\mc)$ by e.g. \cite[Lemma~1.3]{Adji-Laca-Nilsen-Raeburn}. Since $\tilde{\psi}$ vanishes on $J_e^{\fock}\cap \tma(\mc)$ for every~$\mc$ by assumption, we conclude that $\psi$ is covariant.
\end{proof}
\end{prop}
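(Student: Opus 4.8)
The plan is to prove the two implications separately. The ``only if'' direction is a direct unwinding of \defref{def:CMcovariance} together with the inclusion $\tma(\mc)\subseteq\Fl^G$; the ``if'' direction rests on the inductive-limit presentation of $\Fl^G$ provided by \lemref{lem:direct-limit}.

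For the forward implication, suppose $\psi$ is Fock covariant. By \defref{def:CMcovariance} the restriction of $\tilde\psi$ to $\Fl^G$ factors through $\Toepr(\E)^G=\tilde\fock(\Fl^G)$, equivalently $\tilde\psi$ annihilates $J_e^{\fock}:=\ker\tilde\fock\cap\Fl^G$. Since $\tma(\mc)\subseteq\Fl^G$ for every finite $\cap$-closed $\mc\subseteq\J$ by \lemref{lem:direct-limit}, the restriction $\tilde\psi|_{\tma(\mc)}$ annihilates $\ker\tilde\fock\cap\tma(\mc)$, which is precisely the kernel of $\tilde\fock|_{\tma(\mc)}\colon\tma(\mc)\twoheadrightarrow\ma(\mc)$; hence $\tilde\psi|_{\tma(\mc)}$ factors through $\ma(\mc)$.

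For the reverse implication it is enough to show that $\tilde\psi$ annihilates $J_e^{\fock}$: then $\tilde\psi|_{\Fl^G}$ descends to $\Fl^G/J_e^{\fock}\cong\Toepr(\E)^G$, which is exactly Fock covariance. By \lemref{lem:direct-limit}(3) the C*-subalgebras $\tma(\mc)$ form an increasing system (the finite $\cap$-closed subcollections of $\J$ being directed under inclusion) with union dense in $\Fl^G$, so $\Fl^G=\overline{\bigcup_\mc\tma(\mc)}$. As $J_e^{\fock}$ is an ideal of $\Fl^G$, an approximate-unit argument (e.g. \cite[Lemma~1.3]{Adji-Laca-Nilsen-Raeburn}) gives $J_e^{\fock}=\overline{\bigcup_\mc(J_e^{\fock}\cap\tma(\mc))}$, and $J_e^{\fock}\cap\tma(\mc)=\ker\tilde\fock\cap\tma(\mc)$ because $\tma(\mc)\subseteq\Fl^G$. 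By hypothesis $\tilde\psi$ kills each $\ker\tilde\fock\cap\tma(\mc)$, hence, by norm continuity, all of $J_e^{\fock}$.

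The only delicate point I anticipate is the interchange of the ideal $J_e^{\fock}$ with the inductive limit, i.e. the identity $J_e^{\fock}=\overline{\bigcup_\mc(\ker\tilde\fock\cap\tma(\mc))}$. This needs (i) that $\{\tma(\mc)\}_\mc$ genuinely forms a directed system of C*-subalgebras with dense union in $\Fl^G$, which is the substance of \lemref{lem:direct-limit}(3), and (ii) the general fact that an ideal in a C*-algebraic inductive limit is recovered as the closed union of its traces on the building blocks. Everything else is a formal consequence of the definition of Fock covariance and of the definitions of $\tma(\mc)$ and $\ma(\mc)$.
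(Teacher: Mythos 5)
Your proposal is correct and follows essentially the same route as the paper's proof: the ``only if'' direction from the inclusion $\tma(\mc)\subset\Fl^G$, and the ``if'' direction by reducing to the vanishing of $\tilde\psi$ on $J_e^{\fock}$ via the inductive-limit presentation $\Fl^G=\lim_\mc\tma(\mc)$ and the identity $J_e^{\fock}=\lim_\mc J_e^{\fock}\cap\tma(\mc)$ from \cite[Lemma~1.3]{Adji-Laca-Nilsen-Raeburn}. The ``delicate point'' you flag is handled in the paper by exactly the same citation, so there is nothing to add.
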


\section{Tensor products}  \label{sec:tensorproducts}
Suppose that $\E = (\E_p)_{p\in P}$ is a product system of correspondences over a C*-algebra $A$ and that $B$ is another C*-algebra, and choose a norm $\mu$ on the algebraic tensor product $A \odot B$. Then the system of algebraic tensor products $\E \odot B = \{ \E_p\odot B\}_{p\in P}$ can be completed to a product system $\E\otimes_{\mu} B=(\E_p\otimes_\mu B)_{p\in P}$ over $A\otimes_\mu B$ with the inner product and the norm on  $\E_p \odot B$ determined by the norm $\mu$ on $A\otimes_\mu B$ via 
\[
\langle\xi\otimes b, \eta\otimes c\rangle = \langle\xi, \eta\rangle \otimes \langle b, c\rangle
\qquad \text{ for }\xi,\eta \in \E_p \text{ and  } b,c\in B.\] 

Motivated by the case of single correspondences and also of C*-algebras associated to Fell bundles, in this section we investigate the relationships between $\Toepu(\E)\otimes_{\max}B$ and $\Toepu(\E\otimes_{\max} B)$, and similarly between $\Toepr(\E)\otimes_{\min} B$ and $\Toepr(\E\otimes_{\min} B)$. For a single correspondence, there are canonical isomorphisms $\Toepr(\E)\otimes_{\max}B\cong\Toepr(\E\otimes_{\max} B)$ and $\Toepr(\E)\otimes_{\min}B\cong\Toepr(\E\otimes_{\min} B)$. We prove below that the canonical isomorphism at the level of minimal tensor products and reduced Toeplitz algebras always exists in the setting of product systems, \proref{pro:tensorB}. However, complications arise from both the more general structure of the semigroup and of the product system. So at the level of maximal tensor products and universal Toeplitz algebras we are only able to establish the existence of a canonical homomorphism from $\Toepu(\E)\otimes_{\max}B$ onto $\Toepu(\E\otimes_{\max} B)$, \proref{pro:surjective}. 

\bigskip
 
 Initially we will consider the more general situation of the external tensor product of two product systems, regarded as a product system over the direct product of monoids with coefficients in the minimal tensor product of the coefficient algebras. 
 
  Suppose that $A$ and $B$ are C*-algebras, $\X$ is a Hilbert $A$-module and $\Y$ is a Hilbert $B$-module. Recall that the exterior tensor product $\X\otimes_{\min} \Y$ is the completion of the algebraic tensor product of $\X$ and $\Y$, with respect to the norm arising from the $A\otimes_{\min} B$-valued sesquilinear form defined on elementary tensors by
\[
 \langle\xi_1\otimes \eta_1, \xi_2\otimes \eta_2\rangle = \langle\xi_1, \xi_2\rangle \otimes \langle \eta_1, \eta_2\rangle \in A\otimes_{\min} B
\]
The fact that the above sesquilinear form is actually an inner product is proven in \cite[pg. 34]{Lan}; there it is also shown that  $\X\otimes_{\min} \Y$ admits a (natural) right module action by $A\otimes_{\min} B$ and so it becomes a Hilbert $A\otimes_{\min} B$-module . Furthermore, there is a $*$-injection 
\[
j \colon \Bound(\X)\otimes_{\min} \Bound(\Y)\longrightarrow \Bound(\X\otimes_{\min} \Y); s\otimes t \longmapsto j(s\otimes t),
\]
with $j(s\otimes t) (\xi \otimes \eta)= s\xi \otimes t \eta$, $\xi \in \X.\eta \in \Y$. (For a proof of this fact see \cite[pg. 35-37]{Lan}.) The existence of the injection $j$ easily implies that if $\X$ and $\Y$ are C*-correspondences over $A$ and $B$ respectively, then $\X\otimes_{\min} \Y$ becomes an $A\otimes_{\min} B$-correspondence.

\begin{prop}\label{pro:tensorY}
 Let $\X$ and $\Y$ be  product systems over monoids $P$ and $Q$ with coefficient C*-algebras $A$ and  $B$, respectively, then there is a product system $\X \otimes_{\min} \Y$ over the monoid $P\times Q$  with coefficient C*-algebra $A\otimes_{\min} B$ in which the fiber over $(p,q)$ is 
 $ \X_p\otimes \Y_q $ and the multiplication 
 is given by 
 \[
(\xi_1\otimes \eta_1)_{(p_1,q_1)} (\xi_2\otimes \eta_2)_{(p_2, q_2)} = (\xi_1 \xi_2)_{p_1p_2} \otimes  (\eta_1 \eta_2)_{q_1q_2}.
     \] 
This gives  a  canonical isomorphism of Fock modules,
\[
 (\X \otimes_{\min} \Y)^+ \cong \X^+ \otimes_{\min} \Y^+
 \]
 and a spatial isomorphism of reduced Toeplitz algebras,
 \[
  \Toepr(\X \otimes_{\min} \Y) \cong \Toepr(\X) \otimes_{\min} \Toepr(\Y).
 \]
 
 \begin{proof}
 It follows from the previous discussion that each fiber $(\X\otimes_{\min}\Y)_{p,q}$, $p,q \in P$, is an $A\otimes_{\min} B$-correspondence. The balanced tensor product
\[
(\X\otimes_{\min}  \Y)_{p_1,q_1}\underset{A\otimes_{\min}  B}{\otimes}(\X\otimes_{\min}  \Y)_{p_2,q_2}
\]
with respect to $A\otimes B$ corresponds to the tensor product of balanced tensors 
\[
(\X_{p_1}\underset{A}{\otimes} \X_{p_2}) \otimes (\Y_{q_1}\underset{B}{\otimes} \Y_{q_2}).
\]
 Hence if we define a multiplication on $\X\otimes_{\min} \Y$  by
     \[
(\xi_1\otimes \eta_1)_{(p_1,q_1)} (\xi_2\otimes \eta_2)_{(p_2, q_2)} = (\xi_1 \xi_2) \otimes  (\eta_1 \eta_2) \in \X_{p_1 p_2} \otimes \Y_{q_1q_2},
     \]
with $\xi_i \in \X_{p_i}$, $\eta_i \in \X_{q_i}$ $i=1,2$, then this multiplication is associative and therefore $\X\otimes_{\min} \Y$ becomes a product system.

It is easy to see now that there is an isometric isomorphism of right-Hilbert $A\otimes B$ Fock bimodules
\[
 (\X \otimes_{\min} \Y)^+ \cong \X^+ \otimes_{\min} \Y^+.
 \]
 Since for all elementary tensors $\xi_i \otimes \eta_i\in \X_{p_i} \otimes_{\min} \Y_{q_i}$ for $i =1,2$,
\begin{align*}
    L_{\xi_1 \otimes \eta_1} (\xi_2 \otimes \eta_2)  &= (\xi_1 \otimes \eta_1) \underset{A\otimes B}{\otimes} (\xi_2 \otimes \eta_2)
    = 
 (\xi_1  \underset{A}{\otimes} \xi_2) \otimes (\eta_1 \underset{ B}{\otimes} \eta_2))\\
 &=
 L_{\xi_1} \xi_2 \otimes  L_{\eta_1}  \eta_2
 = (L_{\xi_1} \otimes  L_{\eta_1} ) ( \xi_2 \otimes   \eta_2)
\end{align*}
 the map 
\[
L_{\xi_1 \otimes \eta_1} \mapsto 
L_{\xi_1} \otimes_{\min} L_{\eta_1}
 \]
 extends to the required isomorphism of reduced Toeplitz algebras.
 \end{proof}
 \end{prop}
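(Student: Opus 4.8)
The plan is to establish \proref{pro:tensorY} by treating the three assertions in sequence, using the $*$-injection $j$ of the exterior tensor product as the workhorse. First I would verify that $\X\otimes_{\min}\Y=((\X\otimes_{\min}\Y)_{(p,q)})_{(p,q)\in P\times Q}$ is genuinely a product system over $P\times Q$. The fibers are $A\otimes_{\min}B$-correspondences by the preliminary discussion, so what remains is to produce the multiplication maps $\mu^{\X\otimes\Y}_{(p_1,q_1),(p_2,q_2)}$ and check associativity. For the multiplication maps I would argue that the balanced tensor product $(\X_{p_1}\otimes_{\min}\Y_{q_1})\otimes_{A\otimes_{\min}B}(\X_{p_2}\otimes_{\min}\Y_{q_2})$ is canonically isomorphic to $(\X_{p_1}\otimes_A\X_{p_2})\otimes_{\min}(\Y_{q_1}\otimes_B\Y_{q_2})$ — this is a standard identification of interior-and-exterior tensor products, obtained by checking on elementary tensors that the two inner products agree and then appealing to density — and then compose with $\mu^\X_{p_1,p_2}\otimes\mu^\Y_{q_1,q_2}$. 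Associativity of the resulting maps follows from associativity of $\mu^\X$ and $\mu^\Y$ together with functoriality of $\otimes_{\min}$, so the commuting-diagram condition reduces to two instances of the corresponding diagrams for $\X$ and $\Y$; I would just indicate this rather than write out the pentagon.

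Next I would establish the isomorphism of Fock modules. Since $\X^+=\bigoplus_{p\in P}\X_p$ and $\Y^+=\bigoplus_{q\in Q}\Y_q$, the exterior tensor product $\X^+\otimes_{\min}\Y^+$ decomposes as $\bigoplus_{(p,q)\in P\times Q}(\X_p\otimes_{\min}\Y_q)$ — the exterior tensor product commutes with direct sums of Hilbert modules because the inner products are orthogonal across summands — and this is exactly $(\X\otimes_{\min}\Y)^+$. I would spell out that the identification is inner-product preserving on elementary tensors, $\langle\xi_1\otimes\eta_1,\xi_2\otimes\eta_2\rangle=\langle\xi_1,\xi_2\rangle\otimes\langle\eta_1,\eta_2\rangle$, matching the inner product on $(\X\otimes_{\min}\Y)^+$ fiberwise, and that it intertwines the $A\otimes_{\min}B$-actions; density then gives the isometric isomorphism of right-Hilbert bimodules.

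For the last assertion I would transport the creation operators across the Fock-module isomorphism. Under the identification $(\X\otimes_{\min}\Y)^+\cong\X^+\otimes_{\min}\Y^+$, the creation operator $L_{\xi\otimes\eta}$ with $\xi\in\X_p$, $\eta\in\Y_q$ acts by $(\xi\otimes\eta)\otimes(\xi'\otimes\eta')\mapsto(\xi\xi')\otimes(\eta\eta')$, which under the balanced-tensor identification is precisely $j(L_\xi\otimes L_\eta)=L_\xi\otimes_{\min}L_\eta$, as the displayed computation in the statement shows. Hence conjugation by the unitary implementing the Fock isomorphism carries $\Toepr(\X\otimes_{\min}\Y)=\clsp\{L_{\xi\otimes\eta}\}$ onto the C*-subalgebra of $\Bound(\X^+\otimes_{\min}\Y^+)$ generated by $\{L_\xi\otimes_{\min}L_\eta:\xi\in\X^+,\eta\in\Y^+\}$. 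It remains to see that this C*-subalgebra is $\Toepr(\X)\otimes_{\min}\Toepr(\Y)$; for this I would invoke that $\Toepr(\X)=\clsp\{L_\xi:\xi\in\X^+\}\subseteq\Bound(\X^+)$ and $\Toepr(\Y)\subseteq\Bound(\Y^+)$ are concretely represented, that $j$ restricts to the canonical spatial embedding $\Bound(\X^+)\otimes_{\min}\Bound(\Y^+)\hookrightarrow\Bound(\X^+\otimes_{\min}\Y^+)$, and that the minimal tensor product of two concretely represented C*-algebras is generated by the elementary tensors of their generators — so $\Toepr(\X)\otimes_{\min}\Toepr(\Y)$ is exactly $\clsp$ of products of the $L_\xi\otimes L_\eta$. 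Thus the map $L_{\xi\otimes\eta}\mapsto L_\xi\otimes_{\min}L_\eta$ extends to a spatial $*$-isomorphism.

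I expect the only genuinely delicate point to be the identification of the balanced tensor product $(\X_{p_1}\otimes_{\min}\Y_{q_1})\otimes_{A\otimes_{\min}B}(\X_{p_2}\otimes_{\min}\Y_{q_2})$ with $(\X_{p_1}\otimes_A\X_{p_2})\otimes_{\min}(\Y_{q_1}\otimes_B\Y_{q_2})$: one must check that the obvious map on algebraic tensors is well defined modulo the balancing relations, is inner-product preserving (a short computation on elementary tensors using $\langle\xi_1\otimes\eta_1,(a\otimes b)(\xi_2\otimes\eta_2)\rangle=\langle\xi_1,a\xi_2\rangle\otimes\langle\eta_1,b\eta_2\rangle$), and has dense range. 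Everything else is bookkeeping with the functoriality of $\otimes_{\min}$ and the fact — recorded in the excerpt from \cite{Lan} — that $j$ is an isometric $*$-embedding, so I would present those parts tersely.
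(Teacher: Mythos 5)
Your argument is correct and follows essentially the same route as the paper: identify the balanced tensor product of fibers $(\X_{p_1}\otimes_{\min}\Y_{q_1})\otimes_{A\otimes_{\min}B}(\X_{p_2}\otimes_{\min}\Y_{q_2})$ with $(\X_{p_1}\otimes_A\X_{p_2})\otimes_{\min}(\Y_{q_1}\otimes_B\Y_{q_2})$, deduce the product-system structure, decompose the Fock module, and match creation operators via $L_{\xi\otimes\eta}\mapsto L_\xi\otimes_{\min}L_\eta$. The one place you say more than the paper --- that $\Toepr(\X)\otimes_{\min}\Toepr(\Y)$ equals the C*-algebra generated by the elementary tensors $L_\xi\otimes L_\eta$ --- is not justified by a general principle about generators of minimal tensor products (which is false in general), but it does hold here because the unit fibers $\X_e=A$ and $\Y_e=B$ supply approximate units allowing one to pad monomials of mismatched length and adjoint pattern, so your conclusion stands.
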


 \begin{prop} \label{pro:tensorB}
 Let $\E$ be a product system over $A$ and let $B$ be a C*-algebra, then there are canonical isomorphisms $\E^+ \otimes B \cong (\E\otimes B)^+$ and 
 \[
 \Toepr(\E) \otimes_{\min} B \cong \Toepr(\E \otimes_{\min} B).
 \]
 \begin{proof}
 The result follows from \proref{pro:tensorY} with  $\X =\E$, $\Y =B$ and $Q =\{e\}$ so that $P\times Q \cong P$ and $\Toepr(\Y) =B$.
 \end{proof}
 \end{prop}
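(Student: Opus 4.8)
The plan is to deduce \proref{pro:tensorB} as a special case of \proref{pro:tensorY}, so the only real content is to check that the hypotheses match and that the identifications in \proref{pro:tensorY} collapse correctly when one of the two factors is the trivial product system over the trivial monoid. Concretely, I would take $\X = \E$ as a product system over $P$ with coefficient algebra $A$, and I would take $\Y$ to be the product system over the one-element monoid $Q = \{e\}$ whose unique fiber $\Y_e$ is the identity correspondence $B$ over $B$; its multiplication map $\Y_e \otimes_B \Y_e \to \Y_e$ is just the multiplication of $B$, so $\Y$ is indeed a (full) product system over $Q$ with coefficient algebra $B$. Then $P \times Q \cong P$ via $(p,e) \mapsto p$, and under this identification the fiber of $\X \otimes_{\min} \Y$ over $p$ is $\X_p \otimes_{\min} B$, which is by definition (from the discussion preceding \proref{pro:tensorY}, or the opening paragraph of \secref{sec:tensorproducts}) the correspondence $\E_p \otimes_{\min} B$ over $A \otimes_{\min} B$; so $\X \otimes_{\min} \Y$ is precisely the product system $\E \otimes_{\min} B$.

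Next I would read off the two conclusions of \proref{pro:tensorY} in this special case. The Fock module isomorphism $(\X \otimes_{\min} \Y)^+ \cong \X^+ \otimes_{\min} \Y^+$ becomes $(\E \otimes_{\min} B)^+ \cong \E^+ \otimes_{\min} B$, once one observes that $\Y^+ = \bigoplus_{q \in Q} \Y_q = \Y_e = B$ since $Q = \{e\}$. Similarly, the spatial isomorphism of reduced Toeplitz algebras $\Toepr(\X \otimes_{\min} \Y) \cong \Toepr(\X) \otimes_{\min} \Toepr(\Y)$ becomes $\Toepr(\E \otimes_{\min} B) \cong \Toepr(\E) \otimes_{\min} \Toepr(\Y)$, and it remains only to identify $\Toepr(\Y)$ with $B$ itself. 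This last point is immediate: for the product system $\Y$ over $\{e\}$ the Fock module is $B$, the creation operator $\fock_e(b)$ is left multiplication by $b$ on $B$, and $\Toepr(\Y) = \clsp\{\fock_e(b) : b \in B\} \cong B$ since $B$ acts faithfully on itself by left multiplication (it is nondegenerate, or one can adjoin a unit). Composing these identifications gives exactly the two displayed canonical isomorphisms in the statement.

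The only step requiring any care — and the one I would expect to be the mild obstacle — is checking that the product-system structure on $\X \otimes_{\min} \Y$ provided by \proref{pro:tensorY} genuinely agrees, as a product system over $P$, with the ad hoc completion $\E \otimes_{\min} B = (\E_p \otimes_{\min} B)_{p \in P}$ described at the start of \secref{sec:tensorproducts}, including that the multiplication maps $\mu^{\E \otimes B}_{p,p'}$ and the left action of $A \otimes_{\min} B$ coincide on elementary tensors. This is a routine matching of formulas: the multiplication in \proref{pro:tensorY} sends $(\xi_1 \otimes b_1)(\xi_2 \otimes b_2) \mapsto (\xi_1 \xi_2) \otimes (b_1 b_2)$, which is exactly the multiplication of $\E \otimes_{\min} B$; and the inner products agree because both are determined by $\langle \xi \otimes b, \eta \otimes c\rangle = \langle \xi, \eta\rangle \otimes \langle b, c\rangle$. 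So no new estimates are needed; the proof is genuinely the one-line reduction written in the excerpt, and my write-up would simply make the three identifications $P \times \{e\} \cong P$, $\Y^+ = B$, and $\Toepr(\Y) \cong B$ explicit.
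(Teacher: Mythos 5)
Your proposal is correct and is exactly the paper's argument: specialize \proref{pro:tensorY} to $\X=\E$, $\Y=B$ viewed as the trivial product system over $Q=\{e\}$, so that $P\times Q\cong P$, $\Y^+=B$, and $\Toepr(\Y)=B$. The extra identifications you spell out are the routine ones the paper leaves implicit.
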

 
 \begin{prop}\label{pro:surjective} 
Suppose $\E = (\E_p)_{p\in P}$ is a product system over the monoid $P$ with coefficients in $A$, and let $B$ be a C*-algebra. Then there is  a (canonical) homomorphism 
\[
\pi : \Toepu(\E)\otimes_{\max}B \rightarrow  \Toepu(\E\otimes_{\max}B)  . 
\]
such that $\pi: T_\xi \otimes b \mapsto T_{\xi\otimes b} $ for $\xi\in \E_p$ and $b \in B$.
\begin{proof}
The map that sends $\xi\in \E_p$ to  $T_{\xi\otimes 1} \in \Toepu(\E\otimes_{\max}B)$ for each $p\in P$ is a Fock covariant representation of $\E$ that commutes with the representation $b\mapsto 1 \otimes b$ of $B$ in the multiplier algebra  $\mathcal M (\Toepu(\E\otimes_{\max}B))$.
The universal property of the maximal tensor product then gives a homomorphism $\pi: \Toepu(\E)\otimes_{\max}B
\to \mathcal M(\Toepu(\E\otimes_{\max}B))$ whose image is generated by the products $T_{\xi\otimes 1} (1 \otimes b) = T_{\xi\otimes b}$ and hence is contained in $\Toepu(\E\otimes_{\max}B)$. 
\end{proof}

 \end{prop}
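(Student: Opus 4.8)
The plan is to obtain $\pi$ from the universal property of the maximal tensor product by first building, inside $\mathcal M(\Toepu(\E\otimes_{\max}B))$, a Fock covariant representation of $\E$ together with a commuting copy of $B$. Write $\phi^u\colon\E\otimes_{\max}B\to\Toepu(\E\otimes_{\max}B)$ for the universal Fock covariant representation, with components $\phi^u_p$, so that $\phi^u_p(\eta)=T_\eta$ for $\eta\in\E_p\otimes_{\max}B$. After extending $\phi^u$ to the multiplier correspondences of the fibres (or, to lighten notation, assuming $B$ unital and reducing the general case by a routine approximate-unit argument), I would set
\[
\psi_p(\xi)\coloneqq\phi^u_p(\xi\otimes1),\qquad \psi_e(a)\coloneqq\phi^u_e(a\otimes1),\qquad \rho(b)\coloneqq\phi^u_e(1\otimes b)\qquad(\xi\in\E_p,\ a\in A,\ b\in B).
\]
Since $\xi\mapsto\xi\otimes1$ is a (multiplier) morphism of product systems $\E\to\E\otimes_{\max}B$, the family $(\psi_p)_{p\in P}$ is a representation of $\E$; and since each $1\otimes b$ commutes with every $a\otimes1$ and intertwines the left actions, $\rho$ is a $*$-homomorphism whose range commutes with $\psi(\E)$ and satisfies $\psi_p(\xi)\rho(b)=\rho(b)\psi_p(\xi)=\phi^u_p(\xi\otimes b)$.

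Granting for the moment that $\psi$ is Fock covariant, it induces a $*$-homomorphism $\Toepu(\E)\to\mathcal M(\Toepu(\E\otimes_{\max}B))$ whose range commutes with $\rho(B)$, so the universal property of $\otimes_{\max}$ delivers
\[
\pi\colon\Toepu(\E)\otimes_{\max}B\to\mathcal M(\Toepu(\E\otimes_{\max}B)),\qquad\pi(T_\xi\otimes b)=\psi_p(\xi)\rho(b)=\phi^u_p(\xi\otimes b)=T_{\xi\otimes b}.
\]
Because $\{T_\xi\otimes b:\xi\in\E_p,\ p\in P,\ b\in B\}$ generates $\Toepu(\E)\otimes_{\max}B$ while $\{T_{\xi\otimes b}:\xi\in\E_p,\ p\in P,\ b\in B\}$ generates $\Toepu(\E\otimes_{\max}B)$, the image of $\pi$ equals $\Toepu(\E\otimes_{\max}B)$; in particular $\pi$ takes values in $\Toepu(\E\otimes_{\max}B)$ and is the required (surjective) homomorphism.

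The crux is to verify that $\psi$ is Fock covariant, and here I would use the criterion of \proref{pro:equiv-subalgebras}: one must show that for every finite $\cap$-closed collection $\mc$ of constructible right ideals of $P$ the restriction $\tilde\psi\restriction_{\tma(\mc)}$ factors through $\ma(\mc)$. The key structural remark is that the constructible ideal $K(\alpha)$ attached to a neutral word depends only on the word in $P$, which is unchanged on passing from $\E$ to $\E\otimes_{\max}B$; hence $\tilde\psi$ carries the spanning element of $\tma(\mc)$ for $\E$ attached to a neutral word $\alpha=(p_1,\ldots,p_{2k})$ and vectors $\xi_{p_i}\in\E_{p_i}$ to the image in $\Toepu(\E\otimes_{\max}B)$ of the spanning element of $\tma(\mc)$ for $\E\otimes_{\max}B$ attached to the same word and the vectors $\xi_{p_i}\otimes1$. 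In other words $\tilde\psi\restriction_{\tma(\mc)}$ factors as $\tma(\mc)_{\E}\xrightarrow{\theta}\tma(\mc)_{\E\otimes_{\max}B}\to\ma(\mc)_{\E\otimes_{\max}B}$, where $\theta$ is induced by $\xi\mapsto\xi\otimes1$ and the second arrow is the canonical quotient. Since that quotient kills $\ker\widetilde{\fock}\cap\Cst_{\mathrm{rep}}(\E\otimes_{\max}B)^G$, it suffices to show $\theta$ maps $\ker\widetilde{\fock}_{\E}\cap\tma(\mc)_{\E}$ into $\ker\widetilde{\fock}_{\E\otimes_{\max}B}$. This in turn follows from an identification $(\E\otimes_{\max}B)^+\cong\E^+\otimes_{\max}B$ of Fock correspondences under which the creation operator by $\xi\otimes1$ corresponds to $L_\xi\otimes\id_B$: then the Fock representation of $\E\otimes_{\max}B$ composed with $\theta$ equals $(\,\cdot\otimes\id_B)\circ\widetilde{\fock}_{\E}$, which plainly vanishes on $\ker\widetilde{\fock}_{\E}$.

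The step I expect to require the most care is precisely this Fock-module identification $(\E\otimes_{\max}B)^+\cong\E^+\otimes_{\max}B$ at the maximal norm, compatibly with creation operators. For the minimal norm it is \proref{pro:tensorY}/\proref{pro:tensorB}, resting on the facts about exterior tensor products of Hilbert modules recorded from \cite{Lan}; at the maximal norm one must check separately that the $A\otimes_{\max}B$-valued form on $\E^+\odot B$ is positive — most cleanly by representing $A\otimes_{\max}B$ faithfully and recognising the relevant sums as inner products in an interior tensor product of $\E^+$ with a Hilbert space — that its completion is again $\bigoplus_{p}(\E_p\otimes_{\max}B)$, and that $T\mapsto T\otimes\id_B$ is an isometric $*$-homomorphism $\Bound(\E^+)\to\Bound(\E^+\otimes_{\max}B)$ sending $L_\xi$ to the creation operator by $\xi\otimes1$. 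The nonunitality of $B$ is a secondary nuisance, handled throughout by replacing $\xi\otimes1$ with strict limits of $\xi\otimes e_\lambda$ along an approximate unit $(e_\lambda)$ of $B$ and passing to multiplier algebras. Finally, one should note that, unlike the minimal-norm/reduced situation of \proref{pro:tensorB}, this argument yields only surjectivity of $\pi$: injectivity would require matching the kernel of $\Cst_{\mathrm{rep}}(\E\otimes_{\max}B)\to\Toepu(\E\otimes_{\max}B)$ with one built from the kernel of $\Cst_{\mathrm{rep}}(\E)\to\Toepu(\E)$, which is not available in this generality, so the statement promises only a homomorphism onto.
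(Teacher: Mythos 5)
Your proposal is correct and follows essentially the same route as the paper: represent $\E$ inside $\Toepu(\E\otimes_{\max}B)$ via $\xi\mapsto T_{\xi\otimes 1}$, note that this commutes with the copy $b\mapsto 1\otimes b$ of $B$, and invoke the universal property of $\otimes_{\max}$. The only difference is that the paper simply asserts Fock covariance of $\xi\mapsto T_{\xi\otimes 1}$, whereas you supply the verification (via \proref{pro:equiv-subalgebras} and the identification $(\E\otimes_{\max}B)^+\cong\E^+\otimes_{\max}B$ intertwining creation operators), which is a worthwhile elaboration of exactly the step the paper leaves implicit.
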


 \begin{prop}\label{pro:faithful-action} Let $\E = (\E_p)_{p\in P}$ be a product system over a  C*-algebra $A$  and let $B$ be a C*-algebra. The following are equivalent:
\begin{enumerate}
\smallskip \item the homomorphism $\pi \colon \Toepu(\E)\otimes_{\max}B \rightarrow  \Toepu(\E\otimes_{\max}B)$ is an isomorphism;
 
\smallskip \item the composition of $\pi$ with the Fock representation of $\Toepu(\E\otimes_{\max}B)$ is faithful on  $\Toepu(\E)^G \otimes_{\max}B$.

\end{enumerate}
\begin{proof}
    (1) $\implies$(2) is obvious because the Fock representation of $\Toepu(\E\otimes_{\max}B)$ is faithful on $\Toepu(\E\otimes_{\max}B)^G$. Suppose (2) holds. Then $\pi$ restricts to an isomorphism $\Toepu(\E)^G\otimes_{\max}B \cong \Toepu(\E\otimes_{\max}B)^G$. It follows that the representation of $\E\otimes B$ in $\Toepu(\E)^G\otimes_{\max}B $ that identifies elementary tensors is Fock covariant, and hence it induces a homomorphism $\Toepu(\E\otimes_{\max}B)\to \Toepu(\E)^G\otimes_{\max}B $ by universal property. This is the inverse of $\pi$.
\end{proof}
\end{prop}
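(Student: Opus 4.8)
The plan is to treat the two implications separately, with $(2)\Rightarrow(1)$ carrying all the content. For $(1)\Rightarrow(2)$: the homomorphism $\pi$ is equivariant for the canonical gauge coactions (on generators $\pi\colon\phi^u(\xi)\otimes b\mapsto\phi^u(\xi\otimes b)$, and both sides have degree $p$ when $\xi\in\E_p$), so $\pi$ maps $\Toepu(\E)^G\otimes_{\max}B$ into $\Toepu(\E\otimes_{\max}B)^G$. If $\pi$ is an isomorphism then it is in particular injective on $\Toepu(\E)^G\otimes_{\max}B$, and composing with the Fock representation of $\Toepu(\E\otimes_{\max}B)$ --- which is faithful on $\Toepu(\E\otimes_{\max}B)^G$ by \defref{def:CMcovariance} applied to the product system $\E\otimes_{\max}B$ --- yields a faithful representation of $\Toepu(\E)^G\otimes_{\max}B$, which is~(2).

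For $(2)\Rightarrow(1)$, recall from \proref{pro:surjective} that $\pi$ is already surjective, so it suffices to produce an inverse, and we do this via the universal property of $\Toepu(\E\otimes_{\max}B)$. The first step is to upgrade~(2) to the statement that $\pi$ restricts to an \emph{isomorphism} $\Toepu(\E)^G\otimes_{\max}B\xrightarrow{\cong}\Toepu(\E\otimes_{\max}B)^G$: injectivity of this restriction is exactly~(2), since the Fock representation is faithful on $\Toepu(\E\otimes_{\max}B)^G$ and $\pi$ carries $\Toepu(\E)^G\otimes_{\max}B$ into it, while surjectivity holds because each spanning product $\phi^u(\zeta_{p_1})^*\cdots\phi^u(\zeta_{p_{2k}})$ of $\Toepu(\E\otimes_{\max}B)^G$ (neutral word $\alpha=(p_1,\dots,p_{2k})$, $\zeta_{p_j}\in\E_{p_j}\otimes_{\max}B$) is a norm limit of finite sums of products $\phi^u(\xi_{p_1}\otimes c_1)^*\cdots\phi^u(\xi_{p_{2k}}\otimes c_{2k})=\pi\big((\phi^u(\xi_{p_1})^*\cdots\phi^u(\xi_{p_{2k}}))\otimes(c_1^*c_2\cdots c_{2k})\big)$, so the image of the $\Cst$-algebra $\Toepu(\E)^G\otimes_{\max}B$ under $\pi$ is dense in, and hence (being a $*$-homomorphic image) equal to, $\Toepu(\E\otimes_{\max}B)^G$.

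The main step is to use this isomorphism to show that the representation $\rho\colon\E\otimes_{\max}B\to\Toepu(\E)\otimes_{\max}B$ determined on elementary tensors by $\xi\otimes b\mapsto\phi^u(\xi)\otimes b$ is Fock covariant. Relations (R1)--(R2) for $\rho$ follow from those for $\phi^u$ together with multiplicativity of the tensor norm, and $\rho$ is well defined on the completed fibres because on the coefficient algebra it agrees with the $*$-homomorphism $\phi^u_e\otimes\id_B\colon A\otimes_{\max}B\to\Toepu(\E)\otimes_{\max}B$. To verify Fock covariance I would apply \proref{pro:equiv-subalgebras} to the product system $\E\otimes_{\max}B$: it suffices to show that for each finite $\cap$-closed $\mc\subseteq\J$ the restriction of the corresponding homomorphism $\tilde\rho$ of $\Cst_{\mathrm{rep}}(\E\otimes_{\max}B)$ to $\tma(\mc)\subseteq\Cst_{\mathrm{rep}}(\E\otimes_{\max}B)^G$ factors through the Fock representation of $\E\otimes_{\max}B$. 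On a spanning product $\psi^u(\xi_{p_1}\otimes b_1)^*\cdots\psi^u(\xi_{p_{2k}}\otimes b_{2k})$ corresponding to a neutral word $\alpha$ with $K(\alpha)\in\mc$, the map $\tilde\rho$ returns $\big(\phi^u(\xi_{p_1})^*\cdots\phi^u(\xi_{p_{2k}})\big)\otimes\big(b_1^*b_2\cdots b_{2k}\big)\in\Toepu(\E)^G\otimes_{\max}B$, whereas the Fock representation of $\E\otimes_{\max}B$ sends the same product into $\ma(\mc)$, a subalgebra of $\Toepr(\E\otimes_{\max}B)^G\cong\Toepu(\E\otimes_{\max}B)^G$; composing with the inverse of the isomorphism from the first step returns precisely the same element $\big(\phi^u(\xi_{p_1})^*\cdots\phi^u(\xi_{p_{2k}})\big)\otimes\big(b_1^*b_2\cdots b_{2k}\big)$. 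Hence $\tilde\rho\restriction_{\tma(\mc)}$ equals the Fock representation of $\E\otimes_{\max}B$ followed by a fixed $*$-homomorphism into $\Toepu(\E)\otimes_{\max}B$, so it factors through the Fock representation, and \proref{pro:equiv-subalgebras} gives that $\rho$ is Fock covariant. Its integrated form is then a homomorphism $\Toepu(\E\otimes_{\max}B)\to\Toepu(\E)\otimes_{\max}B$ sending $\phi^u(\xi\otimes b)\mapsto\phi^u(\xi)\otimes b$, which inverts $\pi$ on generators and is therefore its two-sided inverse, giving~(1).

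I expect the Fock-covariance verification to be the main obstacle. The point is that the maximal tensor product is not functorial for inclusions of $\Cst$-algebras, so one cannot simply view $\Toepu(\E)^G\otimes_{\max}B$ as a subalgebra of $\Toepu(\E)\otimes_{\max}B$, or the building blocks $\ma(\mc)$ for the system $\E\otimes_{\max}B$ as subalgebras of $\Toepr(\E\otimes_{\max}B)$, and chase the relevant diagram formally; what pins down where the factorization lands is precisely the isomorphism of the unit fibres supplied by hypothesis~(2) in the first step. A secondary technical point is to confirm that $\delta_\E\otimes\id_B$ is a genuine coaction on $\Toepu(\E)\otimes_{\max}B$ with fixed-point algebra $\Toepu(\E)^G\otimes_{\max}B$, which follows because maximally tensoring a coaction of a discrete group with $\id_B$ yields a coaction whose canonical conditional expectation onto the fixed-point algebra is obtained by maximally tensoring the one for $\delta_\E$ with $\id_B$.
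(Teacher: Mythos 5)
Your proposal is correct and follows essentially the same route as the paper's proof: establish that $\pi$ restricts to an isomorphism of the fixed-point algebras, deduce from this that the elementary-tensor representation of $\E\otimes_{\max}B$ in $\Toepu(\E)\otimes_{\max}B$ is Fock covariant, and invoke the universal property to produce the inverse of $\pi$. You merely fill in details the paper leaves implicit (surjectivity of the restricted map, the verification of Fock covariance via \proref{pro:equiv-subalgebras}, and the embedding of $\Toepu(\E)^G\otimes_{\max}B$ via the tensored conditional expectation), all of which check out.
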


\begin{cor} \label{cor:isomorphism-string}
Let $\E = (\E_p)_{p\in P}$ be a product system over the monoid $P$ with coefficient C*-algebra $A$.  Suppose that $A$ is nuclear. Then the following are equivalent:

\begin{enumerate}
\smallskip \item the homomorphism $\pi \colon \Toepu(\E)\otimes_{\max}B \rightarrow  \Toepu(\E\otimes B)$ is an isomorphism for every C*-algebra $B$;
 
\smallskip \item $\Toepu(\E)^G$ is nuclear.

\end{enumerate}
If, in addition, $\Toepu(\E\otimes B)\cong \Toepr(\E\otimes B)$ canonically for every C*-algebra~$B$ (e.g. if $G$ is amenable), the above conditions are equivalent to:
\begin{enumerate}
    \item[\textup{(3)}] $\Toepr(\E)$ is nuclear.
\end{enumerate}
\begin{proof} If $\pi$ is an isomorphism for every C*-algebra~$B$, restriction to the fixed point algebra $\Toepu(\E)^G\otimes_{\max}B$ implies that $\Toepu(\E)^G\otimes_{\max}B\cong \Toepu(E\otimes B)^G$ canonically for every C*-algebra~$B$. Then the assumption combined with \proref{pro:tensorB} yields $$\Toepu(\E)^G\otimes_{\max}B\cong \Toepu(E\otimes B)^G\cong\Toepr(\E\otimes B)^G\cong \Toepu(\E)^G\otimes_{\min}B$$ canonically for every C*-algebra~$B$, proving that $\Toepu(\E)^G$ is nuclear. 

Conversely, suppose $\Toepu(\E)^G$ is nuclear. Then using again \proref{pro:tensorB} we have for every C*-algebra $B$ that $$\Toepu(\E)^G\otimes_{\max}B\cong\Toepu(\E)^G\otimes_{\min}B\cong \Toepr(\E)^G\otimes_{\min}B\cong \Toepr(\E\otimes B)^G\cong \Toepu(\E\otimes B)^G$$ canonically. \proref{pro:faithful-action} gives that $\pi$ is an isomorphism.
\end{proof}
\end{cor}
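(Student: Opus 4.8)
The plan is to prove \corref{cor:isomorphism-string} by leveraging \proref{pro:faithful-action} together with \proref{pro:tensorB}, treating nuclearity of $\Toepu(\E)^G$ as the pivot between the tensor-factorization statement and the statement about $\Toepr(\E)$. The key ingredient to keep in mind is that $\Toepr(\E)^G \cong \Toepu(\E)^G$ canonically (from \defref{def:CMcovariance}), and that for a product system $\E \otimes B$ the fixed-point algebra commutes with taking coefficients in $B$: by \proref{pro:tensorB} we have $\Toepr(\E \otimes_{\min} B) \cong \Toepr(\E) \otimes_{\min} B$, and one checks directly on the spanning elements $\fock(\xi_{p_1})^* \cdots \fock(\xi_{p_{2k}})$ of \remref{rem:sameFPAforallG} that this isomorphism carries $\Toepr(\E \otimes_{\min} B)^G$ onto $\Toepr(\E)^G \otimes_{\min} B$.

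For (1) $\Rightarrow$ (2): assume $\pi$ is an isomorphism for every $B$. Restricting the isomorphism $\Toepu(\E) \otimes_{\max} B \cong \Toepu(\E \otimes_{\max} B)$ to the $G$-fixed-point subalgebras and using that $\pi$ intertwines the gauge coactions (it sends $T_\xi \otimes b$ to $T_{\xi \otimes b}$, and both sides carry the coaction determined on generators by $p \mapsto \delta_p$), we get $\Toepu(\E)^G \otimes_{\max} B \cong \Toepu(\E \otimes_{\max} B)^G$ canonically. Now chain the identifications: $\Toepu(\E \otimes_{\max} B)^G \cong \Toepr(\E \otimes_{\max} B)^G$ (the Fock representation restricts to an isomorphism on fixed-point algebras), and $\Toepr(\E \otimes_{\max} B)^G$ depends only on the product system structure, not on which tensor norm was used to build it (the spanning elements of the fixed-point algebra involve only finitely many fibers at a time and their norms are pinned down by the inner products, which coincide for $\max$ and $\min$ on elementary tensors — more precisely $\Toepr(\E \otimes_\mu B)^G \cong \Toepr(\E)^G \otimes_\mu B$ and the left side is the same C*-algebra for $\mu = \min$ and $\mu = \max$ because it is a quotient of $\Toepu(\E)^G \otimes_{\max} B$ that already factors through a faithful Fock representation). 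Hence $\Toepu(\E)^G \otimes_{\max} B \cong \Toepu(\E)^G \otimes_{\min} B$ canonically for every $B$; since $A$ is nuclear this says exactly that $\Toepu(\E)^G$ is nuclear.

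For (2) $\Rightarrow$ (1): assume $\Toepu(\E)^G$ is nuclear. For any $B$, nuclearity gives $\Toepu(\E)^G \otimes_{\max} B \cong \Toepu(\E)^G \otimes_{\min} B$, and then $\cong \Toepr(\E)^G \otimes_{\min} B \cong \Toepr(\E \otimes_{\min} B)^G \cong \Toepu(\E \otimes_{\min} B)^G$ by \proref{pro:tensorB} and \defref{def:CMcovariance}. In particular the composition of $\pi$ with the Fock representation of $\Toepu(\E \otimes_{\max} B)$ is faithful on $\Toepu(\E)^G \otimes_{\max} B$ — because that composition factors as the above string of canonical isomorphisms followed by the inclusion into $\Toepr(\E \otimes_{\min} B)$, all of which are injective. (One must check that $\pi$ followed by the Fock representation does land correctly: on $\Toepu(\E)^G \otimes_{\max} B$ it sends an elementary product to its image in $\Toepr(\E \otimes_{\min} B)$ via the identifications just described — this is a matter of tracking where the generators go.) Then \proref{pro:faithful-action} yields that $\pi$ is an isomorphism for that $B$, and since $B$ was arbitrary, (1) holds.

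Finally, for the equivalence with (3) under the additional hypothesis $\Toepu(\E \otimes B) \cong \Toepr(\E \otimes B)$ canonically for all $B$ (which holds when $G$ is amenable, since then Fock covariance is automatic): take $B = \CC$ in the chain of isomorphisms produced in the proof of (2) $\Rightarrow$ (1) to get, for nuclear $\Toepu(\E)^G$, that $\Toepu(\E)^G \otimes_{\max} B \cong \Toepu(\E \otimes B) \cong \Toepr(\E \otimes B) \cong \Toepr(\E) \otimes_{\min} B$ canonically, which is nuclearity of $\Toepr(\E)$; conversely if $\Toepr(\E)$ is nuclear then so is its (corner, hence complemented) fixed-point subalgebra $\Toepr(\E)^G \cong \Toepu(\E)^G$, recovering (2). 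The main obstacle I anticipate is the bookkeeping in the $\min$-versus-$\max$ comparison at the level of fixed-point algebras: one needs that $\Toepr(\E \otimes_{\max} B)^G$ and $\Toepr(\E \otimes_{\min} B)^G$ genuinely coincide as C*-algebras via the canonical map, which requires knowing that the $G$-fixed-point algebra of the reduced Toeplitz algebra of $\E \otimes_\mu B$ is insensitive to $\mu$ — this should follow from \proref{pro:tensorB} applied with the two norms and the fact that $\Toepr(\E)^G \otimes_{\max} B$ already maps faithfully into the minimal tensor product once $A$ (equivalently $\Toepr(\E)^G$, in the relevant cases) interacts well with $B$, but it is the step that most needs care.
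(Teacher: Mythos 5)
Your proposal follows the same route as the paper: restrict $\pi$ to the gauge-fixed-point algebras, pass from $\Toepu(\cdot)^G$ to $\Toepr(\cdot)^G$ via the canonical isomorphism of \defref{def:CMcovariance}, identify $\Toepr(\E\otimes_{\min}B)^G$ with $\Toepr(\E)^G\otimes_{\min}B$ using \proref{pro:tensorB}, and close the loop with \proref{pro:faithful-action}. The overall logic is sound and matches the paper's proof.

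The one place where your write-up goes astray is precisely the step you flag as "the step that most needs care": the claim that $\Toepr(\E\otimes_{\max}B)^G$ and $\Toepr(\E\otimes_{\min}B)^G$ coincide. Your justifications for it are not valid as stated --- "the norms are pinned down by the inner products, which coincide for max and min on elementary tensors" does not control the norm of a general element of $\E_p\odot B$, the appeal to a "$\mu=\max$ version" of \proref{pro:tensorB} is unsupported (the paper only proves the $\min$ case, and a $\max$ analogue is false in general), and "because it is a quotient of $\Toepu(\E)^G\otimes_{\max}B$ that already factors through a faithful Fock representation" is circular. The intended resolution is much simpler and is exactly why the corollary carries the standing hypothesis that $A$ is nuclear: since $A\otimes_{\max}B=A\otimes_{\min}B$, the norm on each $\E_p\odot B$ (being $\|\langle\zeta,\zeta\rangle\|_{A\otimes_\mu B}^{1/2}$) is the same for both choices, so $\E\otimes_{\max}B$ and $\E\otimes_{\min}B$ are literally the same product system and there is nothing to compare. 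Once you invoke this, your chains of isomorphisms are exactly the paper's. Two smaller slips in your treatment of (3): the displayed isomorphism $\Toepu(\E)^G\otimes_{\max}B\cong\Toepu(\E\otimes B)$ should not carry the superscript $G$ on the left (you want the full algebras there, with $\Toepr(\E)\cong\Toepu(\E)$ obtained from the extra hypothesis at $B=\CC$); and the fixed-point algebra is not a corner of $\Toepr(\E)$ --- the correct statement, which the paper uses in \thmref{thm:Toep-vs-coefficient1}, is that it is the range of the conditional expectation associated to the coaction, and nuclearity passes to ranges of conditional expectations.
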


\begin{rem}  Notice that when $G$ is amenable, the equivalence between conditions $(2)$ and~$(3)$ in \corref{cor:isomorphism-string} follows from a general result on C*-algebras associated to Fell bundles, see \cite[Proposition~25.10]{Exel:Partial_dynamical}. Under the assumption that $A$ is nuclear, Condition $(1)$ in \corref{cor:isomorphism-string} always holds if $\E$ is a single correspondence or, more generally, a compactly aligned product system over a right LCM monoid. We do not know if \corref{cor:isomorphism-string}$(1)$ automatically holds for arbitrary product systems (over group embeddable monoids).
    
\end{rem}

\section{Independence and the fixed point algebra} \label{sec:independenceandps}
 Recall from \cite[Definition~2.26]{Li:Semigroup_amenability} that  a  monoid~$P$ is said to satisfy independence 
0 when no constructible right ideal of $P$ can be written as a finite union of proper sub-ideals. Equivalently, $P$ satisfies independence if and only if  the  projections $\{\chi_S\mid S\in\J\}$ in $\mathcal{B}(\ell^2(P))$ corresponding to characteristic functions on constructible right ideals of~$P$ form a linearly independent set. In this case the Fock covariance condition for the canonical product system over~$P$ is equivalent to a condition involving single right ideals in~$\J$, rather than subcollections of them. Motivated by this observation, we consider product systems $\E$ such that 
\begin{equation}\label{eqn:independenceps1}
\ma(\mc)=\bigoplus_{\substack{S\in\mc}}\mi(S)
\qquad \text{ for every finite $\cap$-\nb closed }\mc\subset\J.
\end{equation}

\begin{prop} \label{pro:covariance-independence} Let $P$ be a submonoid of a group and let $\E$ be a product system over~$P$.   If a representation $\psi$ of~$\E$ in a $\Cst$\nb-algebra~$B$ is Fock covariant, then  the restriction of~$\tilde{\psi}$ to~$\tmi(S)$ factors through the Fock representation $\tilde{\fock}$ for every~$S\in\J$.
The converse holds if $\E$ satisfies \eqref{eqn:independenceps1}.
\end{prop}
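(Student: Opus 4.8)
The plan is to read off both implications from the characterization of Fock covariance in \proref{pro:equiv-subalgebras}, feeding it the singleton collections $\mc=\{S\}$. Note first that for any $S\in\J$ the collection $\{S\}$ is finite and $\cap$-closed, and that unwinding the definitions of \lemref{lem:single-ideals-sub} and \lemref{lem:direct-limit} gives the routine identifications $\tma(\{S\})=\tmi(S)$ and $\ma(\{S\})=\tilde\fock(\tma(\{S\}))=\mi(S)$. For the forward direction I would then simply invoke \proref{pro:equiv-subalgebras} with $\mc=\{S\}$: if $\psi$ is Fock covariant, it tells us that $\tilde\psi\restriction_{\tma(\{S\})}=\tilde\psi\restriction_{\tmi(S)}$ factors through $\ma(\{S\})=\tilde\fock(\tmi(S))$, which is precisely the assertion that $\tilde\psi\restriction_{\tmi(S)}$ factors through the Fock representation $\tilde\fock$. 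This costs nothing beyond \proref{pro:equiv-subalgebras} and uses neither independence nor \eqref{eqn:independenceps1}.

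For the converse I would assume \eqref{eqn:independenceps1} together with the hypothesis that, for each $S\in\J$, the restriction $\tilde\psi\restriction_{\tmi(S)}$ factors through $\tilde\fock\restriction_{\tmi(S)}$, say via a homomorphism $\sigma_S$ on $\mi(S)$ with $\tilde\psi\restriction_{\tmi(S)}=\sigma_S\circ(\tilde\fock\restriction_{\tmi(S)})$. By \proref{pro:equiv-subalgebras} it is enough to check that for every finite $\cap$-closed $\mc\subset\J$ one has $\ker(\tilde\fock\restriction_{\tma(\mc)})\subseteq\ker(\tilde\psi\restriction_{\tma(\mc)})$. Given $b\in\tma(\mc)$ with $\tilde\fock(b)=0$, I would use that $\tma(\mc)$ is by definition the linear span of $\bigcup_{S\in\mc}\tmi(S)$ to write $b=\sum_{S\in\mc}b_S$ with $b_S\in\tmi(S)$; applying $\tilde\fock$ gives $\sum_{S\in\mc}\tilde\fock(b_S)=0$ with $\tilde\fock(b_S)\in\mi(S)$. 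At this point \eqref{eqn:independenceps1} does its only job: it says exactly that $\ma(\mc)$ is the direct sum $\bigoplus_{S\in\mc}\mi(S)$, which forces $\tilde\fock(b_S)=0$ for every $S\in\mc$. Applying $\sigma_S$ then gives $\tilde\psi(b_S)=\sigma_S(0)=0$, and summing, $\tilde\psi(b)=0$. Hence $\tilde\psi\restriction_{\tma(\mc)}$ factors through $\ma(\mc)$ for every $\mc$, and \proref{pro:equiv-subalgebras} yields that $\psi$ is Fock covariant.

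The argument is really just a bookkeeping reduction to \proref{pro:equiv-subalgebras}, so I do not anticipate a serious obstacle. The one point I would be careful about is that a decomposition $b=\sum_{S\in\mc}b_S$ need not be unique at the level of $\tma(\mc)$, since we do not assume the analogue of \eqref{eqn:independenceps1} upstairs in $\Fl^G$. This is harmless: one fixes an arbitrary decomposition and transports it through $\tilde\fock$, where \eqref{eqn:independenceps1} supplies the uniqueness that forces each $\tilde\fock(b_S)$ to vanish. I would also state the routine identifications above cleanly, since the whole reduction hinges on $\tma(\{S\})=\tmi(S)$ and $\ma(\{S\})=\mi(S)$.
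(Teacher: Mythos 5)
Your proposal is correct and follows essentially the same route as the paper: both directions reduce to \proref{pro:equiv-subalgebras}, the forward one via the singleton collection $\{S\}$, and the converse by decomposing $b=\sum_{S\in\mc}b_S$, using \eqref{eqn:independenceps1} to force $\tilde\fock(b_S)=0$ for each $S$, and then applying the hypothesis on each $\tmi(S)$. Your remark about non-uniqueness of the decomposition upstairs is a fair point of care, but the argument is the same as the paper's.
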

\begin{proof} The first assertion follows easily from \proref{pro:equiv-subalgebras} with $\mc = \{S\}$. For the converse, assume \eqref{eqn:independenceps1} holds and let  $\psi$ be a representation of~$\E$ in~$B$ such that $\tilde{\psi}\restriction_{\tmi(S)}$ factors through the Fock representation for every~$S\in\J$. To show that~$\psi$ is Fock covariant, by \proref{pro:equiv-subalgebras} all we need to prove is that $\tilde{\psi}\restriction_{\tma(\mc)}$ factors through the Fock representation for every finite $\cap$\nb-closed subcollection $\mc\subseteq\J$. Specifically, we need to show that  $\tilde{\psi}(b)=0$ for each $b\in \tma(\mc)$ such that $\tilde{\fock}(b)=0$. By \lemref{lem:direct-limit}, we can write $b=\sum_{\substack{S\in\mc}}b_S$ with $b_S\in\tmi(S)$ for each~$S\in\mc$. It follows that 
$$0=\tilde{\fock}(b)=\sum_{\substack{S\in\mc}}\tilde{\fock}(b_S).$$ 
But then $\tilde{\fock}(b_S)=0$ for each $S\in\mc$ by \eqref{eqn:independenceps1}. By assumption, $\tilde{\psi}\restriction_{\tmi(S)}$ factors through $\tilde{\fock}$ and thus $\tilde{\psi}(b_S)=0$ which proves that $\tilde{\psi}(b)=0$,  as desired.
\end{proof}

\subsection{Sufficient conditions and examples}
Next we observe that the property \eqref{eqn:independenceps1} for product systems is consistent with independence for monoids.

\begin{prop}\label{pro:consistentindependence} \cite[see Proposition~5.6.22]{CELY} Let $P$ be a submonoid of a group $G$. Then the canonical product system over~$P$ with one-dimensional fibres satisfies \eqref{eqn:independenceps1} if and only if~$P$ satisfies independence.

\begin{prop}\label{pro:suff-conditions} Let $P$ be a submonoid of a group $G$ and assume that $P$ satisfies independence. Suppose further that the family $\J$ of constructible right ideals of~$P$ has the following property: given $S_1,S_2\in \J$ with $S_1\subsetneq S_2$ and $s_1\in S_1$, there exists $s_2\in S_2\setminus S_1$ such that $s_2\leq s_1$. Then every product system over~$P$ satisfies \eqref{eqn:independenceps1}.
\end{prop}
\begin{proof} First, we claim that if $S$, $S_i$, $i=1,\ldots, n$ are in~$\J$, and $S\setminus(\bigcup_{\substack{j}}S_j)\neq\emptyset$, then for every $s\in \bigcup^n_{\substack{j=1}}S\cap S_j$, there exists $\bar{s}\in S\setminus(\bigcup^n_{\substack{j=1}}S_j)$ with $\bar{s}\leq s$. We will prove the claim by induction on~$n$. This is just the hypothesis on the family~$\J$ of ideals of~$P$ if $n=1$. Assume the claim holds for~$n\geq 1$ fixed. Let $S$, $S_j$, $j=1,\ldots,n+1$ be constructible right ideals of~$P$ and suppose that $$S\setminus\big(\bigcup^{n+1}_{\substack{j=1}}S_j\big)\neq\emptyset.$$ Take $s\in \bigcup^{n+1}_{\substack{j=1}}S\cap S_j $ and let $i\in\{1,\ldots,n+1\}$ be such that $s\in S\cap S_i$. Let $s'\in S\setminus(S\cap S_i)$ with $s'\leq s$. If $s'\in S\setminus(\bigcup^{n+1}_{\substack{j=1}}S_j)$ we are done. Otherwise, $s'\in \bigcup_{\substack{i\neq j}}S\cap S_j$ and we can apply the induction hypothesis to find $\bar{s}\in S\setminus\bigcup_{\substack{j\neq i}} S_j$ with $\bar{s}\leq s'\leq s$.  Because $s'\not\in S_i$ and $S_i$ is a right ideal, $\bar{s}$ cannot lie in~$S_i$ as well. Thus $\bar{s}\leq s$ with $\bar{s}$ in $S\setminus(\bigcup^{n+1}_{\substack{j=1}}S_j)$. This completes the proof of the claim.

Now let~$b=\sum_{\substack{S\in\mc}}b_S\in \tma(\mc)$ with $\tilde{\fock}(b)=0$ in $\mathcal{B}(\E^+)$. We claim that $\tilde{\fock}(b_S)=0$ for every~$S\in\mc$. Let $\bar{S}$ be a maximal element of $\mc$ and choose  $\bar s \in \bar{S} \setminus \bigcup_{\substack{\bar{S}\neq S\in\mc}}S$. It follows that, for all $S\in\mc\setminus\{\bar{S}\}$, $\tilde{\fock}(b_S)=0$ on the direct summand $\E_{\bar{s}}\subseteq \E^+$ because $\bar{s}\not\in S$. Thus $\tilde{\fock}(b_{\bar{S}})$ must vanish on~$\E_{\bar{s}}$ whenever $\bar{s}\not\in\bigcup_{\substack{\bar{S}\neq S\in\mc}}S$. By the first part of the proof, if $s\in \bar{S}\cap \bigcup_{\substack{\bar{S}\neq S\in\mc}}S$, we can find $\bar{s}\in  \bar{S}\setminus\big(\bar{S}\cap \bigcup_{\substack{\bar{S}\neq S\in\mc}}S\big)$ with $\bar{s}\leq s$. Using the correspondence isomorphism $\E_{s}\cong\E_{\bar{s}}\otimes\E_{\bar{s}^{-1}s}$, we deduce that $\tilde{\fock}(b_{\bar{S}})=0$ on~$\E_s$. So $\tilde{\fock}(b_{\bar{S}})=0$. Proceeding with this argument, we conclude that $\tilde{\fock}(b_S)=0$ for each $S\in\mc$. So $\E$ satisfies independence as asserted.
\end{proof}
\end{prop}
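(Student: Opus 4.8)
The content of \eqref{eqn:independenceps1} is the internal direct-sum identity $\ma(\mc)=\bigoplus_{S\in\mc}\mi(S)$ for every finite $\cap$-closed $\mc\subseteq\J$. By the $\Toepr(\E)^G$-analogue of \lemref{lem:direct-limit}(1) the closed span $\ma(\mc)$ already equals the linear span of the $\mi(S)$, so only linear independence is at stake; passing to $\Fl$ via $\tilde\fock$ (so $\mi(S)=\tilde\fock(\tmi(S))$, $\ma(\mc)=\tilde\fock(\tma(\mc))$, and every $b\in\tma(\mc)$ is a finite sum $\sum_{S\in\mc}b_S$ with $b_S\in\tmi(S)$ by \lemref{lem:direct-limit}), it suffices to show that $\sum_{S\in\mc}\tilde\fock(b_S)=0$ in $\mathcal B(\E^+)$ forces $\tilde\fock(b_S)=0$ for every $S$. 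I would argue by induction on $|\mc|$, peeling off a maximal element $\bar S$ of $\mc$ at each stage; $\mc\setminus\{\bar S\}$ stays $\cap$-closed because, by maximality of $\bar S$, no intersection of its members can equal $\bar S$.

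The proof rests on two facts I would establish first. \emph{(a) A combinatorial lemma.} Using the hypothesis on $\J$ and induction on $n$: if $S,S_1,\dots,S_n\in\J$ with $S\not\subseteq\bigcup_jS_j$, then for every $s\in S\cap\bigcup_jS_j$ there is $\bar s\in S\setminus\bigcup_jS_j$ with $\bar s\le s$ (i.e.\ $s\in\bar sP$). For the inductive step, pick $i$ with $s\in S\cap S_i$; apply the hypothesis to $S\cap S_i\subsetneq S$ to get $s'\in S\setminus S_i$ with $s'\le s$; if $s'$ still lies in $\bigcup_{j\neq i}S_j$, apply the inductive hypothesis to $S$ and $\{S_j:j\neq i\}$ to get $\bar s\le s'$ with $\bar s\notin\bigcup_{j\neq i}S_j$; since $s'\notin S_i$, $S_i$ is a right ideal and $s'\in\bar sP$, also $\bar s\notin S_i$, hence $\bar s\notin\bigcup_jS_j$ and $\bar s\le s$. \emph{(b) A localization fact.} For a neutral word $\alpha$ with $K(\alpha)=S$, the operator $T=\fock(\xi_{p_1})^*\fock(\xi_{p_2})\cdots\fock(\xi_{p_{2k-1}})^*\fock(\xi_{p_{2k}})\in\mathcal B(\E^+)$ maps each fibre $\E_r$ into itself, vanishes on $\E_r$ for every $r\notin S$, and --- crucially --- for $\bar s\in S$ and $t\in P$ satisfies $T\restriction_{\E_{\bar st}}=(T\restriction_{\E_{\bar s}})\otimes\id_{\E_t}$ under $\E_{\bar st}\cong\E_{\bar s}\otimes_A\E_t$. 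The reason is that $\bar s\in S=K(\alpha)$ forces every intermediate fibre index appearing in the computation of $T\restriction_{\E_{\bar s}}$ to stay in $P$, so each annihilation step is an honest inner product on the $\E_{\bar s}$-component and never reaches into the $\E_t$-tail; a fibrewise computation gives the factorization, which passes to closed spans because $R\mapsto R\otimes\id_{\E_t}$ is a $*$-homomorphism, hence norm-continuous. Thus for $b_S\in\tmi(S)$: $\tilde\fock(b_S)$ preserves each $\E_r$, vanishes on $\E_r$ for $r\notin S$, and $\tilde\fock(b_S)\restriction_{\E_{\bar s}}=0$ implies $\tilde\fock(b_S)\restriction_{\E_{\bar st}}=0$ whenever $\bar s\in S$, $t\in P$.

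Now the induction. By independence of $P$, the maximal element $\bar S$ is not a finite union of proper sub-ideals, hence $\bar S\setminus\bigcup_{\bar S\neq S\in\mc}S\neq\emptyset$; for any $\bar s$ in this set, restricting $\sum_{S\in\mc}\tilde\fock(b_S)=0$ to $\E_{\bar s}$ and using that $\tilde\fock(b_S)\restriction_{\E_{\bar s}}=0$ for $S\neq\bar S$ (as $\bar s\notin S$) gives $\tilde\fock(b_{\bar S})\restriction_{\E_{\bar s}}=0$. For arbitrary $s\in\bar S$: if $s\notin\bigcup_{\bar S\neq S\in\mc}S$ this is already covered; if $s\in\bar S\cap\bigcup_{\bar S\neq S\in\mc}S$, fact (a) yields $\bar s\in\bar S\setminus\bigcup_{\bar S\neq S\in\mc}S$ with $\bar s\le s$, so $\tilde\fock(b_{\bar S})\restriction_{\E_{\bar s}}=0$ and fact (b) upgrades this to $\tilde\fock(b_{\bar S})\restriction_{\E_s}=0$ via $\E_s\cong\E_{\bar s}\otimes_A\E_{\bar s^{-1}s}$; and for $s\notin\bar S$ one has $\tilde\fock(b_{\bar S})\restriction_{\E_s}=0$ anyway. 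Since $\tilde\fock(b_{\bar S})$ preserves every fibre, $\tilde\fock(b_{\bar S})=0$, so $\sum_{S\in\mc\setminus\{\bar S\}}\tilde\fock(b_S)=0$, and the induction hypothesis (applicable since $\mc\setminus\{\bar S\}$ is finite and $\cap$-closed) completes the argument.

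I expect the localization fact (b) to be the main obstacle: one must check carefully that membership $\bar s\in K(\alpha)$ is exactly what keeps every intermediate fibre index inside $P$, so that $T$ genuinely decomposes as $(T\restriction_{\E_{\bar s}})\otimes\id_{\E_t}$ and not merely as an operator on the larger fibre $\E_{\bar st}$, and that this identity survives the extension from spanning operators to arbitrary elements of $\tmi(S)$ by continuity. The combinatorial lemma (a), while it is where the hypothesis on $\J$ is used, is a routine double induction once its statement is isolated, and independence enters only to supply the seed point $\bar s\in\bar S\setminus\bigcup_{S\neq\bar S}S$.
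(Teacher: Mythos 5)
Your proof is correct and follows essentially the same route as the paper: the same combinatorial lemma on constructible ideals (with the same double induction), the same use of independence to produce a seed point $\bar s\in\bar S\setminus\bigcup_{S\neq\bar S}S$, and the same fibrewise localization via $\E_s\cong\E_{\bar s}\otimes_A\E_{\bar s^{-1}s}$ to propagate the vanishing of $\tilde\fock(b_{\bar S})$ from $\E_{\bar s}$ to all of $\bigoplus_{s\in\bar S}\E_s$. Your fact (b) and the explicit induction on $|\mc|$ merely spell out steps the paper's proof treats implicitly ("Using the correspondence isomorphism\dots" and "Proceeding with this argument"), and they are spelled out correctly.
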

 
\begin{cor} Let $P\subseteq G$ and suppose that $P$ is right LCM. Then every product system over $P$ satisfies \eqref{eqn:independenceps1}. 
\begin{proof} Let $P$ be a right LCM submonoid of a group $G$. Then every right ideal of~$P$ is principal and $P$ satisfies independence by \cite[Lemma~5.6.31]{CELY}. Let $S_1$ and $S_2$ 
 be nonempty right ideals of~$P$ such that $S_1\subsetneq S_2$. We can write $S_1=pP$ and $S_2=qP$ with $p,q\in P$ such that $q < p$.  For every $s_1 \in S_1$ we may choose $s_2 \coloneqq q \in S_2 \setminus S_1$, which clearly satisfies $s_2 < p \leq s_1$. 
The result follows by Proposition~\ref{pro:suff-conditions}.
\end{proof}
\end{cor}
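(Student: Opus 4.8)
The plan is to deduce this directly from \proref{pro:suff-conditions}, so the work reduces to verifying its two hypotheses for a right LCM submonoid $P\subseteq G$: that $P$ satisfies independence, and that the family $\J$ of constructible right ideals has the stated order-theoretic property. Once both are in hand, the conclusion that every product system over $P$ satisfies \eqref{eqn:independenceps1} is immediate.

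For the independence hypothesis, I would first recall that in a right LCM monoid the intersection of two principal right ideals is again principal (or empty), so by induction every nonempty constructible right ideal of $P$ has the form $pP$ for some $p\in P$. A monoid whose constructible right ideals are all principal satisfies independence; this is \cite[Lemma~5.6.31]{CELY}. This disposes of the independence assumption in \proref{pro:suff-conditions} without further effort.

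For the order-theoretic hypothesis, take $S_1,S_2\in\J$ with $S_1\subsetneq S_2$ and write $S_1=pP$, $S_2=qP$ using the previous paragraph. Proper inclusion means $pP\subseteq qP$ but $qP\not\subseteq pP$: the first inclusion gives $p\in qP$, i.e. $q\leq p$ in the left-divisibility order, while the failure of the second gives $q\notin pP=S_1$. Now for an arbitrary $s_1\in S_1=pP$ we have $p\leq s_1$, hence $q\leq s_1$ by transitivity, so $s_2:=q$ lies in $S_2\setminus S_1$ and satisfies $s_2\leq s_1$, exactly as \proref{pro:suff-conditions} requires.

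With both hypotheses verified, \proref{pro:suff-conditions} yields the claim. There is no substantive obstacle here; the only thing to be careful about is the direction of the divisibility order — a principal right ideal $pP$ consists of the multiples of $p$, so larger elements sit inside smaller ideals — together with the observation that proper inclusion of the principal ideals forces $q\notin pP$, which is where the hypothesis $s_2\in S_2\setminus S_1$ comes from.
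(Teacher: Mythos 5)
Your proposal is correct and follows essentially the same route as the paper: reduce to Proposition~\ref{pro:suff-conditions}, invoke principality of constructible right ideals and \cite[Lemma~5.6.31]{CELY} for independence, and take $s_2=q$ where $S_1=pP\subsetneq S_2=qP$. Your explicit justification that $q\notin pP$ (which the paper leaves as "clearly") is a welcome bit of extra care, but the argument is the same.
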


\subsection{Exactness and nuclearity in terms of subalgebras} 
Suppose that $P$ is a submonoid of a group $G$ and let $S$ be a constructible right ideal in $P$. The family 
\[
\J_{\subset S}\coloneqq\{R\in \J\mid R\subset S\}
\] of all constructible right ideals contained in $S$ is $\cap$-closed but in most cases is infinite. Since $\J_{\subset S}$ is relevant to the study of  nuclearity and exactness of $\Toepr(\E)^G$,  we introduce in \thmref{thm:building-subalgebras} a stronger version of condition \eqref{eqn:independenceps1} that takes it into account. Let $\E$ be a product system over $P$  and define
\[
\ma(\J_{\subset S})\coloneqq \clsp\{b\in \Toepr(\E)^G\mid b\in \mi(R), R\in \J_{\subset S}\}.
\] 
Even when $\J_{\subset S}$ is infinite we have 
\[
\ma(\J_{\subset S})= \lim_{\mc\Subset \J_{\subset S}} \ma(\mc) .
\] 
\begin{lem}\label{lem:Cstar-sum-subset} Let $P$ be a submonoid of a group $G$, and let $\E$ be a product system over~$P$. 
Then $\ma(\J_{\subset S})$ and $\ma(\J_{\subset S}\setminus \{S\})$ are ideals in $\Toepr(\E)^G$, and the sequence
\begin{equation}\label{eqn:exactCS}
0\to \ma(\J_{\subset S}\setminus\{S\})\to\ma(\J_{\subset S})\to\mi(S)/(\mi(S)\cap\ma(\J_{\subset S}\setminus\{S\}))\to 0
\end{equation}
is exact.
\end{lem}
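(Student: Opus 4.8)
The plan is to reduce \lemref{lem:Cstar-sum-subset} to the finite case already established in \lemref{lem:direct-limit}(2) and (4) together with a continuity argument for the directed limit $\ma(\J_{\subset S})=\lim_{\mc\Subset\J_{\subset S}}\ma(\mc)$. First I would observe that $\ma(\J_{\subset S})$ is a $\Cst$\nb-subalgebra of $\Toepr(\E)^G$: it is the closure of the increasing union $\bigcup_{\mc\Subset\J_{\subset S}}\ma(\mc)$ of $\Cst$\nb-algebras (each $\ma(\mc)$ being a $\Cst$\nb-algebra by the reduced analogue of \lemref{lem:direct-limit}(1)), so it is closed under the algebraic operations, hence a $\Cst$\nb-algebra. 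That it is an \emph{ideal} in $\Toepr(\E)^G$ follows from the reduced version of \lemref{lem:single-ideals-sub}(2): for $R\in\J_{\subset S}$ and a neutral-word spanning element $c$ of $\Toepr(\E)^G$ corresponding to a word $\beta$ with $K(\beta)=T$, the product $\mi(R)\,\mi(T)\subseteq \mi(R\cap T)$, and $R\cap T\subseteq R\subseteq S$, so $R\cap T\in\J_{\subset S}$; since the neutral-word spanning elements are dense in $\Toepr(\E)^G$ (Remark~\ref{rem:sameFPAforallG}), continuity gives $\ma(\J_{\subset S})\,\Toepr(\E)^G\subseteq \ma(\J_{\subset S})$ and symmetrically on the other side. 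The same argument applied to $\J_{\subset S}\setminus\{S\}$ shows $\ma(\J_{\subset S}\setminus\{S\})$ is an ideal in $\Toepr(\E)^G$, using that $R\cap T$ cannot equal $S$ when $R\subsetneq S$; and it is an ideal in $\ma(\J_{\subset S})$ in particular.

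Next I would establish exactness of the sequence. The map $\ma(\J_{\subset S})\to \mi(S)/(\mi(S)\cap\ma(\J_{\subset S}\setminus\{S\}))$ is the composite of the quotient map $\ma(\J_{\subset S})\to \ma(\J_{\subset S})/\ma(\J_{\subset S}\setminus\{S\})$ with the canonical isomorphism $\ma(\J_{\subset S})/\ma(\J_{\subset S}\setminus\{S\})\cong \mi(S)/(\mi(S)\cap\ma(\J_{\subset S}\setminus\{S\}))$. The latter isomorphism is the standard one $(I+\mi(S))/I\cong \mi(S)/(\mi(S)\cap I)$ for the ideal $I:=\ma(\J_{\subset S}\setminus\{S\})$, valid once one knows $\ma(\J_{\subset S})=I+\mi(S)$ as a (closed) sum; but $\ma(\J_{\subset S})$ is by definition the closed span of $\{\mi(R):R\in\J_{\subset S}\}$, which is $I+\mi(S)$ since every $R\in\J_{\subset S}$ is either $S$ or lies in $\J_{\subset S}\setminus\{S\}$, and this sum is already closed by the reduced analogue of \lemref{lem:direct-limit}(1) applied to any finite $\cap$\nb-closed $\mc$ with $\mi(S)\subseteq\ma(\mc)\subseteq\ma(\J_{\subset S})$, combined with the fact that $\ma(\J_{\subset S})$ is the increasing union of such $\ma(\mc)$ and that a sum of an ideal and a $\Cst$\nb-subalgebra in a $\Cst$\nb-algebra is closed (\cite[Theorem~3.1.7]{murphy1990c}). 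Exactness on the left and in the middle is then formal.

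The step I expect to require the most care is the claim that $\ma(\J_{\subset S})=I+\mi(S)$ with the sum \emph{already closed}, since $\J_{\subset S}$ is typically infinite and one cannot simply invoke the finite-collection lemma directly. The honest way to handle it: given $b\in\ma(\J_{\subset S})$, write $b=\lim b_n$ with each $b_n\in \ma(\mc_n)$ for some finite $\cap$\nb-closed $\mc_n\subseteq\J_{\subset S}$; then by the finite case $b_n=c_n+d_n$ with $c_n\in \ma(\mc_n\setminus\{S\})\subseteq I$ and $d_n\in\mi(S)$, and one controls the decomposition using that the quotient norm on $\ma(\mc_n)/\ma(\mc_n\setminus\{S\})$ agrees with the one induced from $\ma(\J_{\subset S})/I$ — so $\|d_n+I\|$ is Cauchy, and after adjusting representatives $d_n\in\mi(S)$ can be taken Cauchy, forcing $c_n=b_n-d_n$ Cauchy as well, with limits in $\mi(S)$ and $I$ respectively. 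This compatibility of quotient norms along the directed system is precisely the content one borrows from \cite[Lemma~1.3]{Adji-Laca-Nilsen-Raeburn} (as used in \proref{pro:equiv-subalgebras}), and once it is in hand the exactness of \eqref{eqn:exactCS} follows.
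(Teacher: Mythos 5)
Your argument is correct and takes essentially the same route as the paper, whose proof is a one-line reduction to \lemref{lem:direct-limit}(4) using that $S$ is the largest element of $\J_{\subset S}$; your first two paragraphs (the ideal property of $\ma(\J_{\subset S})$ and $\ma(\J_{\subset S}\setminus\{S\})$ via $\mi(R)\mi(T)\subseteq\mi(R\cap T)$, followed by $\ma(\J_{\subset S})=\mi(S)+I$ and the second isomorphism theorem) are exactly what that reduction amounts to.

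One caveat about your final paragraph: the worry that the infinitude of $\J_{\subset S}$ blocks the closedness of $\mi(S)+I$ is unfounded, and the Cauchy-sequence repair you sketch is both unnecessary and incomplete as written. The decisive fact is \cite[Theorem~3.1.7]{murphy1990c}: if $I$ is a \emph{closed} ideal of a $\Cst$\nb-algebra and $B$ is a $\Cst$\nb-subalgebra, then $B+I$ is automatically closed; no finiteness of an index set enters. Since your first paragraph already establishes that $I=\ma(\J_{\subset S}\setminus\{S\})$ is a closed ideal of $\Toepr(\E)^G$, the set $\mi(S)+I$ is a closed subalgebra containing every $\mi(R)$ with $R\in\J_{\subset S}$ and contained in $\ma(\J_{\subset S})$, hence equal to it, and \eqref{eqn:exactCS} follows. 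By contrast, the assertion in your last paragraph that the quotient norm on $\ma(\mc_n)/\ma(\mc_n\setminus\{S\})$ agrees with the one induced from $\ma(\J_{\subset S})/I$ would itself require proof: it amounts to $I\cap\ma(\mc_n)=\ma(\mc_n\setminus\{S\})$, which is an independence-type statement not available in general and not what \cite[Lemma~1.3]{Adji-Laca-Nilsen-Raeburn} provides. Delete that paragraph and rest on the Murphy citation you already made in the second one.
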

\begin{proof} Since the constructible right ideal $S$ is  maximal (in fact largest)  in $\J_{\subset S}$, the statement follows as in part (4) of \lemref{lem:direct-limit}.
\end{proof}

\begin{thm}\label{thm:building-subalgebras} Let $P$ be a submonoid of a group $G$ and let $\E$ be a product system over~$P$. Suppose that for each constructible right ideal $S\in \J$ the $\Cst$\nb-algebra $\mi(S)$ is nuclear. Then the fixed point algebra $\Toepr(\E)^G$ of the gauge coaction is nuclear. The converse holds if  
\begin{equation}\label{eqn:moreindependent}
\mi(S)\cap\ma(\J_{\subset S}\setminus\{S\}) = (0) \qquad \text{for all } S\in \J.
\end{equation}

\begin{proof} Suppose first that $\mi(S)$ is nuclear for every $S\in \J.$ An induction argument on $|\mc|$ combined with \proref{pro:Cstar-sum} shows that $\ma(\mc)$ is nuclear for every  finite $\cap$\nb-closed collection $\mc$ of constructible right ideals of $P$. Since $\Toepr(\E)^G=\lim_{\mc}\ma(\mc)$, it follows that $\Toepr(\E)^G$ is nuclear.

Assume now that \eqref{eqn:moreindependent} holds, so that the  exact sequence \eqref{eqn:exactCS} reads
\[0\to \ma(\J_{\subset S}\setminus\{S\})\to\ma(\J_{\subset S})\to\mi(S)\to 0.
\]
If $\Toepr(\E)^G$ is nuclear, then $\ma(\J_{\subset S})$ is nuclear because it is an ideal in $\Toepr(\E)^G$;
but then  $\mi(S)$ is nuclear because it is a quotient of $\ma(\J_{\subset S})$ by \lemref{lem:Cstar-sum-subset}.
    \end{proof}
    
\end{thm}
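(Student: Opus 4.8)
The plan is to prove both implications of \thmref{thm:building-subalgebras} by exploiting the filtration established in \lemref{lem:direct-limit} (in its $\Toepr(\E)^G$ version) together with the standard permanence properties of nuclearity: nuclearity passes to ideals, to quotients, and is preserved by extensions, and it is compatible with inductive limits.

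For the forward direction, suppose $\mi(S)$ is nuclear for every $S\in\J$. The plan is to show by induction on $|\mc|$ that $\ma(\mc)$ is nuclear for every finite $\cap$\nb-closed $\mc\subset\J$. The base case $|\mc|=1$ is the hypothesis. For the inductive step, pick a maximal element $\bar S$ of $\mc$, set $\mc'=\mc\setminus\{\bar S\}$, and invoke part (4) of \lemref{lem:direct-limit} (at the level of $\Toepr(\E)^G$) to get the short exact sequence $0\to\ma(\mc')\to\ma(\mc)\to\mi(\bar S)/(\mi(\bar S)\cap\ma(\mc'))\to 0$. The ideal $\ma(\mc')$ is nuclear by the induction hypothesis, and the quotient term is a quotient of the nuclear algebra $\mi(\bar S)$, hence nuclear; therefore $\ma(\mc)$ is nuclear as an extension of nuclear algebras. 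Finally, since $\Toepr(\E)^G=\lim_{\mc}\ma(\mc)$ is an inductive limit of the nuclear algebras $\ma(\mc)$, it is nuclear.

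For the converse, assume \eqref{eqn:moreindependent}, so that the exact sequence \eqref{eqn:exactCS} from \lemref{lem:Cstar-sum-subset} simplifies to $0\to\ma(\J_{\subset S}\setminus\{S\})\to\ma(\J_{\subset S})\to\mi(S)\to 0$. If $\Toepr(\E)^G$ is nuclear, then $\ma(\J_{\subset S})$, being an ideal in $\Toepr(\E)^G$ by \lemref{lem:Cstar-sum-subset}, is nuclear; and $\mi(S)$, being a quotient of $\ma(\J_{\subset S})$, is then nuclear as well. This holds for every $S\in\J$, which is the desired conclusion.

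The only delicate point is verifying that the inductive-limit step is legitimate: one must check that $\Toepr(\E)^G$ really is the C*-inductive limit of the system $\{\ma(\mc)\}_{\mc}$ indexed by finite $\cap$\nb-closed subcollections of $\J$, which is exactly the content of part (3) of \lemref{lem:direct-limit} together with the observation in \lemref{lem:Cstar-sum-subset} that these algebras are nested with dense union. Given that this is already recorded in the excerpt, the proof is essentially a bookkeeping exercise with the permanence properties of nuclearity; no genuine obstacle remains.
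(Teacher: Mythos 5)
Your proposal is correct and follows essentially the same route as the paper: the forward direction is the same induction on $|\mc|$ via the extension from \lemref{lem:direct-limit}(4) followed by the inductive-limit step, and the converse is the identical argument using \eqref{eqn:exactCS} under \eqref{eqn:moreindependent}. You merely spell out the details that the paper's proof leaves implicit.
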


Since exactness passes to subalgebras, it is clear that $\mi(S)$ is exact for every $S\in \J$ provided that $\Toepu(E)^G$ is exact. To prove the converse we need \eqref{eqn:independenceps1}.

\begin{thm}\label{thm:building-subalgebras-exactness} Let $P$ be a submonoid of a group $G$ and let $\E$ be a product system over~$P$. Suppose that $\E$ satisfies \eqref{eqn:independenceps1} and that the $\Cst$\nb-algebra $\mi(S)$ is exact for every constructible right ideal $S\in \J$. Then $\Toepr(\E)^G$ is exact.
 
 \begin{proof} Suppose that $\mi(S)$ is exact for every $S\in \J$ and let $\mc\subset \J$ be a finite $\cap$\nb-closed collection of constructible right ideals. By \eqref{eqn:independenceps1} we have $\mi(\bar{S})\cap \ma(\mc\setminus\{\bar{S}\})=(0)$ when $\bar{S}$ is a maximal element in $\mc$. Hence the analogue of the exact sequence of \lemref{pro:Cstar-sum}(4) at the level of $\Toepr(\E)^G$ splits. Since exactness is preserved by  taking split extensions, an induction argument on $|\mc|$ shows that $\ma(\mc)$ is exact for every  finite $\cap$\nb-closed collection $\mc$ of constructible right ideals. Since exactness is preserved by taking direct limits, it follows that $\Toepr(\E)^G=\lim_{\mc}\ma(\mc)$ is exact.
    \end{proof}
    
\end{thm}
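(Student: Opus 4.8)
The plan is to mimic the structure of the proof of \thmref{thm:building-subalgebras}, replacing ``nuclear'' by ``exact'' throughout, but with one crucial difference: exactness, unlike nuclearity, is \emph{not} automatically inherited from an ideal and a quotient in a short exact sequence $0\to I\to A\to A/I\to 0$ without further hypotheses; what one needs is either that $A$ is a subalgebra of an exact algebra, or that the extension splits, or that one works with the ``one-sided'' permanence properties that exactness does enjoy (passage to subalgebras, to quotients, and to inductive limits). Here the hypothesis \eqref{eqn:independenceps1} is precisely what converts the extension in \lemref{lem:direct-limit}(4) into a split one, by forcing $\mi(\bar S)\cap\tma(\mc\setminus\{\bar S\}) = (0)$ for a maximal element $\bar S$ of $\mc$, so that the quotient map $\tma(\mc)\to\mi(\bar S)/(\mi(\bar S)\cap\tma(\mc\setminus\{\bar S\})) \cong \mi(\bar S)$ has a section, namely the inclusion $\mi(\bar S)\hookrightarrow\tma(\mc)$.

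First I would record the key algebraic observation: when $\E$ satisfies \eqref{eqn:independenceps1}, the linear span $\ma(\mc) = \bigoplus_{S\in\mc}\mi(S)$ is an internal direct sum of the subspaces $\mi(S)$, and hence for any maximal $\bar S\in\mc$ the subalgebra $\tma(\mc\setminus\{\bar S\}) = \bigoplus_{\bar S\neq S\in\mc}\mi(S)$ meets $\mi(\bar S)$ only in $(0)$. Combined with the exact sequence from \lemref{lem:direct-limit}(4) (at the level of $\Toepr(\E)^G$), this shows that $0\to\tma(\mc\setminus\{\bar S\})\to\tma(\mc)\to\mi(\bar S)\to 0$ is a \emph{split} exact sequence of C*-algebras, the splitting being implemented by the inclusion $\mi(\bar S)\hookrightarrow\ma(\mc)$.

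Next I would run the induction on $|\mc|$. The base case $|\mc| = 1$ is the hypothesis that $\mi(S)$ is exact for every $S\in\J$. For the inductive step, assuming $\ma(\mc')$ is exact for every $\cap$-closed $\mc'$ with $|\mc'| = n$, I pick $\mc$ with $|\mc| = n+1$ and a maximal element $\bar S$; then $\mc' := \mc\setminus\{\bar S\}$ is still $\cap$-closed (removing a maximal element destroys no intersections, exactly as argued in \lemref{lem:direct-limit}), so $\ma(\mc')$ is exact by the inductive hypothesis, $\mi(\bar S)$ is exact by hypothesis, and the split exact sequence above together with the fact that exactness is preserved under split extensions gives that $\ma(\mc)$ is exact. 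Finally, since $\Toepr(\E)^G = \lim_{\mc}\ma(\mc)$ by \lemref{lem:direct-limit}(3) (its analogue at the reduced level) and exactness is preserved under inductive limits, $\Toepr(\E)^G$ is exact.

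The main obstacle, and the reason \eqref{eqn:independenceps1} is assumed here but not in \thmref{thm:building-subalgebras}, is precisely the failure of two-out-of-three for exactness in general short exact sequences: without the splitting one would need to know that $\ma(\mc)$ embeds in some exact C*-algebra, which is not available at this level of generality. So the real content is to verify carefully that \eqref{eqn:independenceps1} yields a genuine \emph{internal direct sum} decomposition that persists after removing the maximal element, giving the complemented inclusion that makes the extension split; everything else is an invocation of the standard permanence properties of exactness (subalgebras, quotients, split extensions, inductive limits).
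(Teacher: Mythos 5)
Your proposal is correct and follows essentially the same route as the paper's proof: use \eqref{eqn:independenceps1} to get $\mi(\bar S)\cap\ma(\mc\setminus\{\bar S\})=(0)$ for a maximal $\bar S$, conclude that the extension from \lemref{lem:direct-limit}(4) splits at the level of $\Toepr(\E)^G$, induct on $|\mc|$ using preservation of exactness under split extensions, and finish with the inductive limit. Your added commentary on why the splitting (rather than two-out-of-three) is the essential point is accurate, and the only quibble is a cosmetic one: you occasionally write $\tma$ where the reduced-level algebras $\ma$, $\mi$ are meant.
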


    Condition \eqref{eqn:moreindependent} implies condition \eqref{eqn:independenceps1}, and both are satisfied by right LCM monoids. If we apply \thmref{thm:building-subalgebras} to a compactly aligned product system over a right LCM monoid we recover a key result that is embedded in the proof of \cite[Theorem~6.11]{KKLL2023}.
\begin{cor}[cf. \cite{KKLL2023}*{Theorem~6.11}]\label{cor:compactly-align} Let $P$ be a right LCM submonoid of $G$ and let $\E$ be a compactly aligned product system over $P$ with coefficient algebra~$A$. Then the fixed point algebra $\Toepr(\E)^G$ of the gauge coaction is nuclear (resp. exact) if and only if $A$ is nuclear (resp. exact).
\begin{proof} If $\E$ is compactly aligned, for every $p\in P$ the C*-subalgebra $\mi(pP)$ corresponding to~$S=pP$ is the algebra of compact operators~$\Comp(\E_p)$. This is nuclear (resp. exact) if and only if $A$ is,  because $\Comp(\E_p)$ is Morita equivalent to an ideal in~$A$. The conclusion now follows from \thmref{thm:building-subalgebras} and \thmref{thm:building-subalgebras-exactness}.
    
\end{proof}

\end{cor}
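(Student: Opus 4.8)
The plan is to deduce \corref{cor:compactly-align} directly from \thmref{thm:building-subalgebras} and \thmref{thm:building-subalgebras-exactness} by identifying, for a compactly aligned product system over a right LCM monoid, exactly what the building blocks $\mi(S)$ look like. Since $P$ is right LCM, every constructible right ideal is principal, so it suffices to understand $\mi(pP)$ for $p\in P$. First I would recall from the structure of compactly aligned product systems over right LCM monoids (e.g. as used in \cite{KKLL2023}) that $\mi(pP) = \clsp\{\fock(\xi)^*\fock(\eta) : \xi,\eta\in\E_p\}$, and that for the Fock representation this closed span is precisely the algebra $\Comp(\E_p)$ of compact operators on $\E_p$, via the identification $\fock(\xi)^*\fock(\eta)\mapsto \theta_{\xi,\eta}$. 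The compact alignment hypothesis is what guarantees this span closes up correctly (products of such elements land back in $\Comp(\E_p)$ by the Nica-type compatibility), and that spanning elements for a neutral word $\alpha$ with $K(\alpha)=pP$ reduce to this form.

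Next I would invoke the standard fact that $\Comp(\E_p)$ is Morita equivalent to the ideal $\braket{\E_p}{\E_p}\idealin A$: indeed $\E_p$ is an imprimitivity bimodule between $\Comp(\E_p)$ and $\overline{\braket{\E_p}{\E_p}}$. Since nuclearity and exactness are both Morita-invariant, and both pass to ideals (nuclearity passes to ideals and quotients; exactness passes to subalgebras), it follows that $\Comp(\E_p)$ is nuclear (resp.\ exact) if and only if $A$ is nuclear (resp.\ exact). For the ``only if'' direction one also uses that $A = \mi(P)$ is itself one of the building blocks (the case $S=P$), so nuclearity/exactness of $\Toepr(\E)^G$ forces nuclearity/exactness of $A$ by the same hereditary arguments, or directly from the converse parts of the two theorems once one checks that right LCM monoids satisfy \eqref{eqn:moreindependent} and \eqref{eqn:independenceps1} — which is exactly what was noted in the sentence preceding the corollary, and follows from \proref{pro:suff-conditions} together with the fact that all constructible right ideals are principal.

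Putting it together: if $A$ is nuclear, then each $\mi(pP)=\Comp(\E_p)$ is nuclear, so \thmref{thm:building-subalgebras} gives that $\Toepr(\E)^G$ is nuclear; conversely, since right LCM monoids satisfy \eqref{eqn:moreindependent}, the converse half of \thmref{thm:building-subalgebras} applies and each $\mi(S)$, in particular $\mi(P)=A$, is nuclear. The exactness statement is identical, using \thmref{thm:building-subalgebras-exactness} in place of the forward direction and the fact that \eqref{eqn:independenceps1} holds for right LCM monoids; for the converse one observes exactness of $\Toepr(\E)^G$ forces exactness of the subalgebra $\mi(P)=A$.

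The main obstacle I anticipate is not logical depth but bookkeeping: verifying carefully that for a compactly aligned product system over a right LCM monoid the subalgebra $\mi(S)$ attached to $S=pP$ genuinely coincides with $\Comp(\E_p)$ rather than something larger. This requires knowing that every neutral word $\alpha$ with $K(\alpha)=pP$ contributes spanning elements that, after using (R1), (R2) and the compact-alignment identity $\fock(\xi)^*\fock(\eta)\in\Comp(\E_{p^{-1}q})\cdot(\text{creation/annihilation terms})$, collapse into $\clsp\{\theta_{\xi,\eta}:\xi,\eta\in\E_p\}$. This is precisely the computation that is ``embedded in the proof of \cite[Theorem~6.11]{KKLL2023}'', so I would cite that rather than reproduce it, and the remaining steps (Morita equivalence, heredity of nuclearity/exactness) are standard.
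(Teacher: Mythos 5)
Your proposal is correct and follows essentially the same route as the paper's proof: identify $\mi(pP)$ with $\Comp(\E_p)$, transfer nuclearity/exactness through the Morita equivalence of $\Comp(\E_p)$ with the ideal $\overline{\braket{\E_p}{\E_p}}\idealin A$, and invoke \thmref{thm:building-subalgebras} and \thmref{thm:building-subalgebras-exactness}, with the converse directions handled via $\mi(P)=A$ and the observation (made just before the corollary in the paper) that right LCM monoids satisfy \eqref{eqn:moreindependent} and \eqref{eqn:independenceps1}. One cosmetic slip: the spanning elements of $\mi(pP)$ realizing $\theta_{\xi,\eta}$ are of the form $\fock(\xi)\fock(\eta)^*$ rather than $\fock(\xi)^*\fock(\eta)$ --- the latter already lies in $A$ by (R2) --- but your stated identification with $\Comp(\E_p)$ makes clear this is only a typo.
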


\section{Main results}\label{sec:main-results}

We wish to apply the  results in the previous section to a class of product systems for which the left action of the coefficient algebra contains the compact operators. Central to our approach is the set $\W_{\rm{sym}}$ of 
symmetric words. Recall that a word $\beta \in \W$ is symmetric if it is the form $\beta =\tilde\alpha \alpha$ for $\alpha \in \W$, where $\tilde{\alpha}$ is the reverse of $\alpha$.

\bigskip

\begin{note} We will denote by $\langle\W_{\rm{sym}}\rangle$ the smallest subcollection of words in $\W$ containing all symmetric words that is closed under concatenation and under conjugation by words in $\W$, in the sense that if $\alpha\in \W$ and $\beta\in \langle\W_{\rm{sym}}\rangle $, then $\tilde{\alpha}\beta\alpha\in \langle\W_{\rm{sym}}\rangle.$ Given a constructible right ideal $S\in \J$, we denote by $\langle\W_{\rm{sym}}\rangle (S)$ the collection of words $\beta\in \langle\W_{\rm{sym}}\rangle$ such that $K(\beta)=S.$ 
Notice that every word in $\langle\W_{\rm{sym}}\rangle$ is neutral.
Further, given a word $\beta=(p_1,p_2,\ldots, p_{2l-1},p_{2l})\in\W$, we let $$\mi(\beta)\coloneqq\clsp\{\fock(\xi_{p_1})^*\fock(\xi_{p_2})\ldots \fock(\xi_{p_{2l-1}})^* \fock(\xi_{p_{2l}})\mid \xi_{p_i}\in \E_{p_1}\text{ for }i=1,\ldots,2l\}.$$ 
\end{note}

\begin{lem}\label{lem:end-symmetric-words1} Let $P$ be a submonoid of a group and let $\E=(\E_p)_{p\in P}$ be a product system over~$P$ with coefficients in  a $\Cst$\nb-algebra $A$. Suppose  that $\varphi_p(A)\supset \Comp(\E_p)$ for all $p\in P$ and let $\beta\in \langle\W_{\rm{sym}}\rangle$. Then for each $b\in \mi(\beta)$ there exists $a\in A$ such that $b = \varphi_s(a)$ as operators on $\E_s$ for every $s\in K(\beta)$. 
\begin{proof} 
The result is true if $\beta=(p,p)$ for $p\in P$ because then $\fock(\xi_p)^*\fock(\eta_p)=\fock(\braket{\xi_p}{\eta_p})$ for every $\xi_p, \eta_p\in \E_p$. Suppose $\beta\in \W$ has the form $\beta=(p,q,q,p).$ Let $\xi_1,\xi_2\in \E_p$ and $\eta_1,\eta_2\in \E_q.$ We will show that there exists $a \in A$ such that 
\[
\fock(\xi_1)^* \fock(\eta_1) \fock(\eta_2)^*\fock(\xi_2)\zeta_s=\varphi_s(a)\zeta_s
\]
for all $\zeta_s\in \E_s$ and $s\in K(\beta)$. 

Let $c\in A$ be such that $\varphi_s(c)=\fock(\eta_1) \fock(\eta_2)^*$ on $\E_s$ for all $s\in qP$. If $s\in K(\beta)$, then $ps\in qP$ and we have 
\[
\fock(\xi_1)^* \fock(\eta_1) \fock(\eta_2)^*\fock(\xi_2)\zeta_s=\fock(\xi_1)^* \varphi_{ps}(c)\fock(\xi_2)\zeta_s=\varphi_s(\braket{\xi_1}{\varphi_p(c)\xi_2})\zeta_s.
\]
Setting $a\coloneqq \braket{\xi_1}{\varphi_p(c)\xi_2}$ we see that $\fock(\xi_1)^* \fock(\eta_1) \fock(\eta_2)^*\fock(\eta_2)\zeta_s=\varphi_s(a)\zeta_s$ for all $\zeta_s\in \E_s$ and all $s\in K(\beta)$, as needed.

In order to establish the statement, consider the collection of neutral words $\beta$ such that for all $b\in \mi(\beta)$ there exists $a\in A$ with $\varphi_s(a)=b$ on $\E_s$ for all $s\in K(\beta)$. To show that this collection contains $\langle\W_{\rm{sym}}\rangle$, it suffices to show that it is closed under concatenation and conjugation  by words of the form $(p,q)$ with $p,q\in P$. Indeed,  that it is closed under concatenation follows because $K(\beta_1\beta_2)=K(\beta_1)\cap K(\beta_2)$ whenever $\beta_1,\beta_2\in \W$ are neutral words and because the diagonal action of $A$ on the Fock space is multiplicative. That the collection is closed under concatenation by words of the form $(p,q)$ with $p,q\in P$ follows as in the first part of the proof.
\end{proof}

\end{lem}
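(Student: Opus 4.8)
The plan is to prove the statement by a two-stage induction, exactly mirroring the structure already used in Lemma~\ref{lem:single-ideals-sub}(3) and prefigured in the special cases $\beta = (p,p)$ and $\beta = (p,q,q,p)$ that the statement itself has started to work out. The target property — call it $(\star)$ — is: \emph{a neutral word $\beta$ satisfies $(\star)$ if for every $b \in \mi(\beta)$ there is $a \in A$ with $\varphi_s(a) = b$ on $\E_s$ for all $s \in K(\beta)$.} By \defref{}... rather, by the definition of $\langle\W_{\rm{sym}}\rangle$ as the smallest collection of words containing all symmetric words and closed under concatenation and under conjugation $\beta \mapsto \tilde\alpha\beta\alpha$ by arbitrary $\alpha \in \W$, it suffices to check three things: that every symmetric word satisfies $(\star)$, that $(\star)$ is closed under concatenation, and that $(\star)$ is closed under conjugation. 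Moreover conjugation by a general $\alpha = (r_1,\ldots,r_{2m})$ is an iterate of conjugation by length-two words $(p,q)$, so it is enough to handle conjugation by a single $(p,q)$.

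First I would dispose of concatenation. If $\beta_1,\beta_2$ are neutral and satisfy $(\star)$, and $b \in \mi(\beta_1\beta_2)$, then $b$ is a limit of sums of products $b_1 b_2$ with $b_i \in \mi(\beta_i)$; each $b_i = \varphi_s(a_i)$ on $\E_s$ for all $s \in K(\beta_i)$, and since $K(\beta_1\beta_2) = K(\beta_1)\cap K(\beta_2)$ (by \cite[Lemma~2.5(4)]{laca-sehnem}) and the diagonal action $s \mapsto \varphi_s$ is a homomorphism, $b_1b_2 = \varphi_s(a_1a_2)$ on $\E_s$ for all $s \in K(\beta_1\beta_2)$; passing to sums and limits keeps us inside $\varphi_s(A)$, which is closed. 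Next, conjugation by $(p,q)$: given $b \in \mi((p,q)\beta(q,p))$, a spanning element has the form $\fock(\xi_p)^*\fock(\eta_q)\, b_0\, \fock(\eta_q')^*\fock(\xi_p')$ with $b_0 \in \mi(\beta)$; applying $(\star)$ to $\beta$, $b_0 = \varphi_t(c)$ on $\E_t$ for $t \in K(\beta)$, and one computes on $\E_s$ with $s \in K((p,q)\beta(q,p))$ — noting $ps$ lies in the relevant shifted ideal so that $\fock(\eta_q)\varphi_{?}(c)\fock(\eta_q')^*$ acts as $\varphi_{ps}$ of something, then sandwiching by $\fock(\xi_p)^*(\cdot)\fock(\xi_p')$ collapses to $\varphi_s\big(\braket{\xi_p}{\varphi_p(\text{that})\xi_p'}\big)$ — which is the computation already carried out for $\beta = (p,q,q,p)$ verbatim. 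So the $(p,q,q,p)$ case in the text is really the proof of the conjugation step applied to the base word $(q,q)$.

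The remaining point is that every symmetric word $\beta = \tilde\alpha\alpha$ satisfies $(\star)$. Here I would argue by induction on the length of $\alpha$: if $\alpha = (r_1,\ldots,r_{2m})$, write $\alpha = \alpha'(r_{2m-1},r_{2m})$; then $\tilde\alpha\alpha = (r_{2m},r_{2m-1})\,\tilde{\alpha'}\alpha'\,(r_{2m-1},r_{2m})$, which is a conjugate of the shorter symmetric word $\tilde{\alpha'}\alpha'$ by the length-two word $(r_{2m-1},r_{2m})$. By the already-established conjugation step and the inductive hypothesis, this satisfies $(\star)$; the base case is $\beta = (r,r)$, handled at the very start via $\fock(\xi_r)^*\fock(\eta_r) = \fock(\braket{\xi_r}{\eta_r}) = \varphi_r$ applied to... rather $= \varphi_e(\braket{\xi_r}{\eta_r})$ acting on the coefficient side, which is in $A$. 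This closes the induction and hence the proof.

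The main obstacle I anticipate is purely bookkeeping: verifying carefully that when $s \in K((p,q)\beta(q,p))$ the group element $p s$ (and its iterates) land in precisely the ideals $qP$, $K(\beta)$ shifted appropriately, so that each intermediate operator is genuinely the diagonal action of some element of $A$ rather than merely a compact operator — this is where the hypothesis $\varphi_p(A) \supseteq \Comp(\E_p)$ is used, to promote the compact operator $\fock(\eta_q)\fock(\eta_q')^*$ on $\E_{ps}$ to $\varphi_{ps}(c)$ for an honest $c \in A$, and one must check $ps \in qP$ exactly as in the displayed $(p,q,q,p)$ computation. Everything else is formal once the definition of $\langle\W_{\rm{sym}}\rangle$ is unwound into the three closure conditions.
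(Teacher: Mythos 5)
Your proposal is correct and follows essentially the same route as the paper's proof: the same base computations for $(p,p)$ and $(p,q,q,p)$ using $\varphi_p(A)\supset\Comp(\E_p)$, followed by the same closure argument for the collection of good neutral words under concatenation and under conjugation by length-two words. The only difference is that you spell out explicitly the induction showing every symmetric word $\tilde\alpha\alpha$ is an iterated length-two conjugate of the base case, a step the paper leaves implicit.
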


\begin{prop}\label{pro:nuclearity-exactness-sym1} Let $P$ be a submonoid of a group and let $\E=(\E_p)_{p\in P}$ be a product system over~$P$ with coefficients in  a $\Cst$\nb-algebra $A$. Suppose that $\varphi_p(A)\supset \Comp(\E_p)$ for all $p\in P$. Let $\emptyset\neq S\in \J$ and suppose that 
\begin{equation}\label{eqn:I(S)includedinspan}
    \mi(S)\subset\clsp\{bQ_S\mid b\in \mi(\beta),\  \beta\in \langle\W_{\rm{sym}}\rangle, \ K(\beta)\supset S\},
    \end{equation}
where $Q_S$ denotes the orthogonal projection of the Fock space~$\E^+$ onto $\bigoplus_{p\in S}\E_p$. Then there exists an ideal $I_S\idealin A$ such that the \Star homomorphism $\fock_S$ from $A$ into the algebra of adjointable operators on $\bigoplus_{s\in S}\E_s$ obtained by the restriction of the Fock representation induces a surjective \Star homomorphism  $\fock_S\restriction_{I_S}\colon I_S\to \mi(S)$. In particular, if $A$ is nuclear (resp. exact) then so is $\mi(S)$. 
\begin{proof}
     Let $\fock_S$ be the \Star homomorphism from $A$ into the algebra of adjointable operators on $\bigoplus_{s\in S}\E_s$ obtained by compressing the Fock representation with the projection onto $\bigoplus_{s\in S}\E_s$. By \lemref{lem:end-symmetric-words1} for every $\beta\in\langle\W_{\rm{sym}}\rangle$ and $b\in \mi(\beta)$ there exists $a\in A$ such that $\fock_{K(\beta)}(a)=b$. Then the assumption \eqref{eqn:I(S)includedinspan} implies that $\fock_S(A)\supset \mi(S)$.
     
     Define $I_S\coloneqq \{a\in A\mid \fock_S(a)\in \mi(S)\}.$ Then $I_S$ is an ideal of~$A$ because $\fock^+$ is a representation of $\E$. We conclude that $\fock_S\restriction_{I_S}\colon I_S\to \mi(S)$  is a surjective \Star homomorphism. Since ideals of nuclear $\Cst$\nb-algebras are also nuclear and nuclearity passes to quotients, we obtain that nuclearity of $A$ implies that of $\mi(S).$ 
\end{proof}
\end{prop}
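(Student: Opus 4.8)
The plan is to reduce the statement to a clean surjection $I_S \to \mi(S)$ and then invoke standard permanence properties of nuclearity and exactness. First I would introduce the compression $\fock_S \colon A \to \Bound\big(\bigoplus_{s\in S}\E_s\big)$ of the Fock representation and observe that it is a $\Star$homomorphism because the diagonal action of $A$ on the Fock module preserves each summand; the key point is that the individual left actions $\varphi_s$ for $s\in S$ are compatible under the multiplication maps, so $\fock_S$ is genuinely multiplicative. The essential input is \lemref{lem:end-symmetric-words1}: for every $\beta \in \langle\W_{\rm{sym}}\rangle$ and every $b \in \mi(\beta)$ there is $a \in A$ with $b = \varphi_s(a)$ on $\E_s$ for all $s \in K(\beta)$; in particular, if $K(\beta)\supset S$ then $b Q_S = \fock_S(a)$.

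Next I would use the hypothesis \eqref{eqn:I(S)includedinspan} to get $\mi(S) \subset \clsp\{ \fock_S(a) \mid a \in A \} = \overline{\fock_S(A)}$. Since $\fock_S$ is a $\Star$homomorphism of C*-algebras, its image $\fock_S(A)$ is already closed, so in fact $\mi(S) \subset \fock_S(A)$. I then set $I_S \coloneqq \fock_S^{-1}(\mi(S)) = \{a \in A \mid \fock_S(a) \in \mi(S)\}$. This is a C*-subalgebra of $A$, and it is an \emph{ideal}: given $a \in I_S$ and $c \in A$, I need $\fock_S(ca), \fock_S(ac) \in \mi(S)$. Because $\mi(S)$ sits inside $\Toepr(\E)^G$ and is closed under left and right multiplication by $\fock_e(A) = A$ (this follows from the representation relations (R1), (R2): prepending or appending a factor $\fock(a)$, $a\in A$, to a spanning product $\fock(\xi_{p_1})^*\cdots\fock(\xi_{p_{2k}})$ for a neutral word with $K(\alpha)=S$ produces another such product with the same $K(\alpha)$), and since $\fock_S(ca) = \fock_e(c)\fock_S(a)$ and $\fock_S(ac) = \fock_S(a)\fock_e(c)$ hold as operators on $\bigoplus_{s\in S}\E_s$, ideal-ness of $I_S$ follows. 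Hence $\fock_S\restriction_{I_S}\colon I_S \to \mi(S)$ is a well-defined $\Star$homomorphism, and it is surjective precisely because $\mi(S) \subset \fock_S(A)$ forces every element of $\mi(S)$ to have a preimage, which by definition lies in $I_S$.

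Finally, for the permanence conclusions: $\mi(S)$ is a quotient of $I_S$, and $I_S$ is an ideal in $A$. If $A$ is nuclear, then every ideal of $A$ is nuclear and every quotient of a nuclear C*-algebra is nuclear, so $\mi(S)$ is nuclear. If $A$ is exact, then every subalgebra of $A$ is exact, so $I_S$ is exact, and exactness passes to quotients, so $\mi(S)$ is exact.

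The main obstacle I anticipate is the verification that $I_S$ is an ideal, or more precisely keeping track of on which module each identity is asserted: the equalities $\fock_S(a) = b$ only hold as operators restricted to $\bigoplus_{s\in S}\E_s$, not on the whole Fock module, so one must be careful that $\mi(S)$ is genuinely realized inside $\Bound\big(\bigoplus_{s\in S}\E_s\big)$ via this compression and that the multiplicative structure is not disturbed by passing to the compression. Once that bookkeeping is settled, the rest is a routine application of the permanence properties of nuclearity and exactness.
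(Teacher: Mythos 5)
Your proposal is correct and follows essentially the same route as the paper's proof: use \lemref{lem:end-symmetric-words1} together with \eqref{eqn:I(S)includedinspan} to get $\mi(S)\subset\fock_S(A)$, set $I_S=\fock_S^{-1}(\mi(S))$, and apply the standard permanence properties. Your explicit verification that $I_S$ is an ideal (via closure of $\mi(S)$ under multiplication by $\fock_e(A)$) and your remark that $\mi(S)$ is faithfully realized on $\bigoplus_{s\in S}\E_s$ simply fill in details the paper compresses into the phrase ``because $\fock^+$ is a representation of $\E$.''
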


    \begin{thm}\label{thm:Toep-vs-coefficient1}  Let $P$ be a submonoid of a group $G$ and let $\E=(\E_p)_{p\in P}$ be a product system over~$P$ with coefficients in  a $\Cst$\nb-algebra $A$. Suppose that $\varphi_p(A)\supset \Comp(\E_p)$ for all $p\in P$ and that  every nontrivial constructible right ideal $ S$ of $P$ satisfies
    \eqref{eqn:I(S)includedinspan}. 
    Then the following are equivalent:
    \begin{enumerate}
        \item the coefficient algebra $A$ is nuclear;
        \item the fixed point algebra $\Toepr(\E)^G$ is nuclear.
    \end{enumerate} 
    If, in addition, $G$ is amenable, $\Toepu(\E) \cong \Toepr(\E)$ and these conditions are also equivalent to
    \begin{enumerate}
        \item[\textup{(3)}] the Toeplitz algebra $\Toepr(\E)$ is nuclear.
    \end{enumerate} 
\begin{proof} 
Suppose first $A$ is nuclear. By \proref{pro:nuclearity-exactness-sym1},  $\mi(S)$ is nuclear for every $S\in \J$. By \thmref{thm:building-subalgebras} $\Toepr(\E)^G$ is nuclear. 

Suppose now that $\Toepr(\E)^G$ is nuclear. The compression of the Fock representation with the projection onto the direct summand $\E_e=A$ yields a \Star homomorphism from  $\Toepr(\E)^G$ onto $A$. Hence $A$ is nuclear. 

If $G$ is any group containing~$P$, the topological $G$\nb-grading of $\Toepr(\E)$ induced by the canonical normal gauge coaction of $G$ gives $\Toepr(\E)$ the structure of  reduced $\Cst$\nb-algebra of a Fell bundle over $G$. Since the fixed point algebra $\Toepr(\E)^G\cong \Toepu(\E)^G$ is the range of the conditional expectation of the gauge coaction,  (3) implies (2).
    Suppose now $G$ is amenable, in which case $\Toepu(\E) \cong \Toepr(\E)$ because their Fell bundle is amenable. If $\Toepr(\E)^G$ is nuclear, then $\Toepr(\E)$ is nuclear by \cite[Proposition 25.10]{Exel:Partial_dynamical}.  This shows that in this case \textup{(3)} is equivalent to \textup{(1)} and \textup{(2)}, completing the proof.
    \end{proof}
\end{thm}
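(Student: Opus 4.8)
The plan is to verify the three asserted equivalences by assembling results already proved in the paper: \proref{pro:nuclearity-exactness-sym1} to descend nuclearity from $A$ to each building block $\mi(S)$, \thmref{thm:building-subalgebras} to assemble nuclearity of the building blocks into nuclearity of $\Toepr(\E)^G$, and the Fell bundle machinery to lift from the fixed point algebra to $\Toepr(\E)$ when $G$ is amenable. Concretely, for $(1)\Rightarrow(2)$ I would first note that the standing hypothesis $\varphi_p(A)\supset\Comp(\E_p)$ together with condition \eqref{eqn:I(S)includedinspan} for every nontrivial $S\in\J$ puts us exactly in the situation of \proref{pro:nuclearity-exactness-sym1}, whence each $\mi(S)$ is a quotient of an ideal $I_S\idealin A$ via $\fock_S\restriction_{I_S}$; since ideals and quotients of nuclear C*-algebras are nuclear, nuclearity of $A$ forces $\mi(S)$ nuclear for every $S\in\J$ (the case $S=\emptyset$ being trivial since $\mi(\emptyset)=(0)$ by \lemref{lem:single-ideals-sub}(1), and the case $S=P$ giving $\mi(P)=A$). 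Then \thmref{thm:building-subalgebras} immediately yields that $\Toepr(\E)^G$ is nuclear.

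For $(2)\Rightarrow(1)$ the argument is short: compressing the Fock representation onto the summand $\E_e=A$ of the Fock module gives a surjective $*$\nobreakdash-homomorphism $E_{A,r}\restriction_{\Toepr(\E)^G}\colon\Toepr(\E)^G\twoheadrightarrow A$ (this is the conditional expectation from the corollary following \lemref{lem:single-ideals-sub}, restricted to the fixed point algebra, where it is genuinely multiplicative because $\tmi(P)=A$ sits diagonally and all other $\tmi(S)$ vanish under the expectation), so nuclearity of $\Toepr(\E)^G$ passes to its quotient $A$. This establishes the equivalence of $(1)$ and $(2)$ without any amenability hypothesis.

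For the last part, assume $G$ amenable. First, $\Toepr(\E)$ carries a topological $G$\nobreakdash-grading coming from the normal gauge coaction recorded before \defref{def:CMcovariance}, so it is the reduced C*-algebra of a Fell bundle over $G$; amenability of $G$ makes this Fell bundle amenable, hence $\Cst_r\cong\Cst$ for the bundle, which one identifies with the canonical map $\Toepu(\E)\to\Toepr(\E)$ being an isomorphism (using that the fixed point algebras already agree, \defref{def:CMcovariance}). The implication $(3)\Rightarrow(2)$ holds in complete generality since $\Toepr(\E)^G$ is the range of the conditional expectation and nuclearity passes to ranges of conditional expectations (equivalently, to C*-subalgebras that are images of conditional expectations). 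For $(2)\Rightarrow(3)$, invoke \cite[Proposition~25.10]{Exel:Partial_dynamical}: for an amenable Fell bundle, nuclearity of the unit fibre algebra implies nuclearity of the cross-sectional C*-algebra. Here the unit fibre of the bundle is precisely $\Toepr(\E)^G$, so its nuclearity gives nuclearity of $\Toepr(\E)$.

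I do not anticipate a serious obstacle here, since every nontrivial input has been isolated in earlier statements; the one point deserving care is the precise identification, when $G$ is amenable, of the amenable-Fell-bundle isomorphism $\Cst\cong\Cst_r$ with the statement $\Toepu(\E)\cong\Toepr(\E)$ — one must check that the Fell bundle whose cross-sectional algebra is $\Toepr(\E)$ has the same fibres as the one for $\Toepu(\E)$ and that the quotient map $\Toepu(\E)\to\Toepr(\E)$ is the bundle map, which follows from \remref{rem:sameFPAforallG} together with the coaction on $\Toepu(\E)$ constructed in the two lemmas following \proref{pro:equiv-subalgebras}. Everything else is a direct citation.
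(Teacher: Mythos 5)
Your proposal is correct and follows essentially the same route as the paper's own proof: \proref{pro:nuclearity-exactness-sym1} plus \thmref{thm:building-subalgebras} for $(1)\Rightarrow(2)$, the compression onto the summand $\E_e=A$ realizing $A$ as a quotient of $\Toepr(\E)^G$ for $(2)\Rightarrow(1)$, and the amenable Fell bundle argument with \cite{Exel:Partial_dynamical}*{Proposition 25.10} for the equivalence with $(3)$. The extra care you take with the trivial ideals $S=\emptyset$, $S=P$ and with identifying $\Toepu(\E)\cong\Toepr(\E)$ as the Fell bundle isomorphism is sound but not a departure from the paper's argument.
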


\begin{rem}
\thmref{thm:Toep-vs-coefficient1} gives conditions under which nuclearity passes from the coefficient algebra $A$ to the Toeplitz algebra $\Toepr(\E)$. What makes this possible is that the fixed point algebra $\Toepr(\E)^G$ that appears in the intermediate step does not depend on the group in which $P$ is embedded, see \remref{rem:sameFPAforallG}. So  all it takes is to find an embedding of $P$ in an  amenable group. Examples show that such embeddings may exist even in cases when the obvious embeddings of $P$ are into nonamenable groups. 
\end{rem}

When the group~$G$ is exact,   $\Toepr(\E)$ is exact if and only if the fixed point algebra $\Toepr(\E)^G$ is exact by \cite[Proposition~25.12]{Exel:Partial_dynamical}. This  focuses our attention on  exactness of $\Toepr(\E)^G$, for which we have the following result, which is independent from exactness of $G$.

\begin{thm}\label{thm:Toep-vs-coefficient-exactness}  Resume the same assumptions of Theorem \textup{\ref{thm:Toep-vs-coefficient1}}. If $\Toepr(\E)^G$ is exact, then $A$ is exact.  The converse is true if $\E$ satisfies \eqref{eqn:independenceps1}.
\begin{proof} 
The first assertion is immediate because exactness is inherited by subalgebras. Suppose now $A$ is exact and $\E$ satisfies \eqref{eqn:independenceps1}. By \proref{pro:nuclearity-exactness-sym1},  $\mi(S)$ is exact for every $S\in \J$. The result now follows from \thmref{thm:building-subalgebras-exactness}.
  \end{proof}
\end{thm}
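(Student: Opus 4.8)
The plan is to deduce \thmref{thm:Toep-vs-coefficient-exactness} by running the exactness analogue of the argument used for \thmref{thm:Toep-vs-coefficient1}. The scaffolding is already in place: \proref{pro:nuclearity-exactness-sym1} was proved with exactness in mind (it produces a surjection $\fock_S\restriction_{I_S}\colon I_S\to \mi(S)$ from an ideal of $A$), and \thmref{thm:building-subalgebras-exactness} transfers exactness of the building blocks $\mi(S)$ to exactness of $\Toepr(\E)^G$, provided \eqref{eqn:independenceps1} holds. So the proof should be short and should mirror the nuclear case almost verbatim, merely substituting ``exact'' for ``nuclear'' and invoking the appropriate permanence properties.

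Concretely, first I would dispose of the easy direction: if $\Toepr(\E)^G$ is exact, then since $A$ embeds as a $\Cst$\nb-subalgebra of $\Toepr(\E)^G$ (via the compression onto the summand $\E_e=A$, as in the proof of \thmref{thm:Toep-vs-coefficient1}, or simply because $\tmi(P)=A$ by \lemref{lem:single-ideals-sub}(3)), exactness passes to the subalgebra $A$. This uses only that exactness is inherited by $\Cst$\nb-subalgebras, which is standard. Then for the converse, assume $A$ is exact and that $\E$ satisfies \eqref{eqn:independenceps1}. By \proref{pro:nuclearity-exactness-sym1}, for every nontrivial $S\in\J$ the algebra $\mi(S)$ is exact, being a quotient of the ideal $I_S\idealin A$ — here one uses that exactness passes to ideals and to quotients. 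The remaining case $S=\emptyset$ (when $\emptyset\in\J$) gives $\mi(\emptyset)=(0)$ by \lemref{lem:single-ideals-sub}(1), which is trivially exact. With $\mi(S)$ exact for all $S\in\J$ and \eqref{eqn:independenceps1} in force, \thmref{thm:building-subalgebras-exactness} yields that $\Toepr(\E)^G$ is exact.

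I do not anticipate a genuine obstacle here, since all the real work has been done upstream; the only point requiring a moment's care is to make sure the hypothesis \eqref{eqn:I(S)includedinspan} (inherited from Theorem \ref{thm:Toep-vs-coefficient1}) really is what \proref{pro:nuclearity-exactness-sym1} needs, and that \eqref{eqn:independenceps1} is exactly the extra input \thmref{thm:building-subalgebras-exactness} requires — both of which are immediate from the statements as written. One should also note that, unlike in the nuclear case, condition \eqref{eqn:moreindependent} is not needed and there is no Fell-bundle passage to $\Toepr(\E)$ itself in this theorem; that step is handled separately in the remark preceding the statement, via \cite[Proposition~25.12]{Exel:Partial_dynamical} under the additional assumption that $G$ is exact. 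So the proof of this theorem stops cleanly at the level of $\Toepr(\E)^G$.

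\begin{proof}
The first assertion is immediate because exactness is inherited by subalgebras. Suppose now $A$ is exact and $\E$ satisfies \eqref{eqn:independenceps1}. By \proref{pro:nuclearity-exactness-sym1}, $\mi(S)$ is exact for every $S\in \J$. The result now follows from \thmref{thm:building-subalgebras-exactness}.
\end{proof}
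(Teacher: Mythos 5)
Your proof is correct and coincides with the paper's own argument essentially word for word: the easy direction by passing exactness to the subalgebra $A$, and the converse by combining \proref{pro:nuclearity-exactness-sym1} with \thmref{thm:building-subalgebras-exactness} under hypothesis \eqref{eqn:independenceps1}. Nothing further is needed.
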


\section{Examples}\label{sec:examples}

In this section we exhibit several classes of monoids for which every product system of full correspondences such that the left action of the coefficient algebra contains the compact operators automatically satisfies the assumptions of~\thmref{thm:Toep-vs-coefficient1}. Most of the monoids we consider embed into amenable groups, so that \thmref{thm:Toep-vs-coefficient1} tells us that~$\Toepr(\E)$ is nuclear if and only if the coefficient algebra~$A$ is nuclear. In all our examples the monoids embed into exact groups, so that \thmref{thm:Toep-vs-coefficient-exactness} implies that $\Toepr(\E)$ is exact if and only if  $A$ is exact.
We emphasize that the results in this section apply to the class of product systems arising from the framework initially introduced by Exel as a generalization of the crossed product construction for single endomorphisms \cite{Exel:NewLook}, and developed by Larsen for abelian semigroups of endomorphisms, provided that the left action contains the compact operators
~\cite{LarETDS2010}. This is the case when the action  of $P$ on $A$ consists of injective endomorphisms with hereditary range (Example~\ref{exa:hereditary});  in fact these examples were our original motivation behind \thmref{thm:Toep-vs-coefficient1}.
\begin{example}\label{exa:hereditary} Let $A$ be a unital $\Cst$\nb-algebra and let $\alpha\colon P\to\mathrm{End}(A)$ be an action of $P$ on $A$ by injective endomorphisms with hereditary range. For each $p\in P$ let $A_{\alpha_p}\coloneqq A\alpha_p(a)$ with left action of $A$ given by multiplication, and right action implemented by $\alpha_p$. When equipped with the $A$-valued inner product $\braket{a\alpha_p(1)}{b\alpha_p(1)}\coloneqq\alpha_p^{-1}(\alpha_p(1)a^*b\alpha_p(1)),$ each $A_{\alpha_p}$ is a correspondence over~$A$ and $A_\alpha=(A_{\alpha_p})_{p\in P}$ is a product system over $P$, where the correspondence isomorphism $\mu_{p,q}$ is determined on an elementary tensor by $\mu_{p,q}(a\alpha_p(1)\otimes b\alpha_q(1))=a\alpha_p(b)\alpha_{pq}(1).$
\end{example}

\subsection{Abelian monoids} 

\begin{thm}\label{thm:abelian} Let $P$ be an abelian cancellative monoid and $A$ be a $\Cst$\nb-algebra. Let $\E=(\E_p)_{p\in P}$ be a product system over~$P$ with coefficient algebra~$A$ such that $\varphi_p(A)\supset \Comp(\E_p)$ for all $p\in P$. Suppose in addition that $\E_p$ is full for each $p\in P$.  Then $\E$ satisfies the assumptions of Theorem~\textup{\ref{thm:Toep-vs-coefficient1}},  hence $\Toepr(\E)$ is nuclear if and only if $A$ is nuclear.

\begin{proof} By assumption $P$ embeds in an abelian (hence amenable) group, and $\varphi(A)$ contains the compacts by assumption, so we only need to verify that  \eqref{eqn:I(S)includedinspan} holds for $\E$. 

First we observe that since $\E$ is full and $P$ is abelian, 
\begin{equation}\label{eq:fullness-mi}
    \mi((p,q))=\fock(\E_{p})^*\fock(\E_{q})=\fock(\E_{sp})^*\fock(\E_{sq})= \fock(\E_{ps})^*\fock(\E_{qs})=\mi((ps,qs))
    \end{equation} for all $p,q\in P$. 
   Given a not necessarily neutral word $\beta=(p_1,p_2,p_3,p_4)\in\W^2$, 
   we let $\beta'\coloneqq(p_1p_3,p_2p_3,p_3p_2,p_4p_2)\in \W^2$; then $\dot{\beta'}=\dot{\beta},$ $K(\beta')=K(\beta)$ and \eqref{eq:fullness-mi} implies that  $\mi(\beta)\subset\mi(\beta')$. Notice that the second letter coincides with the third one.

Assume as induction hypothesis that for some fixed $l\geq 3$ , and for every word $\sigma\in \W^s$ with $s\leq l$ there exists $\sigma'\in \W^s$ with $\dot{\sigma'}=\dot{\sigma}$, $K(\sigma')=K(\sigma)$,  and $\mathcal{I}(\sigma)\subset\mathcal{I}(\sigma')$ 
such that every \emph{even} letter $q_{2j}$ in $\sigma'$ coincides with the subsequent letter $q_{2j+1}$. Let $\beta=(p_1,p_2,\ldots,p_{2l+1}, p_{2l+2})\in \W^{l+1}$ be a word of even length $l+1$. Write $\beta=\sigma(p_{2l+1},p_{2l+2})$ with $\sigma=(p_1, \ldots, p_{2l-1},p_{2l})\in \W^l$ and let $\sigma'=(q_1,\ldots, q_{2l})\in \W^l$ be as in the induction hypothesis for~$\sigma$. Then $\dot{\beta}=\dot{\sigma'}p_{2l+1}^{-1}p_{2l},$ $K(\beta)=K(\dot{\sigma'}p_{2l+1}^{-1}p_{2l+2})$, and $\mi(\beta)\subset \mi(\sigma'(p_{2l+1},p_{2l+2})).$ 
Set 
\begin{align*}
\beta'&=(q_1p_{2l+1},q_2p_{2l+1},\ldots,q_{2l-1}p_{2l+1},q_{2l}p_{2l+1},p_{2l+1}q_{2l},p_{2l+2}q_{2l})\\
 &=\big(\sigma'\cdot p_{2l+1}\big) \big((p_{2l+1},p_{2l+2})\cdot q_{2l}\big).
\end{align*}
That is, $\beta'\in \W^{l+1}$ is the concatenation of the word obtained by multiplying  each letter of $\sigma'$ by $p_{2l+1}$ on the right, with the word $(p_{2l+1}q_{2l},p_{2l+2}q_{2l})$. Then $\beta'$ satisfies $\dot{\beta'}=\dot{\beta}$, $K(\beta')=K(\sigma'(p_{2l+1},p_{2l+2}))=K(\beta)$,  and again by \eqref{eq:fullness-mi}, 
\[
\mi(\beta)\subset \mi(\sigma'(p_{2l+1},p_{2l}))\subset\mi(\beta').
\]
Moreover, since $P$ is abelian, $q_{2l}p_{2l+1} = p_{2l+1}q_{2l}$ and thus every even letter of $\beta'$  coincides with the subsequent letter. This proves that the induction hypothesis holds for words of even length $l+1$, and hence by induction it also holds for words of arbitrary length. 

Now let $\beta$ be a neutral word. Then there exists 
a neutral word $\beta'$, with the same even length as $\beta$, such that $K(\beta')=K(\beta),$ $\mi(\beta)\subset \mi(\beta')$ and every even letter in $\beta'$ coincides with the subsequent letter. Because $\beta'$ is neutral, it follows in addition that the first and last letters of $\beta'$ are the same. Hence we may view $\beta'$ as a word in $\langle\W_{\rm{sym}}\rangle$ by adding letters that equal to the identity element~$e$, if necessary. This shows that $\E$ satisfies \eqref{eqn:I(S)includedinspan}, and thus all the  assumptions of \thmref{thm:Toep-vs-coefficient1} are satisfied. By the equivalence of (1) and (3) in \thmref{thm:Toep-vs-coefficient1}, $\Toepr(\E)$ is nuclear if and only if $A$ is nuclear, completing the proof.
    \end{proof}
    
\end{thm}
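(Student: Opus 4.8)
The plan is to verify that Theorem~\ref{thm:abelian} reduces to checking the single hypothesis \eqref{eqn:I(S)includedinspan} of \thmref{thm:Toep-vs-coefficient1}, since the remaining hypotheses are immediate: an abelian cancellative monoid embeds in its Grothendieck group, which is abelian and hence amenable, so $\Toepu(\E)\cong\Toepr(\E)$ and the equivalence of (1) and (3) in \thmref{thm:Toep-vs-coefficient1} applies once \eqref{eqn:I(S)includedinspan} is established; and $\varphi_p(A)\supset\Comp(\E_p)$ holds by assumption.

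First I would exploit fullness. If $\E_q$ is full, then $\fock(\E_q)\fock(\E_q)^*$ acts as an approximate unit for left-multiplication-type operators, and more usefully, for $p,q,s\in P$ the identity $\fock(\E_p)^*\fock(\E_q)=\fock(\E_{sp})^*\fock(\E_{sq})$ should hold: writing an element of $\E_{sp}$ as $\fock_s(\E_s)\otimes \E_p$ via the multiplication isomorphism (fullness of $\E_s$ makes $\fock_s(\E_s)\fock_s(\E_s)^*$ an approximate identity on the appropriate summand), one gets $\fock(\E_{sp})^*\fock(\E_{sq}) = \fock(\E_p)^*\fock_s(\E_s)^*\fock_s(\E_s)\fock(\E_q) = \fock(\E_p)^*\fock(\E_q)$. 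Since $P$ is abelian this also equals $\fock(\E_{ps})^*\fock(\E_{qs})$. This is equation \eqref{eq:fullness-mi} and it says that at the level of the two-letter building blocks $\mi((p,q))$, we may translate both letters by an arbitrary element of $P$ without changing the subalgebra.

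The core of the argument is then a combinatorial normalization: given an arbitrary neutral word $\beta$, I want to produce another neutral word $\beta'$ of the same even length with $K(\beta')=K(\beta)$, $\mi(\beta)\subset\mi(\beta')$, and such that every even-position letter of $\beta'$ equals the subsequent odd-position letter. I would do this by induction on the word length, at each step peeling off the last pair $(p_{2l+1},p_{2l+2})$, applying the inductive normalization to the truncated word $\sigma$ to get $\sigma'$, and then multiplying every letter of $\sigma'$ on the right by $p_{2l+1}$ and the trailing pair on the right by $q_{2l}$ (the last letter of $\sigma'$), using commutativity of $P$ to align the new junction letter $q_{2l}p_{2l+1}=p_{2l+1}q_{2l}$. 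Throughout, each of these letter-translations only enlarges $\mi$ by \eqref{eq:fullness-mi}, and one checks $\dot{\beta'}=\dot\beta$ and $K(\beta')=K(\beta)$ directly from the definitions of $\dot{}$ and $K$, using that multiplying all letters on the right by a fixed element does not change the iterated quotient set. Once $\beta'$ has the alternating-equal-letters form and is neutral, its first and last letters coincide, so $\beta'$ is literally a word in $\langle\W_{\rm{sym}}\rangle$ (after padding with identity letters $e$ if the parity of the pairing requires it), and $K(\beta')=K(\beta)\supset S$ whenever $\beta$ is chosen with $K(\beta)=S$. Since $\mi(S)$ is by definition the closed span of the $\mi(\beta)$ over neutral $\beta$ with $K(\beta)=S$, and each such $\mi(\beta)\subset\mi(\beta')Q_S$ with $\beta'\in\langle\W_{\rm{sym}}\rangle$ and $K(\beta')\supset S$, this yields exactly the inclusion \eqref{eqn:I(S)includedinspan}.

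The main obstacle I anticipate is the bookkeeping in the inductive step: one must be careful that the prescribed right-translations genuinely preserve $\dot\beta$ and $K(\beta)$ (this is where abelianness is used essentially, since $\dot{}$ involves inverses and only in the abelian case do the translation factors cancel cleanly) and that the resulting $\beta'$ truly has the alternating form at \emph{every} even position, not just the last one — this is why the induction hypothesis must be stated for all shorter words simultaneously with the alternating-form conclusion built in. A secondary subtlety is making sure the padding by $e$-letters at the end is legitimate, i.e. that inserting a pair $(e,e)$ or adjusting to land in $\langle\W_{\rm{sym}}\rangle$ changes neither $K$ nor $\mi$, which is immediate since $\E_e=A$ acts as the identity. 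With these points handled, the theorem follows by citing the equivalence of (1) and (3) in \thmref{thm:Toep-vs-coefficient1}.
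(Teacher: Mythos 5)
Your proposal is correct and follows essentially the same route as the paper: the same fullness identity $\mi((p,q))=\mi((ps,qs))$, the same inductive normalization that right-translates the letters of the truncated word by $p_{2l+1}$ and the trailing pair by the last letter $q_{2l}$ of $\sigma'$ (using commutativity to align the junction), and the same padding by identity letters to land in $\langle\W_{\rm{sym}}\rangle$. The points you flag as potential obstacles (preservation of $\dot\beta$ and $K(\beta)$ under right translation, and the harmlessness of $e$-padding) are handled in the paper exactly as you anticipate.
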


\subsection{ax+b monoids}
Suppose now $R$ is an integral domain and let $R^\times$ denote the multiplicative monoid of nonzero elements of $R$. Let  $R\rtimes R^\times$ be the  $ax+b$-monoid of  $R$, in which the operation is $(b,a) (d,c) = (b+ad,ac) $.

\begin{thm} Let  $R\rtimes R^\times$ be the  $ax+b$-monoid of  the integral domain $R$, and let $A$ be a $\Cst$\nb-algebra. Let $\E=(\E_p)_{p\in P}$ be a product system over~$R\rtimes R^\times$ with coefficient algebra~$A$ such that $\varphi_p(A)\supset \Comp(\E_p)$ for all $p\in R\rtimes R^\times$. Suppose in addition that $\E_p$ is full for each $p\in R\rtimes R^\times$. Then $\E$ satisfies the assumptions of \thmref{thm:Toep-vs-coefficient1}, so that  $\Toepr(\E)$ is nuclear if and only if $A$ is nuclear.  
\begin{proof} 
Let $K = (R^\times)\inv R$ be the fraction field of $R$. Then $R\rtimes R^\times$ embeds in the group $K\rtimes K^*$, which is amenable. Left multiplication by $A$ contains the compacts by assumption, so we only need to verify that the inclusion \eqref{eqn:I(S)includedinspan} holds.

Notice that since $\{(a,1)\mid a\in R\}$ is an abelian submonoid of $R\rtimes R^\times$, it follows as in the proof of \thmref{thm:abelian} that 
\[
\mi(\beta)\subset\clsp\{b\mid b\in \mi(\beta'), \ \beta'\in W_{\rm{sym}}(R\rtimes\{1\}), K(\beta')=K(\beta)\}
\]
in case $b_i=1$ for all $i=1,\ldots, 2l$. Also, in case $b_i=1$ for all~$i$ except for $i=1$ and $i=2l$, we have $b_1=b_{2l}$ and 
\[
\mathcal{I}(\beta)\subset\tilde{\fock}(\E_{(0,b_1)})^*\mathcal{I}(\tilde{\beta})\tilde{\fock}(\E_{(0,b_1)}),
\]
where $\tilde{\beta}=((a_1,1), (a_2,1),\ldots, (a_{2l-1},1),(a_{2l},1))$. Since $\tilde{\beta}$ is neutral because~$\beta$ is, we obtain that
\[
\mi(\beta)\subset\clsp\{b\mid b\in \mi(\beta'),\  \beta'\in W_{\rm{sym}}(R\rtimes R^\times), K(\beta')=K(\beta)\}.
\]

The general case follows from the above if we  show that for every word $\beta\in \W$ (not necessarily neutral) there exist $\tilde{\beta}\in \W^l(R\rtimes\{1\})$ and $c_1, c_{2l}\in R^\times$ such that $\dot{\beta}=(0,c_1)^{-1}\dot{\tilde{\beta}}(0,c_{2l})$, $K(\beta)\subset K(((0,c_1),e)\tilde{\beta}(e,(0,c_{2l})))$ and $\mathcal{I}(\beta)\subset \tilde{\fock}(\E_{(0,c_1)})^*\mathcal{I}(\tilde{\beta})\tilde{\fock}(\E_{(0,b_1)})Q_{K(\beta)}. $ To do so we begin by proving by induction that given a word $\beta=(p_1,p_2,\ldots, p_{2l-1}, p_{2l})\in \W^l$ in  $R\rtimes R^\times$, with  $p_i=(a_i,b_i)\in R\rtimes R^\times$ for each $i=1, \ldots, 2l$, there exists a word $\beta'=(q_1,q_2,\ldots, q_{2l-1},q_{2l})\in \W^l,$ where $q_i=(c_i,d_i)$ for $i=1,\ldots, 2l$, with $\dot{\beta'}=\dot{\beta}$,  $K(\beta')=K(\beta)$ and $\mi(\beta)\subset\mi(\beta')$ such that for every even letter $q_{2j}$ in $\beta'$, its coordinate $d_{2j}$ coincides with the corresponding coordinate $d_{2j+1}$ in the subsequent letter $q_{2j+1}$.

As in the proof of \thmref{thm:abelian}, we have that for all $a_1, a_2\in R$ and $b_1,b_2\in R^\times$ 
    $$\mi(((a_1,b_1),(a_2,b_2)))=\mi(((da_1,db_1),(da_2,db_2)))$$
for all $d\in R^\times$ since $\E_{(0,d)}$ is full. So given a word $\beta=(p_1,p_2,p_3,p_4)\in\W^2$ with $p_i=(a_i,b_i)\in R\rtimes R^\times$, 
   we let $\beta'\coloneqq((0,b_3)p_1,(0,b_3)p_2,(0,b_2)p_3,(0,b_2)p_4)\in \W^2$; then $\dot{\beta'}=\dot{\beta},$ $K(\beta')=K(\beta)$ and $\mi(\beta)\subset\mi(\beta')$. In addition, if we write $\beta'=(q_1,q_2,q_3,q_4)$ and $q_i=(c_i,d_i)\in R\rtimes R^\times $, we see that  $d_2=d_3$ because $R$ is commutative, so that $\beta'$ has the desired properties.

Assume as induction hypothesis that for some fixed $l\geq 3$, for every word $\sigma\in \W^s$ with $s\leq l$ there exists $\sigma'\in \W^s$ with $\dot{\sigma'}=\dot{\sigma}$, $K(\sigma')=K(\sigma)$,  and $\mathcal{I}(\sigma)\subset\mathcal{I}(\sigma')$ such that the second coordinate of every even letter $q_{2j}$ in $\sigma'$ coincides with the second coordinate in the subsequent letter $q_{2j+1}$. Let $\beta=(p_1,p_2,\ldots,p_{2l+1}, p_{2l+2})\in \W^{l+1}$ be a word of even length $l+1$. Proceeding as in the proof of \thmref{thm:abelian}, we write $\beta=\sigma(p_{2l+1},p_{2l+2})$ with $\sigma=(p_1, \ldots, p_{2l-1},p_{2l})\in \W^l$ and let $\sigma'=(q_1,\ldots, q_{2l})\in \W^l$ with $q_i=(c_i,d_i)\in R\rtimes R^\times$ be as in the induction hypothesis for~$\sigma$. Putting \begin{align*}
\beta'&=\big((0,b_{2l+1})\cdot\sigma'\big) \big((0,d_{2l})\cdot(p_{2l+1},p_{2l+2}) \big)
\end{align*} we see that $\beta'$ satisfies the required properties, so that the induction hypothesis also holds for $\beta$. Hence it also holds for every word in $\W(R\rtimes R^\times)$. 

Now let $\beta=(p_1,p_2,\ldots, p_{2l-1}, p_{2l}) \in \W^l$. We write $p_i=(a_i,b_i)\in R\rtimes R^\times$ for $i=1, \ldots, 2l$. By the above we may assume that $b_{2i}=b_{2i+1}$ for $1\leq i\leq l-1.$ Using that $\fock(\E_{(a,b)})=\fock(\E_{(a,1)})\fock(\E_{(0,b)})$ on the Fock space and that for all $b\in R^\times$ the left action of~$A$ on $\E_{(0,b)}$ contains the compact operators, we deduce that $\mi(\beta)\subset \mi(\beta')Q(\beta),$ where $\beta'=(q_1,q_2,\ldots, q_{2l})$ is the word whose first and last letters coincide with those of $\beta$, and $q_i=(a_i,1)$ for $i=2,3,\ldots, 2l-1$.  Since $\beta'$, in turn, satisfies $\mi(\beta')=\mi\big(((0,b_1),e)\tilde{\beta}(e,(0,b_{2l}))\big)$ where $\tilde{\beta}=((a_1,1),(a_2,1),\ldots, (a_{2l},1))\in \W(R\rtimes\{1\}),$ the conclusion now follows as in \thmref{thm:abelian}.
\end{proof}

\end{thm}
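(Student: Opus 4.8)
The plan is to follow the same strategy as in \thmref{thm:abelian}, but now dealing separately with the two "coordinates" of an element $(a,b)\in R\rtimes R^\times$, exploiting the fact that the additive part $R\rtimes\{1\}$ is an abelian submonoid while the multiplicative part $R^\times$ is commutative and acts by full correspondences. First I would record two reductions. The first is that, as in the abelian case, the fact that each $\E_{(0,d)}$ is full gives the identity
\[
\mi\bigl(((a_1,b_1),(a_2,b_2))\bigr)=\mi\bigl(((0,d)(a_1,b_1),(0,d)(a_2,b_2))\bigr)\qquad\text{for all }d\in R^\times,
\]
obtained by inserting $\fock(\E_{(0,d)})\fock(\E_{(0,d)})^{*}$, which acts as the identity on the relevant summands. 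The second reduction is that whenever the interior letters of a neutral word are all of the form $(a,1)$ and only the first and last letters carry a nontrivial multiplicative part (which must then be equal, say $(0,b_1)=(0,b_{2l})$ by neutrality), one can peel these off via $\mathcal I(\beta)\subset\tilde\fock(\E_{(0,b_1)})^{*}\mathcal I(\tilde\beta)\tilde\fock(\E_{(0,b_1)})$ with $\tilde\beta$ a word in $R\rtimes\{1\}$; since $R\rtimes\{1\}$ is abelian, $\tilde\beta$ can be replaced by a symmetric word as in \thmref{thm:abelian}, and conjugation by $\tilde\fock(\E_{(0,b_1)})$ keeps us inside $\langle\W_{\rm{sym}}\rangle$.

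The core of the argument is then an induction, entirely parallel to the one in \thmref{thm:abelian}, showing that for every word $\beta=(p_1,\dots,p_{2l})\in\W^l$ over $R\rtimes R^\times$, with $p_i=(a_i,b_i)$, there is a word $\beta'=(q_1,\dots,q_{2l})$ with $q_i=(c_i,d_i)$ such that $\dot{\beta'}=\dot\beta$, $K(\beta')=K(\beta)$, $\mi(\beta)\subset\mi(\beta')$, and $d_{2j}=d_{2j+1}$ for $1\le j\le l-1$. The base case $l=2$ is handled by setting $\beta'\coloneqq((0,b_3)p_1,(0,b_3)p_2,(0,b_2)p_3,(0,b_2)p_4)$; commutativity of $R$ forces the middle two second-coordinates to agree, and the first reduction gives the inclusion of $\mi$. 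For the inductive step one writes $\beta=\sigma(p_{2l+1},p_{2l+2})$, applies the hypothesis to $\sigma$ to get $\sigma'$ with $q_i=(c_i,d_i)$, and forms
\[
\beta'=\bigl((0,b_{2l+1})\cdot\sigma'\bigr)\bigl((0,d_{2l})\cdot(p_{2l+1},p_{2l+2})\bigr),
\]
i.e. multiply every letter of $\sigma'$ on the left by $(0,b_{2l+1})$ and the last two letters of $\beta$ on the left by $(0,d_{2l})$. One checks $\dot{\beta'}=\dot\beta$ and $K(\beta')=K(\beta)$ from the group computation, and $\mi(\beta)\subset\mi(\beta')$ from the first reduction; the new interior matching $d$-coordinates come from commutativity of $R^\times$ together with the matching already present in $\sigma'$.

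Once the interior multiplicative coordinates have been equalized, one uses the factorization $\fock(\E_{(a,b)})=\fock(\E_{(a,1)})\fock(\E_{(0,b)})$ on the Fock module, together with the hypothesis that the left action on $\E_{(0,b)}$ contains the compacts, to absorb all the interior $(0,b)$-factors into the coefficient algebra, reducing $\mi(\beta)$ (up to a compression by some $Q(\beta)$) to $\mi(\beta')$ for a word $\beta'$ whose interior letters all have trivial multiplicative part and whose first and last letters coincide with those of $\beta$. Then $\mi(\beta')=\mi\bigl(((0,b_1),e)\tilde\beta(e,(0,b_{2l}))\bigr)$ with $\tilde\beta\in\W(R\rtimes\{1\})$ neutral, and the second reduction together with the abelian argument of \thmref{thm:abelian} places this in the closed span of $bQ_S$ with $b\in\mi(\beta'')$, $\beta''\in\langle\W_{\rm{sym}}\rangle$, $K(\beta'')\supset K(\beta)$. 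This verifies \eqref{eqn:I(S)includedinspan}; since $K\rtimes K^{*}$ is amenable, \thmref{thm:Toep-vs-coefficient1} then gives the equivalence of nuclearity of $A$ and of $\Toepr(\E)$. The main obstacle I anticipate is bookkeeping: keeping the group-theoretic identities $\dot{\beta'}=\dot\beta$ and $K(\beta')=K(\beta)$ straight through the left-multiplications (the $ax+b$ multiplication is non-abelian, so one must be careful that left multiplication by $(0,d)$ interacts correctly with the quotient-set and ideal computations), and ensuring the interplay between the additive "abelian" manipulation and the multiplicative "fullness" manipulation does not disturb the matchings established at earlier inductive stages.
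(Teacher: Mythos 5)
Your proposal is correct and follows essentially the same route as the paper: the same fullness identity $\mi(((a_1,b_1),(a_2,b_2)))=\mi(((0,d)(a_1,b_1),(0,d)(a_2,b_2)))$, the same inductive equalization of interior multiplicative coordinates with the identical choices of $\beta'$ in the base case and inductive step, the same absorption of interior $(0,b)$-factors via $\fock(\E_{(a,b)})=\fock(\E_{(a,1)})\fock(\E_{(0,b)})$, and the same reduction to the abelian case of \thmref{thm:abelian} after conjugating off the first and last multiplicative parts. No substantive differences to report.
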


\subsection{Baumslag--Solitar monoids}

For each pair of nonzero integers $m,n$ the Baumslag-Solitar group $BS(m,n)$ is
\[BS(m,n) \coloneqq \langle a,b\mid ab^ma\inv = b^n \rangle .\]
 The presentation can be restated without using negative exponents, so that it makes sense for monoids. Thus, when  $m,n \geq 1$ the Baumslag--Solitar monoid is defined 
by \[BS^+(m,n) \coloneqq \langle a,b\mid ab^m = b^n a\rangle_+,\]
and when $m\geq 1$ and $n\leq -1$ it is defined by
\[BS^+(m,n) \coloneqq \langle a,b\mid b^{-n}ab^m = a\rangle_+.\]
The cases  $m\leq -1$, $n\leq -1$ and $m\leq -1$, $n\geq 1$ can be reduced to the two above by exchanging $m$ and $n$.
 We will thus assume that we are either in  
 
 Case (1): $m \geq 1$, $n \geq 1$ or in 
 
 Case (2): $m\geq 1$, $n\leq -1$.

Every element~$p$ in $BS^+(m,n)$ has a unique normal form 
\[
p=b^{c_0}ab^{c_1}a\ldots b^{c_{k-1}}a b^{c_k},
\]
in which  $k \in \NN$ and  $0\leq c_i\leq |n|-1 $ for $0\leq i\leq k-1$ with $c_k\in \NN$ in case (1) and $c_k\in \ZZ$ in case (2), except that $c_0 \geq 0$ always. This is obtained by pushing powers of $b$ to the right using $ b^na \rightsquigarrow ab^m $ until the normal form is achieved, see e.g. \cite[Section~2]{Spielberg12}. Here $k$ is the number of $a$'s in any  expression of $p$; it is called the {\em height} of $p$ and is denoted $\theta(p)\coloneqq k\in \NN$. 
The {\em stem} of $p$ is the product $\mathrm{stem}(p)\coloneqq b^{c_0}ab^{c_1}a\ldots b^{c_{k-1}}a$, so that $p=\mathrm{stem}(p)b^{c_k}$.

Using the normal form for elements of $BS(m,n)$ one can show that the monoid $BS^+(m,n)$ embeds canonically in the group $BS(m,n)$ as the submonoid generated by $a$ and $b$.
The pair $(BS(m,n),BS^+(m,n))$ is a weak quasi-lattice order, that is, $BS^+(m,n)$ is a group-embeddable right LCM monoid with no nontrivial invertible elements. 

\begin{rem}\label{rem:upper-bounds} 
Recall from  \cite[Lemma 5.5]{aHRT} that if $p$ and $q$ have a common upper bound in $BS^+(m,n)$ (i.e. if $pBS^+(m,n)\cap qBS^+(m,n)\neq\emptyset$) and $\theta(p)\leq \theta(q)$, then $\rm{stem}(p)$ is a prefix of $\rm{stem}(q)$. This will be useful in the proof of the next result.
\end{rem} 

We are now ready to show that every full product system  over a Baumslag--Solitar monoid with $m,n\geq 1$ such that the left action contains the compact operators automatically satisfies the assumptions in \thmref{thm:Toep-vs-coefficient1}. For the case $m\geq 1$, $n\leq -1$ see \remref{rem:case-mn-negative}.

\begin{thm}\label{thm:Baumslag} Suppose $m,n\geq 1$ and let  $\E=(\E_p)_{p\in BS^+(m,n)}$ be a product system over the
Baumslag--Solitar monoid~$BS^+(m,n)$ with coefficient algebra~$A$ such that $\varphi_p(A)\supset \Comp(\E_p)$ for all $p\in BS^+(m,n)$. Suppose in addition that $\E_p$ is full for each $p\in BS^+(m,n)$. Then~$\E$ satisfies the assumptions of \thmref{thm:Toep-vs-coefficient1}, and the following hold:
\begin{enumerate}
    \item $\Toepr(\E)$ is exact if and only if $A$ is exact;
    \smallskip\item if $\gcd(m,n) =1$, then $\Toepr(\E)$ is nuclear if and only if $A$ is nuclear.
    \end{enumerate}
    \begin{proof} Let $\beta=(p_1,p_2,\ldots, p_{2l})\in \W^l$ be a neutral word in $BS^+(m,n).$ We will prove by induction on $l$ that there exists $\beta'\in \langle\W_{\rm{sym}}\rangle$, with even length at most $l$, $K(\beta)\subset K(\beta')$ and $\mi(\beta)\subset \mi(\beta')Q_{K(\beta)}.$ 

    First notice that because $\fock(\E_p)^*\fock(E_p)\subset \fock(\braket{\E_p}{\E_p})$, using Remark \ref{rem:upper-bounds} we may assume that for each $1\leq i\leq l$ one of the letters $p_{2i-1}$ or $p_{2i}$ has height zero, and hence belongs to $\langle b\rangle ^+$. By the same reasoning we may assume that the letter of positive height starts with~$a$ in normal form.
    
       Now let $\beta$ as above with $l=2$, say $\beta=(p_1,p_2,p_3,p_4)$. Since $\beta$ is neutral, we must have either $\theta(p_2)=\theta(p_3)=0$ or $\theta(p_1)=\theta(p_4)=0.$

       \textbf{Case 1:} $\theta(p_2)=\theta(p_3)=0$. In this case $\beta=(p_1, b^{k_1}, b^{k_2}, p_4)$ with $k_1, k_2\in \NN$. If $k_2\geq k_1$, we use that the action of $A$ on $\E_{b^{k_1}}$ contains the compact operators to conclude that $$\mi(\beta)\subset \mi(\beta')Q(\beta),$$ where $\beta'=(b^{k_2-k_1}p_1, p_4)$ is neutral with $K(\beta)\subset K(\beta')$. Similarly in case $k_1\geq k_2$ we get $\mi(\beta)\subset \mi(\beta')Q(\beta)$, with $\beta'=(p_1, b^{k_1-k_2}p_4)$.

          \textbf{Case 2:} $\theta(p_1)=\theta(p_4)=0$. In this case $\beta=(b^{k_1}, p_2, p_3, b^{k_2})$ with $k_1, k_2\in \NN$. Since $\beta$ is neutral, we obtain $p_2p_3\inv=b^{k_1-k_2}.$ In case $k_1\geq k_2$, we use that $p_2=b^{k_1-k_2}p_3$ and that the left action of $A$ on $\E_{p_3}$ contains the compact operators to get that $\mi(\beta)\subset \mi(\beta')Q_{K(\beta)}$ with $\beta'=(b^{k_1}, b^{k_1})$. Similarly, in case $k_1-k_2<0$ we see that $p_3=b^{k_2-k_1}p_2$, and so $\mi(\beta)\subset \mi(\beta')Q_{K(\beta)}$ with $\beta'=(k_2,k_2).$

Suppose $\beta=(p_1,p_2,\ldots, p_{2l+1}, p_{2l})\in\W$ is a neutral word in $BS^+(m,n)$ as above with even length $l+1\geq 3$. We separate the proof into two cases.

\textbf{Case 1:} $\theta(p_1)>0$. In this case there must be $1\leq i\leq l$ such that $\theta(p_{2i})=\theta(p_{2i+1})=0$ because $\beta$ is neutral. Then we proceed as in Case 1 above to find a neutral word $\beta_l\in \W^l$ such that $K(\beta)\subset K(\beta_l)$ and $\mi(\beta)=\mi(\beta_l)Q_{K(\beta)}.$ By induction there exists $\beta'\in \langle\W_{\rm{sym}}\rangle$ with $K(\beta)\subset K(\beta_l)\subset K(\beta') $ and $\mi(\beta)\subset \mi(\beta_l)Q_{K(\beta)}\subset \mi(\beta')Q_{K(\beta)}$ as wanted.

\textbf{Case 2:} $\theta(p_1)=0$.  In this case there must be $1\leq i\leq l$ such that $\theta(p_{2i})\neq 0$ and $\theta(p_{2i+1})\neq0$. Write $\beta$ as the concatenation $\beta=\sigma_1\beta_1\sigma_2$ where $\beta_1=(p_{2i-1},p_{2i}, p_{2i+1},p_{2i+2})$, $\sigma_1\in \W^{i-1}$, and $\sigma_2\in \W^{l-i}.$ It follows from our assumptions on $\beta$ that $\theta(p_{2i-1})=0=\theta(p_{2i+2}).$ If either $\theta(p_{2i-2})=0$ or $\theta(p_{2i+3})=0$ we proceed as in the proof of Case 1.

Suppose $\theta(p_{2i-2})\neq 0$. Let $k\geq 0$ be such that $p_{2i-1}=b^k$. Using the relations in $BS^+(m,n)$ we can find a positive power $c\in \NN$ such that $b^cp_{2i-3}=p_{2i-3}'b^d$ for some $p'_{2i-3}\in BS^+(m,n)$ and some $d\geq k$. Then we use that $$\mi((p_{2i-3},p_{2i-2}))\subset \mi((b^cp_{2i-3},b^cp_{2i-2})),$$ that $\fock(\E_{p_{2i-3}'b^{d}})=\fock(\E_{p_{2i-3}'b^{d-k}})\fock(\E_{b^k})$ and that the action of $A$ on $\E_{b^k}$ contains the compact operators to get a neutral word $\beta_l$ of even length~$l$ such that $K(\beta)\subset K(\beta_l)$ and $\mi(\beta)\subset \mi(\beta_l)Q_{K(\beta)}.$ Again by induction there exists $\beta'\in \langle\W_{\rm{sym}}\rangle$ with $K(\beta)\subset K(\beta_l)\subset K(\beta') $ and $\mi(\beta)\subset \mi(\beta_l)Q_{K(\beta)}\subset \mi(\beta')Q_{K(\beta)}$ as required. This gives the case when the last letter of $\sigma_1$ has positive height. The case when the first letter of $\sigma_2$ has positive height follows from this by taking adjoints.

      Part (1) now follows from 
\thmref{thm:Toep-vs-coefficient-exactness} and \cite[Proposition~25.12]{Exel:Partial_dynamical} because $BS(m,n)$ is a one-relator group hence is  exact by \cite[Theorem~2.1]{G2002}. Part (2) follows from \thmref{thm:Toep-vs-coefficient1} because when $\gcd(m,n) =1$ there is an embedding of $B^+(m,n)$ into an amenable group $G$ by \proref{pro:bs-amenable-emb}.
    \end{proof}
\end{thm}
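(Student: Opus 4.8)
The plan is to verify that the product system $\E$ over $BS^+(m,n)$ satisfies the hypotheses of \thmref{thm:Toep-vs-coefficient1}, namely that every nontrivial constructible right ideal $S$ satisfies the inclusion \eqref{eqn:I(S)includedinspan}; once this is done, parts (1) and (2) follow immediately by invoking \thmref{thm:Toep-vs-coefficient-exactness} together with \cite[Proposition~25.12]{Exel:Partial_dynamical} (using exactness of $BS(m,n)$ as a one-relator group), and \thmref{thm:Toep-vs-coefficient1} together with the amenable embedding of \proref{pro:bs-amenable-emb} in the coprime case. Since $\Toepr(\E)^G$ is spanned by the $\mi(\beta)$ for neutral words $\beta$, it suffices to show that for every neutral word $\beta=(p_1,\ldots,p_{2l})$ there is a word $\beta'\in\langle\W_{\rm{sym}}\rangle$ of even length at most $l$ with $K(\beta)\subset K(\beta')$ and $\mi(\beta)\subset\mi(\beta')Q_{K(\beta)}$. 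I would prove this by induction on $l$.

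The first step is a pair of normalizing reductions available because $BS^+(m,n)$ is right LCM and weak quasi-lattice ordered. Using $\fock(\E_p)^*\fock(\E_p)\subset\fock(\braket{\E_p}{\E_p})$ and \remref{rem:upper-bounds} (if $p_{2i-1},p_{2i}$ have a common upper bound then the shorter stem is a prefix of the longer), one may assume in each consecutive pair $(p_{2i-1},p_{2i})$ that one of the two letters has height zero, hence lies in $\langle b\rangle^+$, and that the letter of positive height begins with $a$ in normal form. This is the analogue of the ``$b_i=1$'' reductions used in the $ax+b$ case and it is what makes the Baumslag--Solitar relation tractable. The base case $l=2$ then splits into the two possibilities forced by neutrality: either $\theta(p_2)=\theta(p_3)=0$ (so $\beta=(p_1,b^{k_1},b^{k_2},p_4)$, and absorbing the smaller $b$-power into a neighbour using that the left action on $\E_{b^{\min(k_1,k_2)}}$ contains the compacts produces a neutral word of length one, which is symmetric), or $\theta(p_1)=\theta(p_4)=0$ (so $\beta=(b^{k_1},p_2,p_3,b^{k_2})$ with $p_2p_3^{-1}=b^{k_1-k_2}$, and again one of $p_2,p_3$ is a $b$-power times the other, so the compact containment lets us replace $\mi(\beta)$ by $\mi((b^{k},b^{k}))Q_{K(\beta)}$).

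For the inductive step with $\beta$ of even length $l+1\geq 3$, I would again split on $\theta(p_1)$. If $\theta(p_1)>0$, neutrality forces some consecutive pair $(p_{2i},p_{2i+1})$ to have both heights zero, so the Case~1 argument of the base case contracts those two $b$-power letters into a single letter, yielding a neutral word $\beta_l\in\W^l$ with $K(\beta)\subset K(\beta_l)$ and $\mi(\beta)\subset\mi(\beta_l)Q_{K(\beta)}$; the induction hypothesis applied to $\beta_l$ finishes this case. If $\theta(p_1)=0$, there is a consecutive pair $(p_{2i},p_{2i+1})$ with both heights positive; write $\beta=\sigma_1\beta_1\sigma_2$ around $\beta_1=(p_{2i-1},p_{2i},p_{2i+1},p_{2i+2})$, where $\theta(p_{2i-1})=\theta(p_{2i+2})=0$. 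If $\theta(p_{2i-2})=0$ or $\theta(p_{2i+3})=0$ we are back in the previous situation. Otherwise, say $\theta(p_{2i-2})\neq0$: with $p_{2i-1}=b^k$, the Baumslag--Solitar relations give a positive power $c$ with $b^cp_{2i-3}=p_{2i-3}'b^d$ and $d\geq k$; then using $\mi((p_{2i-3},p_{2i-2}))\subset\mi((b^cp_{2i-3},b^cp_{2i-2}))$ (fullness), the factorization $\fock(\E_{p_{2i-3}'b^d})=\fock(\E_{p_{2i-3}'b^{d-k}})\fock(\E_{b^k})$, and that the action on $\E_{b^k}$ contains the compacts, one again shortens to a neutral word $\beta_l$ of length $l$ with $K(\beta)\subset K(\beta_l)$ and $\mi(\beta)\subset\mi(\beta_l)Q_{K(\beta)}$, and applies induction; the symmetric subcase with $\theta(p_{2i+3})\neq0$ follows by taking adjoints.

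The main obstacle is the bookkeeping in the last inductive case: one must push a $b$-power past a positive-height letter using $ab^m=b^na$ in a way that \emph{creates} a factor $\fock(\E_{b^k})$ which can be cancelled against the $b^k$ coming from $p_{2i-1}$, all while controlling that $K(\beta)$ only grows (never shrinks) and that the heights strictly decrease so the induction terminates. Getting the exponent arithmetic right — finding $c$ and $d\geq k$ — is where the non-abelian structure of the monoid really enters, and it is the step most likely to need care; everything else is a transcription of the patterns already established for abelian monoids and $ax+b$-monoids.
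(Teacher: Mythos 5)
Your proposal follows the paper's own proof essentially step for step: the same reduction of each consecutive pair to one height-zero letter via \remref{rem:upper-bounds}, the same two-case analysis in the base case $l=2$ and in the inductive step according to $\theta(p_1)$, the same use of fullness and the compact containment to contract $b$-powers (including the $b^cp_{2i-3}=p_{2i-3}'b^d$ manoeuvre with $d\geq k$ in the delicate subcase), and the same appeals to \thmref{thm:Toep-vs-coefficient-exactness}, \cite[Theorem~2.1]{G2002} and \proref{pro:bs-amenable-emb} for the conclusions. The argument is correct and is the paper's argument.
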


\begin{rem}\label{rem:case-mn-negative}
In case $m\geq 1$, $n\leq -1$, every two elements $p,q\in B^+(m,n)$ with a common upper bound in $BS^+(m,n)$ are comparable. See, for example, \cite[Proposition~5.11]{ABCD}. Hence every product system over~$B^+(m,n)$ is automatically compactly aligned, and so \corref{cor:compactly-align} applies.
    \end{rem}

\subsection*{The monoid $BS^+(m,n)$ embeds in an amenable group if $\gcd(m,n) =1$}
\begin{prop}\label{pro:bs-amenable-emb}
For every   $m,n \geq 1$ there is a unique monoid homomorphism 
\[
\pi: BS^+(m,n) \to M_n^+(\ZZ) \subset \operatorname{GL}_2(\QQ)
\]
such that 
\[
a \mapsto A := \matr{n}{0}{m}\qquad \text {and} \qquad b\mapsto 
B:= \matr{1}{1}{1}.
\]
 If  $\gcd(m,n) =1$, then $\pi$ is an embedding of $BS^+(m,n)$ into the subgroup of $\operatorname{GL}_2(\QQ)$ generated by $A$ and $B$, which is amenable.

\begin{proof}
For $k,l \in \NN$, we compute    $AB^k = \matr{n}{kn}{m}$
and
$B^l A = \matr{n}{lm}{m}$.

Let $d:=\gcd(m,n)$, and set  $k =  m/d$ and $l = n/d$; then 
\[
AB^{m/d} = \matr{n}{mn/d}{m}
 = B^{n/d}A.
\]

If $m,n$ are relatively prime, this is exactly the relation defining  $BS^+(m,n)$. In general write $B^m = (B^{m/d})^d$ and move the $d$ blocks $B^{m/d}$ one by one using the above relation to obtain the relation defining $B^+(m,n)$.

Suppose now that the word  $p(a,b)$ in $a$ and $b$  representing an element of $B^+(m,n)$ is  in (R) normal form. The (R) normal form is the one obtained by pushing every possible $b$ to the right until the substitution $ b^na \rightsquigarrow ab^m $ can no longer be applied.
Specifically, 
\[
p(a,b) =  b^{j_0} ab^{j_1} a b^{j_2} a \ldots ab^{j_\ell} 
\]
with  $0\leq j_i < n$ for each $i = 0,1, 2, \ldots , \ell-1$ and $j_\ell\in \ZZ$ (in fact, $j_\ell\in\NN$ in case (1)), where $\ell$ is the number of $a$'s in $p(a,b)$. 
The corresponding product of matrices 
is 
\[
p(A,B) = \matr{n^\ell}{ j_0 m^{\ell} + n j_1 m^{\ell-1} + \cdots +n^{\ell -1}j_{\ell -1} m + n^{\ell} j_\ell }{m^\ell}
\]

Assume now that there exist two different normal forms $p$ and $p'$ (necessarily corresponding to two different elements of $BS^+(m,n)$  that produce the same matrix $p(A,B) = p'(A,B)$. 
Then $p(A,B)_{1,1} = p'(A,B)_{1,1}$  implies that $\ell = \ell'$.
If we then take  $p(A,B)_{1,2} = p'(A,B)_{1,2}$ modulo $n$ we get
\[
(j_0 - j'_0) m^\ell   = 0 \pmod n.
\]

Assuming $\gcd(m,n) =1$ we see that $j_0 - j'_0 =0 \pmod n$, and hence $j_0 =j'_0$ because both $j_0$ and $j'_0$ are between $0$ and $n-1$. We can then factor out the first power of $B$ and the first $A$ in both $p$ and $p'$.
This gives two different shorter normal forms in $BS^+(m,n)$ (in the $a$-length) that produce the same matrix. Iterating this procedure, we arrive at a contradiction.
We conclude that the representation is an embedding when $d=1$.

    The subgroup of $\operatorname{GL}_2(\QQ)$ generated by the matrices $A$ and $B$ is contained in the 
   group of upper triangular matrices, which is solvable, hence  amenable.
\end{proof}
\end{prop}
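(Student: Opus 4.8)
The plan is to construct $\pi$ directly on generators by declaring $\pi(a) = A$ and $\pi(b) = B$, and then to verify that the image satisfies the defining relation of $BS^+(m,n)$; since $a$ and $b$ generate the monoid, this yields both existence and uniqueness at once. First I would record the elementary identities $B^k = \matr{1}{k}{1}$, $AB^k = \matr{n}{nk}{m}$ and $B^l A = \matr{n}{lm}{m}$, so that $AB^k = B^l A$ holds precisely when $nk = lm$. Writing $d = \gcd(m,n)$ and taking $k = m/d$, $l = n/d$ gives $AB^{m/d} = B^{n/d}A$. When $\gcd(m,n) = 1$ this is literally the relation $AB^m = B^n A$, so $\pi$ descends to a monoid homomorphism. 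For general $m,n$, I would bootstrap: writing $B^m = (B^{m/d})^d$ and moving the $d$ copies of $B^{m/d}$ across $A$ one at a time via $AB^{m/d} = B^{n/d}A$ produces $AB^m = B^n A$, so the homomorphism exists for all $m, n \ge 1$.

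To prove injectivity when $\gcd(m,n) = 1$, I would use the $(R)$ normal form recalled just before the statement: every element of $BS^+(m,n)$ is uniquely $p(a,b) = b^{j_0}ab^{j_1}a\cdots ab^{j_\ell}$ with $0 \le j_i < n$ for $i < \ell$ and $j_\ell \in \NN$, where $\ell$ is the number of $a$'s. A short induction on $\ell$, using the identities above, gives
\[
p(A,B) = \matr{n^\ell}{\, j_0 m^\ell + n j_1 m^{\ell-1} + \cdots + n^{\ell-1} j_{\ell-1} m + n^\ell j_\ell\,}{m^\ell}.
\]
Now suppose two normal forms $p, p'$ satisfy $p(A,B) = p'(A,B)$. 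Comparing $(1,1)$ entries forces $\ell = \ell'$; reducing the $(1,2)$ entries modulo $n$ leaves $j_0 m^\ell \equiv j_0' m^\ell \pmod n$, and since $\gcd(m,n) = 1$ and $0 \le j_0, j_0' < n$ this forces $j_0 = j_0'$. Cancelling the leading $b^{j_0}a$ from both words yields two shorter normal forms of equal $a$-length with equal matrices, and iterating gives $j_i = j_i'$ for all $i < \ell$; the remaining $(1,2)$ entry then forces $j_\ell = j_\ell'$, so $p = p'$ and $\pi$ is injective.

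Finally, both $A$ and $B$ are upper triangular, so the subgroup of $\operatorname{GL}_2(\QQ)$ they generate lies inside the group of invertible upper triangular $2\times 2$ matrices over $\QQ$, which is metabelian (an extension of the diagonal torus $\QQ^*\times\QQ^*$ by the additive group $\QQ$), hence solvable, hence amenable; amenability passes to the subgroup.

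The step I expect to be the main obstacle is the normal-form bookkeeping in the injectivity argument: one must check carefully that the $(R)$ reduction terminates in the stated shape, that the coefficient of $j_i$ in the $(1,2)$ entry is indeed $n^i m^{\ell-i}$, and that cancelling the leading block genuinely leaves the remaining words in normal form with strictly smaller $a$-length. Note that the coprimality hypothesis enters only at the very end, through the congruence modulo $n$; everything before that — including existence and uniqueness of $\pi$ — is valid for arbitrary $m, n \ge 1$.
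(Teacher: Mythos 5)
Your proposal is correct and follows essentially the same route as the paper's proof: the same computation of $AB^k$ and $B^lA$ yielding $AB^{m/d}=B^{n/d}A$, the same bootstrap to the defining relation for general $m,n$, the same normal-form matrix formula and mod-$n$ comparison of $(1,2)$ entries for injectivity, and the same upper-triangular/solvable argument for amenability. The only (harmless) difference is that you spell out the final step of the iteration ($j_\ell = j_\ell'$ from the residual $(1,2)$ entry) slightly more explicitly than the paper does.
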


\begin{rem}
    If $\gcd(m,n) \neq 1$, then the representation $\pi$ given in the proposition is not an embedding of $BS^+(m,n)$ into matrices.  Indeed, as seen above, the matrices satisfy $AB^{m/d} =  B^{n/d}A$, but since $n/d <n$
both $ab^{m/d}$ and $b^{n/d}a$ are in normal form, hence are different in $BS^+(m,n)$.
If we factor $A$ as  
\[
A := \matr{d}{0}{d} \matr{n/d}{0}{m/d}
\]
and use a slight modification of the proof of the lemma we see that the image of $\pi$ is isomorphic to $BS^+(m/d,n/d)$.
 The case $m=n= d$ presents an extreme failure of embeddability; indeed  
\[
 A := \matr{d}{0}{d}\qquad \text {and} \qquad 
B:= \matr{1}{1}{1}
\]
commute and generate a copy of $\NN^2$. We believe that a better understanding of this case could lead to a general amenable embedding of $BS^*(m,n)$.
\end{rem}

\begin{bibdiv}
  \begin{biblist}
  
  \bib{AEE}{article}{
   author={Abadie, Beatriz},
   author={Eilers, S\o ren},
   author={Exel, Ruy},
   title={Morita equivalence for crossed products by Hilbert
   $\Cst$\nb-bimodules},
   journal={Trans. Amer. Math. Soc.},
   volume={350},
   date={1998},
   number={8},
   pages={3043--3054},
   issn={0002-9947},
   review={\MR{1467459}},
   doi={10.1090/S0002-9947-98-02133-3},
}

\bib{Adji-Laca-Nilsen-Raeburn}{article}{
    author={Adji, Sriwulan},
author ={Laca, Marcelo},
author={Nilsen, May},
author={Raeburn, Iain},
     TITLE = {Crossed products by semigroups of endomorphisms and the
              {T}oeplitz algebras of ordered groups},
   JOURNAL = {Proc. Amer. Math. Soc.},
    VOLUME = {122},
      YEAR = {1994},
    NUMBER = {4},
     PAGES = {1133--1141},
      ISSN = {0002-9939},
      review={\MRref{1215024}{46L55}},
      doi= {10.2307/2161182},
}
\bib{albandik-meyer}{article}{
   author={Albandik, Suliman},
   author={Meyer, Ralf},
   title={Product systems over Ore monoids},
   journal={Doc. Math.},
   volume={20},
   date={2015},
   pages={1331--1402},
   issn={1431-0635},
   review={\MR{3452185}},
}
\bib{BLS2018}{article}{
    AUTHOR = {Brownlowe, Nathan},
    author={Larsen, Nadia S.},
    author={Stammeier, Nicolai},
     TITLE = {{$\Cst$}-algebras of algebraic dynamical systems and right
              {LCM} semigroups},
   JOURNAL = {Indiana Univ. Math. J.},
    VOLUME = {67},
      YEAR = {2018},
    NUMBER = {6},
     PAGES = {2453--2486},
      ISSN = {0022-2518},
       DOI = {10.1512/iumj.2018.67.7527},
       URL = {https://doi.org/10.1512/iumj.2018.67.7527},
}

\bib{BHLRETDS2012}{article}{
   author={Brownlowe, Nathan},
   author={An Huef, Astrid},
   author={Laca, Marcelo},
   author={Raeburn, Iain},
   title={Boundary quotients of the Toeplitz algebra of the affine semigroup
   over the natural numbers},
   journal={Ergodic Theory Dynam. Systems},
   volume={32},
   date={2012},
   number={1},
   pages={35--62},
   issn={0143-3857},
   review={\MR{2873157}},
   doi={10.1017/S0143385710000830},
}

\bib{Carlsen-Larsen-Sims-Vittadello:Co-universal}{article}{
  author={Carlsen, Toke M.},
  author={Larsen, Nadia S.},
  author={Sims, Aidan},
  author={Vittadello, Sean T.},
  title={Co-universal algebras associated to product systems, and gauge-invariant uniqueness theorems},
  journal={Proc. Lond. Math. Soc. (3)},
  volume={103},
  date={2011},
  number={4},
  pages={563--600},
  issn={0024-6115},
  review={\MRref{2837016}{2012h:46085}},
  doi={10.1112/plms/pdq028},
}

\bib{CaHR}{article}{
author={Clark, Lisa O.},
author={an Huef, Astrid},
author={Raeburn, Iain},
title={Phase transitions on the Toeplitz algebras of Baumslag-Solitar semigroups},
journal={Indiana Univ. Math. J.},
volume={65},
year={2016},  
pages={2137--2173},}

\bib{C:EMS2008}{article}{
   author={Cuntz, Joachim},
   title={$C^*$-algebras associated with the $ax+b$-semigroup over $\NN$},
   conference={
      title={$K$-theory and noncommutative geometry},
   },
   book={
      series={EMS Ser. Congr. Rep.},
      publisher={Eur. Math. Soc., Z\"{u}rich},
   },
   date={2008},
   pages={201--215},
   review={\MR{2513338}},
   doi={10.4171/060-1/8},
}
		
\bib{C-LiClay2010}{article}{
   author={Cuntz, Joachim},
   author={Li, Xin},
   title={The regular $C^\ast$-algebra of an integral domain},
   conference={
      title={Quanta of maths},
   },
   book={
      series={Clay Math. Proc.},
      volume={11},
      publisher={Amer. Math. Soc., Providence, RI},
   },
   date={2010},
   pages={149--170},
   review={\MR{2732050}},
}

\bib{C-LiJNG2011}{article}{
   author={Cuntz, Joachim},
   author={Li, Xin},
   title={$C^\ast$-algebras associated with integral domains and crossed
   products by actions on adele spaces},
   journal={J. Noncommut. Geom.},
   volume={5},
   date={2011},
   number={1},
   pages={1--37},
   issn={1661-6952},
   review={\MR{2746649}},
   doi={10.4171/JNCG/68},
}
		
\bib{CDLMathAnn2013}{article}{
   author={Cuntz, Joachim},
   author={Deninger, Christopher},
   author={Laca, Marcelo},
   title={$C^*$-algebras of Toeplitz type associated with algebraic number
   fields},
   journal={Math. Ann.},
   volume={355},
   date={2013},
   number={4},
   pages={1383--1423},
   issn={0025-5831},
   review={\MR{3037019}},
   doi={10.1007/s00208-012-0826-9},
}
		
\bib{CELY}{book}{
    author={Cuntz, Joachim},
author ={Echterhoff, Siegfried},
author={Li, Xin},
author={Yu, Guoliang},
series={Oberwolfach seminars},
title={K-Theory for Group $\Cst$\nb-Algebras and Semigroup $\Cst$\nb-Algebras},
  publisher={Birkh\"{a}user/Springer},
place={Cham},
volume={47},
doi={10.1007/978-3-319-59915-1},
year={2017},
     ISSN = {1661-237X},
pages={x--322}
}
\bib{DKKLL2022}{article}{
   author={Dor-On, A.},
   author={Kakariadis, E. T. A.},
   author={Katsoulis, E.},
   author={Laca, M.},
   author={Li, X.},
   title={C*-envelopes for operator algebras with a coaction and
   co-universal C*-algebras for product systems},
   journal={Adv. Math.},
   volume={400},
   date={2022},
   pages={Paper No. 108286, 40},
   issn={0001-8708},
   review={\MR{4387241}},
   doi={10.1016/j.aim.2022.108286},
}
\bib{DS2001}{article}{
   author={Dykema, Kenneth J.},
   author={Shlyakhtenko, Dimitri},
   title={Exactness of Cuntz-Pimsner $C^*$-algebras},
   journal={Proc. Edinb. Math. Soc. (2)},
   volume={44},
   date={2001},
   number={2},
   pages={425--444},
   issn={0013-0915},
   review={\MR{1880402}},
   doi={10.1017/S001309159900125X},
}
\bib{EL2013}{article}{
   author={Echterhoff, Siegfried},
   author={Laca, Marcelo},
   title={The primitive ideal space of the $\rm C^*$-algebra of the affine
   semigroup of algebraic integers},
   journal={Math. Proc. Cambridge Philos. Soc.},
   volume={154},
   date={2013},
   number={1},
   pages={119--126},
   issn={0305-0041},
   review={\MR{3002587}},
   doi={10.1017/S0305004112000485},
}
\bib{Exel:NewLook}{article}{
   author={Exel, Ruy},
   title={A new look at the crossed-product of a $C^*$-algebra by an
   endomorphism},
   journal={Ergodic Theory Dynam. Systems},
   volume={23},
   date={2003},
   number={6},
   pages={1733--1750},
   issn={0143-3857},
   review={\MR{2032486}},
   doi={10.1017/S0143385702001797},
}

\bib{Exel:Partial_dynamical}{book}{
  author={Exel, Ruy},
  title={Partial dynamical systems, Fell bundles and applications},
  series={Mathematical Surveys and Monographs},
  volume={224},
  date={2017},
  pages={321},
  isbn={978-1-4704-3785-5},
  isbn={978-1-4704-4236-1},
  publisher={Amer. Math. Soc.},
  place={Providence, RI},
}
\bib{Fow-Rae1999}{article}{
   author={Fowler, Neal J.},
   author={Raeburn, Iain},
   title={The Toeplitz algebra of a Hilbert bimodule},
   journal={Indiana Univ. Math. J.},
   volume={48},
   date={1999},
   number={1},
   pages={155--181},
   issn={0022-2518},
   review={\MR{1722197}},
   doi={10.1512/iumj.1999.48.1639},
}
\bib{Fowler:Product_systems}{article}{
  author={Fowler, Neal J.},
  title={Discrete product systems of Hilbert bimodules},
  journal={Pacific J. Math.},
  volume={204},
  date={2002},
  number={2},
  pages={335--375},
  issn={0030-8730},
  review={\MRref {1907896}{2003g:46070}},
  doi={10.2140/pjm.2002.204.335},
}

\bib{G2002}{article}{
   author={Guentner, Erik},
   title={Exactness of the one relator groups},
   journal={Proc. Amer. Math. Soc.},
   volume={130},
   date={2002},
   number={4},
   pages={1087--1093},
   issn={0002-9939},
   review={\MR{1873783}},
   doi={10.1090/S0002-9939-01-06195-0},
}

\bib{aHRT}{article}{
   author={an Huef, Astrid},
   author={Raeburn, Iain},
   author={Tolich, Ilija},
   title={HNN extensions of quasi-lattice ordered groups and their operator
   algebras},
   journal={Doc. Math.},
   volume={23},
   date={2018},
   pages={327--351},
   issn={1431-0635},
   review={\MR{3846056}},
}

\bib{ABCD}{article}{
  author={an Huef, Astrid},
  author={Nucinkis, Brita},
author={Sehnem, Camila F.},
  author={Yang, Dilian},
journal= {J. Funct. Anal.},
title={Nuclearity for semigroup~$\Cst$-algebras},
year={2021},
number = {2},
       doi = {10.1016/j.jfa.2020.108793},
volume = {280},
  review={\MRref{4159272}{}}
}

\bib{KKLL2023}{article}{
   author={Kakariadis, Evgenios T. A.},
   author={Katsoulis, Elias G.},
   author={Laca, Marcelo},
   author={Li, Xin},
   title={Couniversality and controlled maps on product systems over right
   LCM semigroups},
   journal={Anal. PDE},
   volume={16},
   date={2023},
   number={6},
   pages={1433--1483},
   issn={2157-5045},
   review={\MR{4632378}},
   doi={10.2140/apde.2023.16.1433},
}

\bib{KaPa2024}{misc}{
  title = {On Fock covariance for product systems and the reduced Hao-Ng isomorphism problem by discrete actions},
  author = {Kakariadis, Evgenios},
   author = {Paraskevas, Ioannis},
  eprint = {arXiv:2403.19764},
}
  
\bib{Katsura2004}{article}{
   author={Katsura, Takeshi},
   title={On $C^*$-algebras associated with $C^*$-correspondences},
   journal={J. Funct. Anal.},
   volume={217},
   date={2004},
   number={2},
   pages={366--401},
   issn={0022-1236},
   review={\MR{2102572}},
   doi={10.1016/j.jfa.2004.03.010},
}
\bib{Laca2000}{article}{
   author={Laca, Marcelo},
   title={From endomorphisms to automorphisms and back: dilations and full
   corners},
   journal={J. London Math. Soc. (2)},
   volume={61},
   date={2000},
   number={3},
   pages={893--904},
   issn={0024-6107},
   review={\MR{1766113}},
   doi={10.1112/S0024610799008492},
}

\bib{LACA1996415}{article}{
title = {Semigroup Crossed Products and the Toeplitz Algebras of Nonabelian Groups},
journal = {Journal of Functional Analysis},
volume = {139},
number = {2},
pages ={415--440},
year = {1996},
issn = {0022-1236},
doi ={10.1006/jfan.1996.0091},
url = {http://www.sciencedirect.com/science/article/pii/S0022123696900919},
author={Laca, Marcelo},
   author={Raeburn, Iain},
   }
   	
\bib{LR:AdvMath2010}{article}{
author={Laca, Marcelo},
   author={Raeburn, Iain},
   title={Phase transition on the Toeplitz algebra of the affine semigroup
   over the natural numbers},
   journal={Adv. Math.},
   volume={225},
   date={2010},
   number={2},
   pages={643--688},
   issn={0001-8708},
   review={\MR{2671177}},
   doi={10.1016/j.aim.2010.03.007},
}

\bib{laca-sehnem}{article}{
   author={Laca, Marcelo},
   author={Sehnem, Camila},
   title={Toeplitz algebras of semigroups},
   journal={Trans. Amer. Math. Soc.},
   volume={375},
   date={2022},
   number={10},
   pages={7443--7507},
   issn={0002-9947},
   review={\MR{4491431}},
   doi={10.1090/tran/8743},
}

\bib{Lan}{book}{
  title={Hilbert $\mathrm{C}^*$-modules. A toolkit for operator algebraists},
  author={Lance, Christopher},
  isbn={0-521-47910-X},
  year={1995},
  publisher={Cambridge University Press}
  }

\bib{Lar-R2007}{article}{
   author={Larsen, Nadia S.},
   author={Raeburn, Iain},
   title={Projective multi-resolution analyses arising from direct limits of
   Hilbert modules},
   journal={Math. Scand.},
   volume={100},
   date={2007},
   number={2},
   pages={317--360},
   issn={0025-5521},
   review={\MR{2339372}},
   doi={10.7146/math.scand.a-15026},
}
		
\bib{LarETDS2010}{article}{
    author={Larsen, Nadia S.},
   title={Crossed products by abelian semigroups via transfer operators},
   journal={Ergodic Theory Dynam. Systems},
   volume={30},
   date={2010},
   number={4},
   pages={1147--1164},
   issn={0143-3857},
   review={\MR{2669415}},
   doi={10.1017/S0143385709000509},
}

\bib{Li:Semigroup_amenability}{article}{
  author={Li, Xin},
  title={Semigroup $\textup C^*$\nobreakdash-algebras and amenability of semigroups},
  journal={J. Funct. Anal.},
  volume={262},
  date={2012},
  number={10},
  pages={4302--4340},
  issn={0022-1236},
  review={\MRref{2900468}{}},
  doi={10.1016/j.jfa.2012.02.020},
}

\bib{murphy1990c}{book}{
  title={$\mathrm{C}^*$-algebras and Operator Theory},
  author={Murphy, Gerard J.},
  isbn={9780125113601},
  url={https://books.google.de/books?id=emNvQgAACAAJ},
  year={1990},
  publisher={Academic Press}
}

\bib{MR1386163}{article}{
   author={Murphy, Gerard J.},
   title={$C^\ast$-algebras generated by commuting isometries},
   journal={Rocky Mountain J. Math.},
   volume={26},
   date={1996},
   number={1},
   pages={237--267},
   issn={0035-7596},
   review={\MR{1386163}},
   doi={10.1216/rmjm/1181072114},
}
\bib{Murphy1996IEOT}{article}{
   author={Murphy, Gerard J.},
   title={Crossed products of $C^*$-algebras by endomorphisms},
   journal={Integral Equations Operator Theory},
   volume={24},
   date={1996},
   number={3},
   pages={298--319},
   issn={0378-620X},
   review={\MR{1375977}},
   doi={10.1007/BF01204603},
}

\bib{Nica:Wiener--hopf_operators}{article}{
  ISSN = {0379-4024},
 URL = {http://www.jstor.org/stable/24715075},
 author = {Nica, Alexandru},
 journal = {Journal of Operator Theory},
 number = {1},
 pages = {17--52},
 eprint={https://www.theta.ro/jot/archive/1992-027-001/1992-027-001-002.pdf},
 publisher = {Theta Foundation},
 title = {$C^*$\nobreakdash-algebras generated by isometries and Wiener--Hopf operators},
 review={\MRref{1241114}{46L35 (47B35 47C10)}},
 volume = {27},
 year = {1992}
}

\bib{PRJMAA2004}{article}{
   author={Packer, Judith A.},
   author={Rieffel, Marc A.},
   title={Projective multi-resolution analyses for $L^2({\RR}^2)$},
   journal={J. Fourier Anal. Appl.},
   volume={10},
   date={2004},
   number={5},
   pages={439--464},
   issn={1069-5869},
   review={\MR{2093911}},
   doi={10.1007/s00041-004-3065-y},
}
		
\bib{PeeblesMScThesis}{thesis}{
   author={Peebles, Jason Samuel},
   title={Toeplitz C*-algebra of the semigroup of principal ideals in a number field},
   note={Thesis (MSc)--University of Victoria, Canada, http://hdl.handle.net/1828/2380},
   date={2007},
   pages={38},
}

\bib{Pimsner:Generalizing_Cuntz-Krieger}{article}{
  author={Pimsner, Mihai V.},
  title={A class of $C^*$\nobreakdash -algebras generalizing both Cuntz--Krieger algebras and crossed products by~$\mathbf Z$},
  conference={ title={Free probability theory}, address={Waterloo, ON}, date={1995}, },
  book={ series={Fields Inst. Commun.}, volume={12}, publisher={Amer. Math. Soc.}, place={Providence, RI}, },
  date={1997},
  pages={189--212},
  review={\MRref {1426840}{97k:46069}},
}

\bib{SEHNEM2019558}{article}{
title ={On $\Cst$-algebras associated to product systems},
journal = {Journal of Functional Analysis},
volume = {277},
number = {2},
pages = {558 - 593},
year = {2019},
issn = {0022-1236},
doi = {10.1016/j.jfa.2018.10.012},
url = {http://www.sciencedirect.com/science/article/pii/S0022123618303823},
author = {Sehnem, Camila F.},}

\bib{SEHNEM2021}{article}{
author={Sehnem, Camila F.},
   title={${\rm C}^*$-envelopes of tensor algebras of product systems},
   journal={J. Funct. Anal.},
   volume={283},
   date={2022},
   number={12},
   pages={Paper No. 109707, 31},
   issn={0022-1236},
   review={\MR{4488124}},
   doi={10.1016/j.jfa.2022.109707}
}

\bib{Spielberg12}{article}{
   author={Spielberg, Jack},
   title={$C^\ast$-algebras for categories of paths associated to the
   Baumslag-Solitar groups},
   journal={J. Lond. Math. Soc. (2)},
   volume={86},
   date={2012},
   number={3},
   pages={728--754},
   issn={0024-6107},
   review={\MR{3000828}},
   doi={10.1112/jlms/jds025},
}

  \end{biblist}
\end{bibdiv}
\end{document}